\title{Sample Average Approximation for Stochastic Programming with Equality Constraints}
\author{Thomas Lew\thanks{Department of Aeronautics and Astronautics, Stanford University 
  (\email{thomas.lew@stanford.edu}, \email{pavone@stanford.edu}).}
\and Riccardo Bonalli\thanks{Laboratory of Signals and Systems, University of Paris-Saclay, CNRS, CentraleSup\'elec 
  (\email{riccardo.bonalli@l2s.centralesupelec.fr})}.
\and Marco Pavone\footnotemark[1]}
\crefname{hypothesis}{Hypothesis}{Hypotheses}
\def\namedlabel#1#2{\begingroup
    #2%
    \def\@currentlabel{#2}%
    \phantomsection\label{#1}\endgroup
}
\newcommand\A[1]{\hyperref[A#1]{(A#1)}\xspace}
\newcommand\Aonetilde{\hyperref[Aonetilde]{$\widehat{\text{(A1)}}$}\xspace}
\newcommand\Afour{\hyperref[A4a]{(A4)}\xspace}
\newcommand\Afoura{\hyperref[A4a]{(A4a)}\xspace}
\newcommand\Afourb{\hyperref[A4b]{(A4b)}\xspace}
\newcommand\ALipschitz{\hyperref[A5]{(A5)}\xspace}
\newcommand\Afourtilde{\hyperref[A4tilde]{$\widehat{\text{(A4)}}$}\xspace}
\newcommand\RemarkRelaxingNec{\hyperref[remark:relaxing_necessary]{2.1}\xspace}
\newcommand\RemarkAtilde{\hyperref[remark:Atilde]{5.1}\xspace}
\def\Inf{\operatornamewithlimits{inf\vphantom{p}}}
\newcommand\rev[1]{\textcolor{black}{#1}}
\newcommand\revSecond[1]{#1} 
\newcommand\revThird[1]{#1} 
\newcommand\revFinal[1]{#1}
\newcommand\mydots{\hbox to 1em{.\hss.\hss.}}
\newcommand{\dd}{\textrm{d}} 
\newcommand{\dt}{\textrm{d}t}
\newcommand{\cost}{\ell}
\newcommand{\N}{\mathbb{N}}
\newcommand{\Prob}{\mathbb{P}}
\newcommand{\bProb}{\bar{\mathbb{P}}}
\newcommand{\bomega}{\bar\omega}
\newcommand{\bOmega}{\bar\Omega}
\newcommand{\K}{\mathcal{K}}
\newcommand{\G}{\mathcal{G}}
\newcommand{\B}{\mathcal{B}}
\newcommand{\U}{\mathcal{U}}
\newcommand{\E}{\mathbb{E}}
\newcommand{\R}{\mathbb{R}}
\newcommand{\prob}{\textbf{P}\xspace}
\newcommand{\sprob}{\textbf{SP}\xspace}
\newcommand{\ocp}{\textbf{OCP}\xspace}
\newcommand{\socp}{\textbf{SOCP}\xspace}
\newcommand{\cH}{\mathcal{H}}
\newcommand{\F}{\mathcal{F}}
\newcommand{\dHaus}{d_{\textrm{H}}}
\newcommand{\bD}{\mathbb{D}}
\begin{document}

\maketitle

\vspace{3mm}
\begin{abstract}
We revisit the sample average approximation (SAA) approach for non-convex stochastic programming. We show that applying the SAA approach to problems with expected value equality constraints does not necessarily result in asymptotic optimality guarantees as the \revThird{sample size} increases. To address this issue, we relax the equality constraints. Then, we prove the asymptotic optimality of the modified SAA approach under mild smoothness and boundedness conditions on the equality constraint functions. Our analysis uses random set theory and concentration inequalities to characterize the approximation error from the sampling procedure. We apply our approach \revSecond{and analysis} to the problem of stochastic optimal control for nonlinear dynamical systems \revSecond{under} external disturbances modeled by a Wiener process. \revSecond{Numerical results on relevant stochastic programs show the reliability of the proposed approach. Results on} a rocket-powered descent problem show that our computed solutions allow for significant uncertainty reduction \revSecond{compared to a deterministic baseline}.
\end{abstract}

%
%\begin{keywords}
%stochastic programming, 
%sampling methods, 
%concentration inequalities, 
%random set theory,  
%stochastic optimal control
%\end{keywords}
%
%%
%\begin{MSCcodes} %
%90C15, %
%65C05, %
%62F99, 
%60D05, %
%93E20 %
%\end{MSCcodes}

\vspace{3mm}

\section{Introduction}
Sampling-based algorithms for stochastic programming have been successfully applied to a plethora of applications such as portfolio optimization \cite{Pagnoncelli2009}, resource management \cite{Kleywegt2002}, 
and controller design \cite{Campi2009}, among many others \cite{Homem2014}. 
In particular, the sample average approximation (SAA) is a popular approach that 
 consists of replacing the expected value of cost and constraints functions of the original \revThird{stochastic program} with empirical averages from \revThird{a random sample}. % 
The resulting deterministic problem is then numerically solved via standard optimization techniques. 
The SAA approach is applicable to general non-convex stochastic programs, including with chance constraints \cite{Pagnoncelli2009} and risk measures \cite{Rockafellar2000}.

Many works have investigated the asymptotic convergence properties of the SAA approach 
\cite{Wang2008,Pagnoncelli2009,Shapiro2014,Bonnans2019}. The most fundamental property is consistency: As the \revThird{sample size} increases, solutions \revThird{to} the approximated problems should solve the original stochastic program. 
This consistency result is key to justifying the use of the SAA.  
However, these previous works only consider expected value inequality constraints.  
Considering equality constraints is crucial in many applications. For instance, stochastic optimal control problems in robotics and aerospace often include an expected value equality constraint on the final state of the system \cite{blackmore2011chance,BonalliLewESAIM2021}. 

\revSecond{Existing consistency results for the SAA approach \cite{Wang2008,Pagnoncelli2009,Shapiro2014,Bonnans2019} assume that by slightly perturbing optimal solutions, one can find feasible solutions that strictly satisfy inequality constraints. 
% can be made strictly feasible. 
%are strictly feasible at solutions close to optimal solutions. 
% Although this assumption is mild for inequality-constrained stochastic programs, 
Thus, these consistency analyses
do not straightforwardly apply to %cannot be straightforwardly used to conclude the consistency of the SAA approach for % be used to obtain consistency results when applying the SAA approach 
equality-constrained programs. 
Indeed, applying the standard SAA approach to stochastic programs with expected value equality constraint may result in infeasibility for arbitrarily-large sample sizes, see Remark \RemarkRelaxingNec.} 
% By assuming that inequality constraints are strictly feasible, the results in \cite{Wang2008,Pagnoncelli2009,Shapiro2014,Bonnans2019} do not apply to stochastic programs with expected value equality constraints, see Remarks \RemarkRelaxingNec and \RemarkAtilde.
Problems with equality constraints are tackled in \cite{KrklecJerinki2019} by combining the SAA with the \rev{quadratic} penalty method. \rev{
% In contrast, we study the asymptotic convergence of solutions to the sample average approximations to solutions to the original stochastic program. 
% The resulting approximated programs can then be solved %any Separating the stochastic program approximation from the optimization algorithm allows 
% using any %preferred 
% optimization algorithm, % to solve the sample average approximations,
% such as the augmented Lagrangian method that tends to out-perform the penalty method.} \rev{For deterministic programs, the quadratic penalty method may suffer from ill-conditioning for large penalizations and is typically less efficient than augmented Lagrangian methods. 
% However, it remains unclear if 
This raises the question of whether  the SAA approach can be combined with other optimization schemes (such as interior point methods, sequential quadratic programming, or the augmented Lagrangian method that typically out-performs the penalty method) and still yield asymptotic optimality guarantees as the sample size increases.}
% To summarize,} %have limitations (including \dots), which begs the question of if we can get better numerical solvers to solve stochastic programs with the SAA + other so,lvers, e.g., using the augmented lagrangian method. restricting the range of numerical solvers to the penalty method is suboptimal, and the penalty method is necessarily the best and has problems } However, 
% the theoretical convergence properties of the SAA approach with strict equality constraints enforcement remain poorly understood. 
% 
% 
% 
% 

\newpage
In this work, we show that the SAA approach can be applied to % 
stochastic programs with both expected value equality and inequality constraints and yield asymptotic optimality guarantees under mild assumptions. 
The key insight consists of relaxing the empirical equality constraints. 
By considering equality constraints, our analysis generalizes previous results in the stochastic programming literature  \cite{Wang2008,Pagnoncelli2009,Shapiro2014,Bonnans2019} and applies to pathological stochastic programs with inequality constraints, see Remark \RemarkAtilde. 
% \red{In contrast to previous works that assume the existence of strictly feasible solutions close to optimal solutions,} w
We rely on mild smoothness and boundedness assumptions on the functions defining the problem that can be verified prior to running the SAA approach.

Our analysis relies on random set theory and concentration inequalities. Specifically, we prove the almost-sure convergence of compact random level sets under a high-probability uniform error bound of the level set functions (Lemma \ref{lemma:ULLN_convCN}). Also, we derive a concentration inequality (Proposition \ref{prop:concent:F_alphaHolder}) that applies to classes of functions that are only H\"older continuous \A{2}, in contrast to prior work that assumes the Lipschitz continuity of the function class \cite{Bartlett2003,Koltchinskii2006,ShalevShwartz2009,wainwright_2019}. We believe these two intermediate results are of independent interest and will find other applications.

We apply the approach to the problem of stochastic optimal control for % 
nonlinear systems characterized by a stochastic differential equation. Solving the general nonconvex formulation remains challenging, see  \cite{Peng1990,berret2020,BonalliLewESAIM2021,BonalliBonnet2022}. 
Hamilton-Jacobi-Bellman approaches \cite{Kushner2001} optimize over closed-loop policies and thus suffer from the curse of dimensionality due to the discretization of the state space.  % 
In contrast, we optimize over open-loop control trajectories which allows for efficient numerical resolution. 
This approach is common in the literature \cite{blackmore2011chance,berret2020,BonalliLewESAIM2021} and allows for feedback by recursively solving the open-loop problem over time \cite{Mesbah2016}. % 
We verify the approach on a Mars rocket-powered descent control problem and obtain solutions that minimize both the fuel consumption and the variance of the final state error.  
The paper is organized as follows. In \textbf{Section \ref{sec:problem_formulation}}, we define the stochastic programming problem with expectation equality constraints,  propose a sampling-based approximation of the problem, and state our assumptions and main asymptotic optimality result (Theorem \ref{thm:main:h_lipschitz_bounded}). 
In \textbf{Section \ref{sec:proof_main}}, 
we prove Theorem  \ref{thm:main:h_lipschitz_bounded}. The analysis requires two intermediate steps. 
    First, in \textbf{Section \ref{sec:reformulation:rand_sets}}, we reformulate the problem and the sampling-based approximation using random compact sets. With this reformulation, we derive asymptotic optimality guarantees of the approach under convergence assumptions on the cost and feasible set approximations (Theorem \ref{thm:main:rand_sets}).  
Finally, we derive sufficient conditions for the convergence of the random level sets that define the feasible equality constraints sets (Lemma \ref{lemma:ULLN_convCN}).
    Second, in \textbf{Section \ref{sec:concentration}}, we derive a concentration inequality (Proposition \ref{prop:concent:unif_bounded}) 
that implies the satisfaction of a high-probability uniform error bound required in Lemma \ref{lemma:ULLN_convCN} under mild assumptions \hyperref[A2]{(A2-3)}. 
In \textbf{Section \ref{sec:inequality}}, we extend the analysis to problems with inequality constraints. 
In \textbf{Section \ref{sec:stochastic_control}},  
we apply the analysis to stochastic optimal control problems.
In \textbf{Section \ref{sec:numerical_results}}, we provide numerical results validating the proposed approach. 
We conclude in \textbf{Section \ref{sec:conclusions}} and discuss directions of future work.

\section{Problem formulation and main result}\label{sec:problem_formulation}
For clarity of exposition, we defer the analysis of problems with inequality constraints to Section \ref{sec:inequality}.   
Let $(\Omega,\G,\Prob)$ be a probability space, 
$n,d\in\N$, 
$\U\subset\R^d$ be a compact set, 
$f:\U\times\Omega\to\R$ and 
$h:\U\times\Omega\to\R^n$ be two Carath\'eodory functions\footnote{The cost function $f:\U\times\Omega\to\R$ is Carath\'eodory if $f(u,\cdot)$ is $\G$-measurable for all $u\in\U$ and $f(\cdot,\omega)$ is continuous $\Prob$-almost-surely, and similarly for the equality constraints function $h$.}. 
We consider the stochastic optimization problem
\begin{align*}
\prob: 
\ \inf_{u\in \U}
\ \E[f(u,\revThird{\omega})]
\ \ \ 
\text{s.t.}
\ \ 
\E[h(u,\revThird{\omega})]=0
\end{align*}
and assume that \prob is feasible. 

We consider the sample average approximation (SAA) that consists of replacing the expectations in \prob with approximations from \revThird{a sample of} independent and identically distributed (i.i.d.)\ \revThird{realizations} $\omega^i\in\Omega$. 
Let $(\bOmega,\bar\G,\bProb)$ denote the product probability space (where $\bOmega=(\Omega\times\Omega\times\dots)$, see Section \ref{sec:product_space_socp} for details) and $\bomega=(\omega^1,\dots)\in\bOmega$ denote a \revThird{sample}, such that each $\omega^i\in\Omega$ corresponds to an i.i.d.\ \revThird{realization} from $(\Omega,\G,\Prob)$.
Given a \revThird{sample size} $N\in\N$, we define the empirical estimates 
\begin{align}
\label{eq:fN_gN}
f_N:\U\times\bOmega&\to\R  
&&h_N:\U\times\bOmega\to\R^n 
\\
\nonumber
(u,\bomega)&\mapsto
\frac{1}{N}\sum_{i=1}^N f(u,\omega^i)
&&
\qquad\ 
(u,\bomega)\mapsto \frac{1}{N}\sum_{i=1}^N h(u,\omega^i).
\end{align}
Given a sequence of strictly positive scalars $(\delta_N)_{N\in\N}$, we consider the sampled problem
\begin{align*}
\sprob_N(\bomega): 
\ \inf_{u\in \U}
\ f_N(u,\bomega)
\ \ 
\text{s.t.}
\ \ 
\left\|h_N(u,\bomega)\right\|\leq\delta_N,
\end{align*}
\revSecond{where $\|\cdot\|$ denotes the Euclidean norm.}  
For any \revSecond{sample size} $N\in\N$ and \revSecond{sample} $\bomega\in\bOmega$, we denote by $(f_0^\star,S_0^\star)$ and by $(f_N^\star(\bomega),S_N^\star(\bomega))$ the optimal value and set of solutions to \prob and to $\sprob_N(\bomega)$, respectively. 

The first objective of this work is to provide reasonable sufficient assumptions on $f$ and $h$ that can be checked prior to applying the SAA approach and guarantee that $\bProb$-almost-surely, $\sprob_N(\bomega)$ is feasible for a \revThird{sample size $N$} large enough, and solutions to $\sprob_N(\bomega)$ converge to optimal solutions \revThird{to} $\prob$ as $N$ increases. 
Relaxing the constraints of the sampled problem \revThird{$\sprob_N(\bomega)$} 
is a critical algorithmic choice to yield \rev{these} guarantees.

\begin{wrapfigure}{R}{0.275\linewidth}
    \centering
\includegraphics[width=0.99\linewidth]{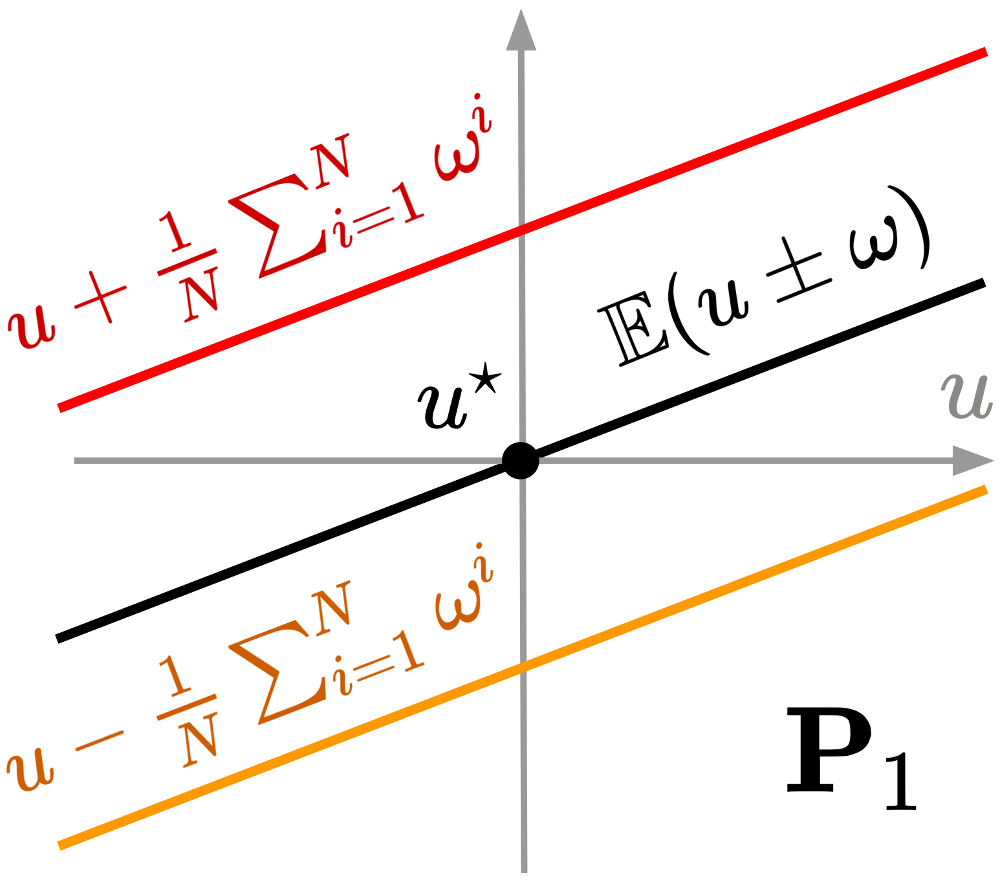}

\phantom{a}

\includegraphics[width=0.99\linewidth]{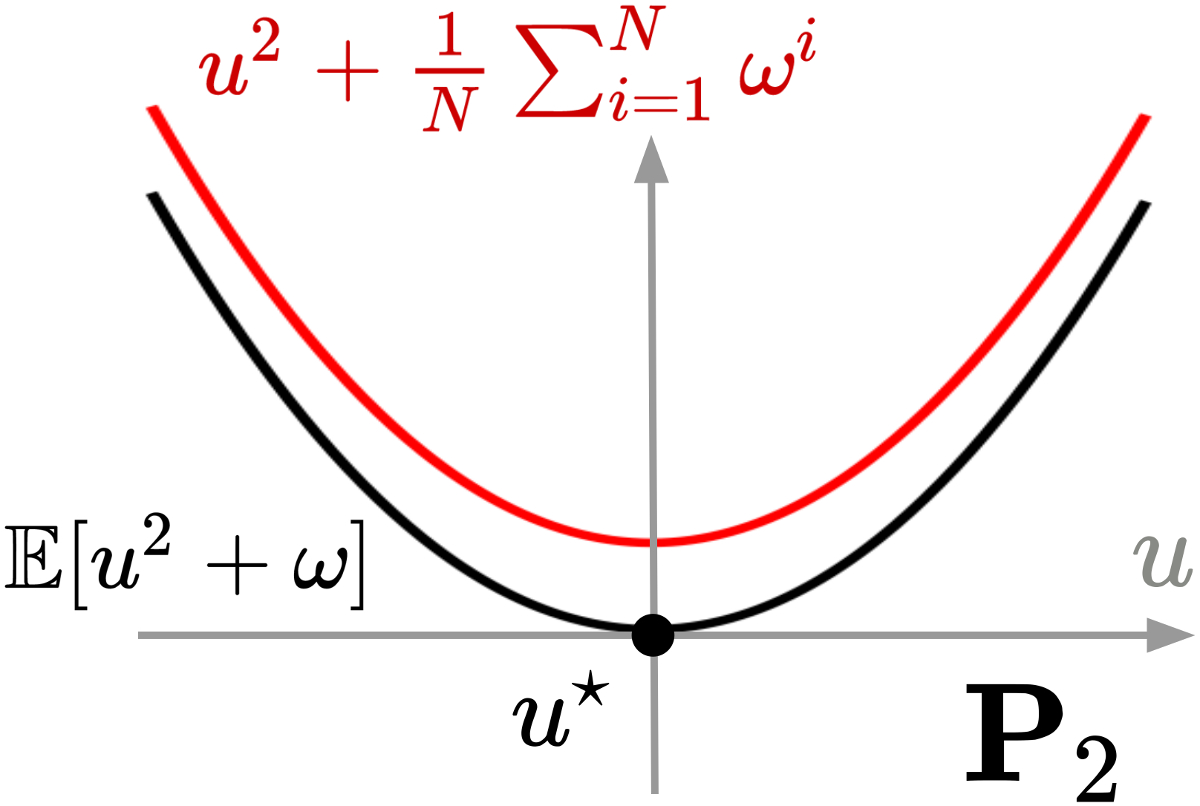}

\caption{\revThird{Two infeasible examples} for the standard SAA approach.
}% 

    \vspace{-20mm}
\label{fig:SAAinfeasible}
\end{wrapfigure}

\textit{Remark \namedlabel{remark:relaxing_necessary}{2.1}} \revThird{(Infeasibility of the standard SAA)} 
Directly applying the standard SAA approach to \prob without relaxing the equality constraints by $\delta_N$ does not necessarily yield almost-sure asymptotic feasibility and optimality guarantees\revThird{, as we show next. Let $\U=\Omega=[-1,1]$ %, $\G=\B(\Omega)$, 
\revSecond{and} $\Prob$ be the Lebesgue measure normalized to $\Omega$, so that $\omega$ is uniformly distributed over  $\Omega$.} Consider the two problems
\begin{align*}
&\revThird{\prob_1:\inf_{u\in\revThird{\U}}\,|u|% 
\ \ \ 
\text{s.t.}
\ \ 
\E[(u+\omega,u-\omega)]
=
(0,0),}
\\
&\revThird{\prob_2}:\inf_{u\in\revThird{\U}}\,|u|% 
\ \ \ 
\text{s.t.}
\ \ 
\E[u^2+\omega]=0,
\end{align*}
\revThird{where $\prob_1$ is convex and $\prob_2$ has only one constraint. The solution to $\prob_1$ and $\prob_2$ is $u^\star=0$.}  However, \revThird{the two sampled problems} with strict equality constraint enforcement
\begin{align*}
&\revThird{\inf_{u\in\revThird{\U}}\,|u|% 
\ \ \ 
\text{s.t.}
\ \ \left(\begin{matrix}
    u+\frac{1}{N}\sum_{i=1}^N\omega^i
    \\
    u-\frac{1}{N}\sum_{i=1}^N\omega^i
\end{matrix}\right)
=
\left(\begin{matrix}
    0\\0
\end{matrix}\right),}
\\
&\inf_{u\in\revThird{\U}}\,|u|% 
\ \ \ 
\text{s.t. }
\ \ u^2+\frac{1}{N}\sum_{i=1}^N \omega^i=0
\end{align*}
% \begin{align*}
% \rev{
% \inf_{u\in\U}\,|u|% 
% \ \ 
% \text{s.t.}
% \ \, 
% \left(\begin{matrix}
%     u+\frac{1}{N}\sum_{i=1}^N\omega^i
%     \\
%     u-\frac{1}{N}\sum_{i=1}^N\omega^i
% \end{matrix}\right)
% =
% \left(\begin{matrix}
%     0\\0
% \end{matrix}\right)
% %
% \quad\text{and}\quad\,
% %
% \inf_{u\in\U}\,|u|% 
% \ \ 
% \text{s.t.}
% \ \, 
% u^2+\frac{1}{N}\sum_{i=1}^N \omega^i=0
% }
% \end{align*}
\revThird{are infeasible with $\bProb$-probability $1$ and $\frac{1}{2}$, respectively, for all sample sizes $N\in\N$.}
The problem comes from the lack of constraint
qualification condition % 
\cite{Shapiro2014}, see also \cite[Condition A]{Anisimov2000} and the hypotheses in \cite[Section 5.2.5.2]{Bonnans2019} which do not hold in this example, see Figure \ref{fig:SAAinfeasible}. 
Instead of making such assumptions that could potentially be restrictive, in this work, 
we assume that the equality constraints functions $h$ are smooth and bounded (see \hyperref[A2]{(A2-3)}) and relax the equality  constraints. % 

By selecting the right dependency of $\delta_N$ on $N$, we show that the proposed stochastic programming approach yields asymptotic feasibility and optimality guarantees under % 
assumptions of integrability \revThird{and smoothness} on $f$ and of smoothness and boundedness on $h$.

\begin{description} 
\item[\namedlabel{A1}{(A1)}]
There exists a $\G$-measurable function $d:\Omega\to\R$ such that $\E[d(\revThird{\omega})]<\infty$ and $|f(u,\omega)|\leq d(\omega)$ for all $u\in\U$ $\Prob$-almost 
surely.
\item[\namedlabel{Aonetilde}{$\widehat{\text{(\revThird{A1})}}$}] 
\revThird{$\Prob$-almost-surely, the map $u\mapsto f(u,\omega)$ is $\alpha$-H\"older continuous for some exponent $\alpha\in(0,1]$ and H\"older constant $L(\omega)$ such that }
$$
\revThird{|f(u_1,\omega)-f(u_2,\omega)|\leq L(\omega)\|u_1-u_2\|^\alpha
\quad \text{for all }\ u_1,u_2\in\U.}
$$
\item[\namedlabel{A2}{(A2)}] 
$\Prob$-almost-surely, the map $u\mapsto h(u,\omega)$ is $\alpha$-H\"older continuous for some exponent $\alpha\in(0,1]$ and H\"older constant $M(\omega)$ satisfying $\E[M(\revThird{\omega})^2]<\infty$, such that 
$\|h(u_1,\omega)-h(u_2,\omega)\|\leq M(\omega)\|u_1-u_2\|^\alpha$ for all $u_1,u_2\in\U$. 
\item[\namedlabel{A3}{(A3)}] $\Prob$-almost-surely, $\sup_{u\in\U} \|h(u,\omega)\|\leq \bar{h}$ for some constant $0\leq\bar{h}<\infty$. 
\end{description} 

\hyperref[A1]{(\revThird{A1-3})} are standard assumptions that are mild and can be checked prior to applying the proposed sampling-based approach. \revThird{Assuming $\alpha$-H\"older continuity for $\alpha<1$ in \Aonetilde and \A{2} is weaker than assuming that the maps $u\mapsto (f,g)(u,\omega)$ are Lipschitz continuous, as is typically done in the literature \cite{Bartlett2003,Koltchinskii2006,ShalevShwartz2009,wainwright_2019,Hu2020siam}. Making these weaker assumptions is necessary to apply our analysis to optimal control problems for dynamical systems described with a stochastic differential equation, see Section \ref{sec:stochastic_control} and \eqref{eq:xu_lip}.} \revThird{By multiplying $h$ with a smooth cutoff function, \A{3} is always satisfied.} % 
We \revThird{further discuss assumptions \hyperref[A1]{(\revThird{A1-3})}} \revThird{(see Remark \ref{remark:unif_Markov}) }% 
in the next section. 
Below, we state the main result of this paper. 

\begin{theorem}[Asymptotic optimality of the SAA approach for problems with expectation equality constraints]\label{thm:main:h_lipschitz_bounded}
Assume that the functions $f$ and $h$ satisfy \A{1},\A{2}, and \A{3}. 
Given any constants $\epsilon\in(0,\frac{1}{2})$ and $C>0$, define the sequence $(\delta_N)_{N\in\N}$ as 
\begin{equation}\label{eq:delta_N}
\delta_N=CN^{-(\frac{1}{2}-\epsilon)}.
\end{equation}
Then, $\bProb$-almost-surely, there exists a subsequence $\{N_k(\bar\omega)\}_{k\in\N}$ such that 
\begin{equation*}
f_{N_k(\bar\omega)}^\star(\bar\omega)\rightarrow f_0^\star
\quad
\text{and}
\quad
\mathbb{D}(S^\star_{N_k(\bar\omega)}(\bar\omega),S_0^\star)\rightarrow 0
\quad\ \text{as} \quad 
k\rightarrow\infty
\end{equation*}
with the distance function $\bD$ between the solution sets $S^\star_N(\bar\omega)$ and $S_0^\star$ defined in \eqref{eq:bD}.  
\revThird{If in addition, % 
$f$ satisfies \Aonetilde, % 
then $f_N^\star(\bomega)\to f_0^\star$ and $\bD(S^\star_N(\bomega),S_0^\star)\to 0$ as $N\to\infty$ $\bProb$-almost-surely.}
\end{theorem}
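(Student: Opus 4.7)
The plan is to reduce Theorem \ref{thm:main:h_lipschitz_bounded} to the abstract asymptotic optimality result obtained via the random-set reformulation of Section \ref{sec:reformulation:rand_sets} (Theorem \ref{thm:main:rand_sets}), and then to establish its two hypotheses by combining the concentration inequality of Section \ref{sec:concentration} with a Borel--Cantelli argument calibrated to the rate $\delta_N = C N^{-(1/2-\epsilon)}$. Concretely, I would rewrite \prob and $\sprob_N(\bomega)$ as minimizing a (random) cost over a (random) compact set and verify: (i) the random feasible set $C_N(\bomega) := \{u \in \U : \|h_N(u,\bomega)\| \leq \delta_N\}$ converges $\bProb$-a.s.\ to the true feasible set $C := \{u \in \U : \E[h(u,\omega)] = 0\}$ in the sense required by Theorem \ref{thm:main:rand_sets}, and (ii) the empirical cost $f_N(\cdot,\bomega)$ converges to $\E[f(\cdot,\omega)]$ in a matching mode on $\U$.

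For (i), I would invoke Lemma \ref{lemma:ULLN_convCN} to reduce the random-set convergence to a high-probability uniform error bound on $\sup_{u \in \U} \|h_N(u,\bomega) - \E[h(u,\omega)]\|$. Under \A{2}--\A{3}, Proposition \ref{prop:concent:F_alphaHolder} (and the bounded specialization in Proposition \ref{prop:concent:unif_bounded}) delivers such a bound at rate $N^{-1/2}$, up to covering factors stemming from the $\alpha$-H\"older modulus. The key quantitative point is that $\delta_N = C N^{-(1/2-\epsilon)}$ strictly dominates $N^{-1/2}$ for every $\epsilon > 0$. Picking the confidence level in the concentration bound to decay polynomially in $N$, the resulting tail is of the form $\exp(-c N^{2\epsilon})$ and hence summable, so Borel--Cantelli yields almost surely, for all sufficiently large $N$, the sandwich
\[
\{u \in \U : \E[h(u,\omega)] = 0\} \ \subseteq\  C_N(\bomega) \ \subseteq\  \{u \in \U : \|\E[h(u,\omega)]\| \leq 2\delta_N\}.
\]
The first inclusion gives eventual feasibility of $\sprob_N(\bomega)$; the second, combined with $\delta_N \to 0$ and the H\"older regularity of $\E[h]$ inherited from \A{2}, delivers the convergence $C_N(\bomega) \to C$ required by Theorem \ref{thm:main:rand_sets}.

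For (ii), assumption \A{1} alone only provides a pointwise strong law for $f_N(u,\bomega)$ and no uniform control on $\U$; this is precisely why the first conclusion must be stated along a subsequence. I would use pointwise a.s.\ convergence on a countable dense subset of $\U$ together with the feasible-set convergence from step (i) to extract $N_k(\bomega)$ along which the hypotheses of Theorem \ref{thm:main:rand_sets} are met, yielding $f^\star_{N_k}(\bomega) \to f_0^\star$ and $\bD(S^\star_{N_k}(\bomega),S^\star_0) \to 0$. When $f$ further satisfies \Aonetilde, the same concentration argument applied to $f$ gives a summable uniform bound $\sup_u |f_N(u,\bomega) - \E[f(u,\omega)]| \to 0$ almost surely, so Borel--Cantelli obviates the extraction and the full sequence converges.

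The main obstacle is the concentration step itself: because \A{2} and \Aonetilde only posit $\alpha$-H\"older (rather than Lipschitz) continuity, standard Lipschitz--Rademacher bounds do not apply, and a dedicated chaining/covering argument is needed to obtain a tail inequality whose rate is close enough to $N^{-1/2}$ to be dominated by $\delta_N$ for every $\epsilon \in (0,1/2)$ and whose confidence exponent makes the bad-event probabilities summable in $N$. Proposition \ref{prop:concent:F_alphaHolder} is engineered to supply exactly this; once it is in hand, the remainder of the proof is a bookkeeping exercise of threading the uniform bound through Lemma \ref{lemma:ULLN_convCN} and then through Theorem \ref{thm:main:rand_sets}.
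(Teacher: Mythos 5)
Your overall architecture matches the paper's: reformulate \prob and $\sprob_N(\bomega)$ over random compact sets, verify the two hypotheses of Theorem \ref{thm:main:rand_sets} by feeding the concentration bound of Proposition \ref{prop:concent:unif_bounded} (with $\epsilon_N\sim N^{-1/2+\epsilon/2}$ and summable $\beta_N$) through Lemma \ref{lemma:ULLN_convCN} via Borel--Cantelli, and conclude. However, your treatment of the cost function contains a genuine error. You assert that \A{1} ``only provides a pointwise strong law'' and that this is why the conclusion is stated along a subsequence. Both halves are wrong. Since $f$ is Carath\'eodory, $\U$ is compact, and $f$ is dominated by an integrable envelope under \A{1}, the classical uniform law of large numbers (\cite[Theorem 7.48]{Shapiro2014}, \cite[Theorem 5.40]{Bonnans2019}) gives $\sup_{u\in\U}|f_N(u,\bomega)-f_0(u)|\to 0$ $\bProb$-a.s.; this uniform convergence is hypothesis \eqref{eq:converges:fN:unif} of Theorem \ref{thm:main:rand_sets} and is indispensable, because the proof of that theorem evaluates $f_{N_k}-f_0$ at the minimizers $u^\star_{N_k}(\bomega)$, which have no reason to lie in (or be approximable, absent equicontinuity, from) a prescribed countable dense subset. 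Your substitute---pointwise a.s.\ convergence on a dense subset plus a subsequence extraction---does not verify that hypothesis, so as written your step (ii) leaves a gap. The subsequence in the theorem's conclusion has a different origin: it is produced inside the proof of Theorem \ref{thm:main:rand_sets} by Bolzano--Weierstrass applied to the minimizers $u^\star_N(\bomega)$ in the compact set $\U$, not by any deficiency in the mode of convergence of $f_N$.

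Your route to the full-sequence conclusion under \Aonetilde also does not go through. You propose to ``apply the same concentration argument to $f$'' to get a summable uniform bound, but \Aonetilde assumes neither $\E[L(\omega)^2]<\infty$ nor a uniform bound on $f$, so neither Proposition \ref{prop:concent:F_alphaHolder} (which needs the second moment of the H\"older constant) nor Corollary \ref{cor:concent:unif_bounded} (which needs boundedness) applies to $f$; moreover, a sharper convergence mode for $f_N$ would not by itself remove the subsequence, for the reason above. In the paper, \Aonetilde is used quite differently (Part 3 of the proof of Theorem \ref{thm:main:rand_sets}): the equi-H\"older estimate $\inf_{C_N}f_N-\inf_{C_0}f_0\leq L\,\dHaus(C_N(\bomega),C_0)^\alpha+\sup_{u\in\U}|f_N(u,\bomega)-f_0(u)|$ combined with a contradiction argument on limit points of $S^\star_N(\bomega)$ yields $\bD(S^\star_N(\bomega),S^\star_0)\to 0$ for the whole sequence. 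The feasibility/constraint-set part of your proposal (the sandwich of $C_N(\bomega)$ between $C_0$ and an outer level set of $h_0$, handled rigorously by Lemma \ref{lemma:ULLN_convCN}) is consistent with the paper, modulo the calibration $\epsilon_N/\delta_N\to 0$ required by that lemma.
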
 

This result implies that $\bProb$-almost-surely, $\sprob_N$ is feasible for a \revThird{sample size} $N$ large enough and solutions to $\sprob_N$ converge \revThird{(up to a subsequence)} to solutions to $\prob$ as $N$ increases. \revThird{If in addition, the cost function $f$ is $\alpha$-H\"older continuous under \Aonetilde, then the entire sequence of solutions to $\sprob_N$ converges to solutions to $\prob$.}

The remainder of this paper consists of proving Theorem \ref{thm:main:h_lipschitz_bounded} (Section \ref{sec:proof_main}), generalizing it to problems with inequality constraints (Section \ref{sec:inequality}), applying the proposed approach to stochastic optimal control problems (Section \ref{sec:stochastic_control}), and numerically evaluating the method  (Section \ref{sec:numerical_results}). 

 \newpage
\section{Proof of the main asymptotic optimality result (Theorem \ref{thm:main:h_lipschitz_bounded})}
\label{sec:proof_main}
\subsection{Reformulation with random sets and convergence analysis}\label{sec:reformulation:rand_sets}
We first observe that the feasible set of $\sprob_N$ is a random compact set and reformulate $\prob$ and $\sprob_N$ accordingly. Then, we derive  convergence guarantees under sufficient conditions. 

\subsubsection{Reformulating $\prob$ and $\sprob_N$ with random sets}
We denote by $\K$ the family of compact subsets of $\R^d$, 
by $\B(\K)$ the $\sigma$-algebra generated by the myopic topology on $\K$ \cite{Matheron1975,Molchanov_BookTheoryOfRandomSets2017}, and define $\K'=\K\setminus\{\emptyset\}$. For any $N\in\N$, we define
\begin{align}\label{eq:f0_g0}
f_0:\U\to\R: \, u\mapsto\E[f(u,\revThird{\omega})],
\qquad
h_0:\U\to\R^n: \, u\mapsto\E[h(u,\revThird{\omega})],
\end{align}
$f_N,h_N$ in \eqref{eq:fN_gN}, 
and the constraints sets 
\begin{subequations}
\label{eq:C0_CN}
\begin{gather}
\label{eq:C0}
C_0=\{u\in\U:h_0(u)=0\}
\\
\label{eq:CN}
C_N:\bOmega\to\K: \, 
\bomega\mapsto\{u\in\U:
\|h_N(u,\bomega)\|\leq\delta_N\}.
\end{gather}
\end{subequations}
Since $h$ is Carath\'eodory, 
$\U\in\K$, and \prob is feasible, $C_0\in\K'$. Also, $C_N$ is a random compact set \cite{Matheron1975,Aubin1990,Molchanov_BookTheoryOfRandomSets2017}, i.e., a measurable map from $(\bOmega,\G)$ to $(\K,\B(\K))$, see Appendix \ref{apdx:random_sets}. We will prove under mild conditions that $C_N(\bomega)\neq\emptyset$ for $N$ large enough $\bProb$-almost-surely (i.e., that $\sprob_N(\bomega)$ is feasible), see Lemma \ref{lemma:ULLN_convCN} and Proposition \ref{prop:concent:unif_bounded}. 

Then, \prob and $\sprob_N(\bomega)$ can be written as
\begin{align*}
\prob
:
\inf_{u\in C_0}
f_0(u)
\quad\text{and}\quad
\sprob_N(\bomega):
\inf_{u\in C_N(\bomega)}
f_N(u,\bomega).
\end{align*}
\subsubsection{General asymptotic convergence analysis}
We use the previous reformulation to prove a general convergence result under sufficient appropriate convergence of the approximated cost function $f_N$ and of the random constraints set $C_N$. 
To describe the convergence of the solutions sets, for any $A,B\in\K'$, we define 
\begin{equation}\label{eq:bD}
\bD(A,B)=\sup_{x\in A}
\Inf_{y\in B}
\|x-y\|
\end{equation}
and $\mathbb{D}(A,B)=\infty$ if $A$ or $B$ is empty.  
Although $\mathbb{D}$ is not a metric, it satisfies the triangle inequality and the property that $\mathbb{D}(A,B)=0$ implies $A\subseteq B$. We will use this last property to analyze the solution sets of \prob and $\sprob_N$. 

We describe the convergence of the constraints sets $C_N(\bomega)$ using the Hausdorff metric $\dHaus:\K\times\K\to[0,\infty)$ defined for any $A,B\in\K$ as 
\begin{equation}\label{eq:dH}
\dHaus(A,B)
=
\max(
\bD(A,B), \,
\bD(B,A)
).
\end{equation}
When considering optimization problems, it is generally only possible to prove convergence with respect to $\bD$; it is generally not possible to prove that $\dHaus(S^\star_N(\bomega),S_0^\star)\to 0$ as $N\to\infty$, see e.g. \cite{Vogel2006}.  
Assuming the uniform convergence of the cost function and constraints sets, we prove that up to a subsequence,  $\bD(S^\star_N(\bomega),S_0^\star)\to 0$ as $N\to\infty$ $\bProb$-almost-surely. 
We represent the distance $\bD(S_N^\star(\bomega),S_0^\star)$ in Figure \ref{fig:D_metric} and its limit.

\begin{figure}[t]% 
  \centering
\includegraphics[width=0.3\linewidth]{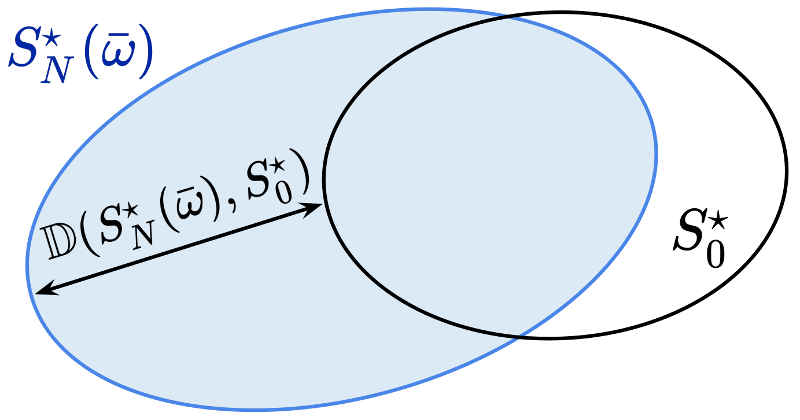}
\hspace{1cm}
\includegraphics[width=0.21\linewidth]{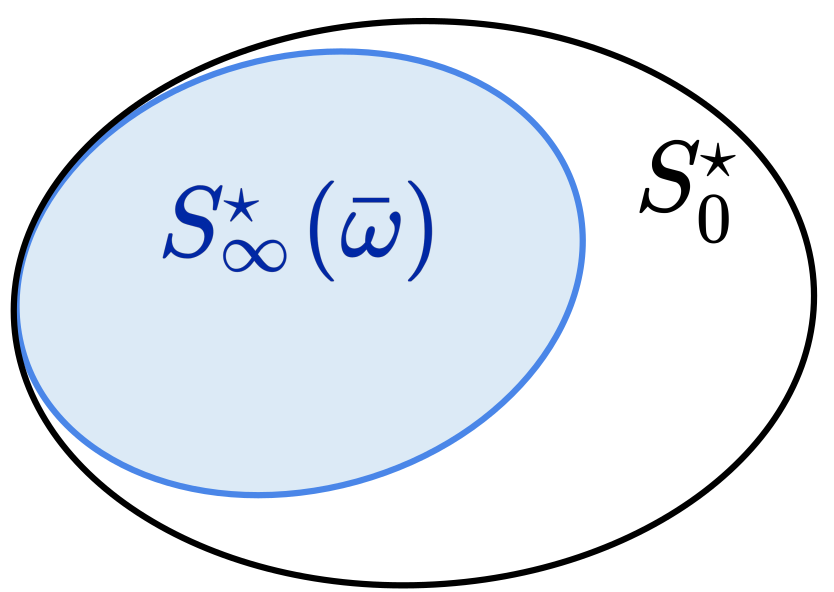}
  \caption{$\bD$-distance between the solution sets $S_0^\star$ and $S^\star_N(\bomega)$ to \prob and to $\sprob_N(\bomega)$, respectively. If $\bD(S^\star_N(\bomega),S_0^\star)\to 0$ as $N\to\infty$ as in Theorem \ref{thm:main:rand_sets}, then the limit of the solutions sets $S^\star_N(\bomega)$ is a subset of the solution set $S_0^\star$. Thus, in the limit as the \revThird{sample size} $N$ increases, solutions $u_N^\star(\bomega)\in S^\star_N(\bomega)$ to $\sprob_N(\bomega)$ are optimal solutions to \prob.}
\label{fig:D_metric}
\end{figure}

\begin{theorem}\label{thm:main:rand_sets}
Assume that the costs and constraints in $\prob$ and $\sprob_N$ satisfy 
\begin{enumerate}
\item \textit{(Uniform convergence of cost)} 
$\bProb$-almost-surely, 
\begin{equation}\label{eq:converges:fN:unif}
\sup_{u\in\U}|f_N(u,\bomega)-f_0(u)|\to 0\quad\text{as}\quad N\to\infty.
\end{equation}
\item \textit{(Myopic convergence of constraint set)} 
$\bProb$-almost-surely, 
\begin{equation}\label{eq:converges:CN}
\dHaus(C_N(\bomega),C_0)\to 0\quad\text{as}\quad N\to\infty.
\end{equation}
\end{enumerate}
Then, 
$\bProb$-almost-surely, there exists a subsequence $\{N_k(\bomega)\}_{k\in\N}$ such that 
\begin{subequations}
\label{eq:thm:conv_sol_sets}
\begin{align}
\label{eq:thm:conv_sol_sets:cost}
f_{N_k(\bomega)}^\star(\bomega)&\to f_0^\star
\ \ \,
\text{as} \quad 
k\rightarrow\infty,
\\
\label{eq:thm:conv_sol_sets:set}
\bD(S^\star_{N_k(\bomega)}(\bomega),S_0^\star)&\to 0
\quad\ 
\text{as} \quad 
k\rightarrow\infty.
\end{align}
\end{subequations}
\revThird{If in addition, % 
$f$ satisfies \Aonetilde, % 
then $f_N^\star(\bomega)\to f_0^\star$ and $\bD(S^\star_N(\bomega),S_0^\star)\to 0$ as $N\to\infty$ $\bProb$-almost-surely.}
\end{theorem}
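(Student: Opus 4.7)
I plan to fix $\bomega$ in the full-measure event $\bOmega_0\subseteq\bOmega$ where both \eqref{eq:converges:fN:unif} and \eqref{eq:converges:CN} hold and run the standard variational argument for constrained infima with perturbed feasible sets. By the Carath\'eodory hypothesis on $f$, each summand $f(\cdot,\omega^i)$ is continuous, so $f_N(\cdot,\bomega)$ is continuous on $\U$ and its uniform limit $f_0$ is continuous as well. Combined with $\dHaus(C_N(\bomega),C_0)\to 0$ and $C_0\in\K'$, this forces $C_N(\bomega)$ to be nonempty and compact for $N$ large enough, so both infima are attained and $S_0^\star$, $S_N^\star(\bomega)$ are nonempty compact subsets of $\U$.

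\textbf{Convergence of optimal values.} I would prove $f_N^\star(\bomega)\to f_0^\star$ by a two-sided sandwich. For $\limsup_N f_N^\star(\bomega)\le f_0^\star$, I pick $u^\star\in S_0^\star$, use the one-sided bound $\bD(C_0,C_N(\bomega))\to 0$ and compactness of $C_N(\bomega)$ to produce recoveries $u_N\in C_N(\bomega)$ with $u_N\to u^\star$, and combine the uniform convergence of $f_N$ with the continuity of $f_0$ to obtain $f_N(u_N,\bomega)\to f_0^\star$. For $\liminf_N f_N^\star(\bomega)\ge f_0^\star$, I pick minimizers $u_N^\star\in S_N^\star(\bomega)$; compactness of $\U$ produces a convergent sub-subsequence $u_{N_k}^\star\to u_\infty$ along any subsequence, the complementary one-sided bound $\bD(C_N(\bomega),C_0)\to 0$ together with closedness of $C_0$ places $u_\infty\in C_0$, and uniform convergence plus continuity of $f_0$ give $f_{N_k}^\star(\bomega)\to f_0(u_\infty)\ge f_0^\star$; the every-subsequence-has-a-good-sub-subsequence principle then delivers the required $\liminf$ bound.

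\textbf{Convergence of solution sets and the whole-sequence refinement.} For $\bD(S_N^\star(\bomega),S_0^\star)\to 0$, I would exploit that $u\mapsto\inf_{v\in S_0^\star}\|u-v\|$ is continuous and $S_N^\star(\bomega)$ is compact, so the sup defining $\bD$ is attained at some $u_N^\#\in S_N^\star(\bomega)$. Arguing by contradiction, if the convergence fails along a subsequence, I extract a cluster point $u_\infty^\#$ of $(u_{N_k}^\#)$ and rerun the value-convergence argument to conclude $u_\infty^\#\in C_0$ with $f_0(u_\infty^\#)=f_0^\star$, whence $u_\infty^\#\in S_0^\star$; then $\inf_{v\in S_0^\star}\|u_{N_k}^\#-v\|\le\|u_{N_k}^\#-u_\infty^\#\|\to 0$ contradicts the obstruction. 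The same extractions simultaneously identify the subsequence $\{N_k(\bomega)\}$ of the first claim and, by the same subsequence principle, promote both convergences to the whole sequence; the supplementary \Aonetilde hypothesis is recorded because it is what later guarantees the uniform-convergence hypothesis \eqref{eq:converges:fN:unif} along the full sample-size sequence in Theorem~\ref{thm:main:h_lipschitz_bounded}, rather than because it strengthens the conclusion of the present theorem. The only real subtlety is bookkeeping: the two one-sided Hausdorff bounds $\bD(C_0,C_N)\to 0$ (for recovery) and $\bD(C_N,C_0)\to 0$ (for feasibility of limits) must be kept carefully apart, and no continuity of $h$ beyond what the Hausdorff hypothesis already encodes is invoked; the remaining steps are textbook variational limits.
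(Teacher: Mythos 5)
Your proof is correct and rests on the same variational skeleton as the paper's: minimizers of the sampled problems are extracted to a convergent subsequence whose limit lies in $C_0$ (using the one-sided bound $\bD(C_N(\bomega),C_0)\to 0$), a recovery sequence approaching $u^\star\in S_0^\star$ is built from the complementary bound $\bD(C_0,C_N(\bomega))\to 0$, the optimal values are sandwiched, and the solution-set convergence follows by contradiction. The genuine difference is in the final bookkeeping. The paper establishes the $\liminf$ bound only along a single extracted subsequence, states the conclusion of Parts 1--2 as subsequence convergence, and then invokes \Aonetilde in a separate Part 3, where the equi-H\"older estimate $\inf_{u\in C_N}f_N(u)-f_N(u^\star)\le L\,\dHaus(C_N(\bomega),C_0)^\alpha$ yields $\limsup_N f_N^\star(\bomega)\le f_0^\star$ for the whole sequence. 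You instead apply the every-subsequence-has-a-convergent-sub-subsequence principle, which upgrades both the value and the solution-set convergence to the full sequence using only \eqref{eq:converges:fN:unif} and \eqref{eq:converges:CN}; this is valid (the $\limsup$ bound already holds along the whole sequence from the recovery sequence alone, so \Aonetilde buys nothing for the conclusion of this theorem), and your treatment of the set convergence via the sup-attaining element of $S_N^\star(\bomega)$ is in fact more careful than the paper's Part 2, which argues about a fixed selection of minimizers. One factual correction: your closing remark that \Aonetilde ``is recorded because it guarantees \eqref{eq:converges:fN:unif}'' misattributes its role --- in the paper, \eqref{eq:converges:fN:unif} is obtained from \A{1} alone via the uniform law of large numbers, and \Aonetilde is used precisely (and, by your argument, superfluously) inside Part 3 of this proof to pass from subsequence to full-sequence convergence.
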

The proof is provided in Appendix \ref{apdx:proof_thm:main:rand_sets}. 
It is inspired from \cite{Shapiro2014} but follows from different assumptions on the constraints sets. Specifically, in \eqref{eq:converges:CN}, we make a general assumption on the convergence of the constraints sets that allows considering equality constraints. By assuming that $\dHaus(C_N(\bomega),C_0)\to 0$ as $N\to\infty$ $\bProb$-almost-surely, Theorem \ref{thm:main:rand_sets} assumes that the sampled problems $\sprob_N$ are always feasible for a \revThird{sample size $N$} large enough.

The uniform convergence of $f_N$ to $f_0$ in \eqref{eq:converges:fN:unif} holds under \A{1}, see e.g. \cite[Theorem 5.40]{Bonnans2019} and \cite[Theorem 7.48]{Shapiro2014}. \A{1} \revThird{and \Aonetilde are} standard assumption\revThird{s} that \revThird{are} often satisfied in practice, see Section \ref{sec:stochastic_control}.  The difficulty consists of providing sufficient conditions for the $\bProb$-almost-sure convergence of the random feasible level sets in \eqref{eq:converges:CN}. We derive such conditions in the next section. % 

\subsubsection{Convergence of random level sets} The feasible sets of $\sprob_N$ are random compact sets defined as the zero level sets of the empirical estimates $h_N$. Next, we derive sufficient conditions for the $\bProb$-almost-sure  convergence of these sets. % 

\begin{lemma}[Almost-sure convergence of random level sets] \label{lemma:ULLN_convCN}
Let  $(\epsilon_N)_{N\in\N}$ and $(\revThird{\beta_N})_{N\in\N}$ be two sequences such that 
$\epsilon_N\to 0$ as $N\to\infty$,  
$\sum_{N=1}^\infty\revThird{\beta_N}<\infty$, and
\begin{equation}
\label{eq:unif_gN_epsN_alphaN}
\bProb\left(
\sup_{u\in\U}\|h_N(u,\bomega)-h_0(u)\|\leq \epsilon_N
\right)
\geq 1-\revThird{\beta_N}
\quad
\text{for all }N\in\N.
\end{equation}
Let $(\delta_N)_{N\in\N}$ be a sequence of scalars such that
$$
\delta_N>0\ \text{for all $N\in\N$},
\qquad 
\delta_N\to 0\ \text{as $N\to\infty$},
\qquad 
\frac{\epsilon_N}{\delta_N} \to 0\ \text{as $N\to\infty$}.
$$ 
Assume that there exists $u^\star\in\U$ such that $h_0(u^\star)=0$. 
Define the compact set $C_0\subseteq\U$ and the sequence of random compact sets $C_N:\ \bOmega\to\K$ as in \eqref{eq:C0_CN}. 

Then, $\bProb$-almost-surely, $C_N(\bomega)\neq\emptyset$ for all $N\geq N(\bomega)$ with $N(\bomega)$ large enough and 
\begin{equation}\label{eq:lemma:ULLN_convCN}
\dHaus(C_N(\bomega),C_0)\to 0\ \text{as}\  N\to\infty.
\end{equation}
\end{lemma}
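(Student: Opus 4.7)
The plan is to pass from the probabilistic uniform bound to a deterministic almost-sure uniform bound via Borel–Cantelli, and then to establish each of the two inequalities $\bD(C_0,C_N(\bomega))\to 0$ and $\bD(C_N(\bomega),C_0)\to 0$ separately. Since $\sum_N\beta_N<\infty$, the Borel–Cantelli lemma gives a full $\bProb$-measure set of samples $\bomega$ on which
$\sup_{u\in\U}\|h_N(u,\bomega)-h_0(u)\|\leq\epsilon_N$ for all $N$ larger than some $N_0(\bomega)$. Fix such a $\bomega$ from now on.

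First I would handle the ``easy'' containment. Let $u\in C_0$, so that $h_0(u)=0$. The uniform bound combined with the hypothesis $\epsilon_N/\delta_N\to 0$ gives $\|h_N(u,\bomega)\|\leq\epsilon_N\leq\delta_N$ for all $N$ large enough, hence $u\in C_N(\bomega)$. Specializing to the given $u^\star$ shows that $C_N(\bomega)$ is nonempty eventually, and more generally $C_0\subseteq C_N(\bomega)$ for $N$ large, which immediately gives $\bD(C_0,C_N(\bomega))=0$ eventually. The remaining task is to show $\bD(C_N(\bomega),C_0)\to 0$, i.e.\ that every point of $C_N(\bomega)$ is close to $C_0$; this is the main obstacle.

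For that step I would first record that the limit map $h_0$ is continuous on $\U$. Indeed, under \A{2} we have, by Jensen's inequality, $\|h_0(u_1)-h_0(u_2)\|\leq\E[M(\omega)]\|u_1-u_2\|^\alpha$, and $\E[M(\omega)]<\infty$ because $\E[M(\omega)^2]<\infty$ (and by \A{3} the integrand is dominated, justifying the interchange of expectation and difference). Hence $h_0$ is $\alpha$-H\"older continuous, and in particular the level set $C_0=h_0^{-1}(\{0\})\cap\U$ is closed, so the distance function $u\mapsto\mathrm{dist}(u,C_0)$ is continuous on the compact set $\U$.

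Finally I would argue by contradiction. Suppose $\bD(C_N(\bomega),C_0)\not\to 0$. Then there exist $\gamma>0$ and indices $N_k\uparrow\infty$ together with points $u_{N_k}\in C_{N_k}(\bomega)$ such that $\mathrm{dist}(u_{N_k},C_0)\geq\gamma$. By compactness of $\U$, we may extract a further subsequence (not relabeled) converging to some $u^*\in\U$. The definition of $C_{N_k}(\bomega)$ together with the uniform bound yields
\begin{equation*}
\|h_0(u_{N_k})\|\leq\|h_0(u_{N_k})-h_{N_k}(u_{N_k},\bomega)\|+\|h_{N_k}(u_{N_k},\bomega)\|\leq\epsilon_{N_k}+\delta_{N_k}\to 0.
\end{equation*}
Passing to the limit using the continuity of $h_0$ gives $h_0(u^*)=0$, hence $u^*\in C_0$. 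But continuity of $\mathrm{dist}(\cdot,C_0)$ forces $\mathrm{dist}(u^*,C_0)\geq\gamma>0$, a contradiction. Combined with the earlier inclusion, this proves $\dHaus(C_N(\bomega),C_0)\to 0$ and concludes the argument. The delicate part is the final contradiction step, where the choice $\epsilon_N/\delta_N\to 0$ is essential to simultaneously guarantee feasibility (so that $C_N(\bomega)$ is nonempty) and a vanishing constraint violation at the limit point.
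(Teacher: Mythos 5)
Your proof is correct, but it takes a genuinely different route from the paper's. The paper proves the lemma by verifying the two conditions of a convergence criterion for random compact sets in the myopic topology (\cite[Proposition 1.7.23]{Molchanov_BookTheoryOfRandomSets2017}, restated as Theorem \ref{thm:conv_randSets_detLim}): for each compact $K$ disjoint from $C_0$ it bounds $\inf_{u\in K}\|h_N(u,\bomega)\|$ from below using the uniform error bound and the fact that $\|h_0\|\geq 2\bar\epsilon$ on $K$, for each open $G$ meeting $C_0$ it shows $C_N(\bomega)\cap G\neq\emptyset$ with high probability, applies Borel--Cantelli separately in each case, and then converts myopic convergence into Hausdorff convergence via the equivalence of the two topologies on $\K'$. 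You instead apply Borel--Cantelli once to upgrade \eqref{eq:unif_gN_epsN_alphaN} to a deterministic eventual uniform bound, and then prove the two semi-distances directly: $\bD(C_0,C_N(\bomega))=0$ eventually via the inclusion $C_0\subseteq C_N(\bomega)$ (which also gives nonemptiness), and $\bD(C_N(\bomega),C_0)\to 0$ by a compactness-plus-contradiction argument using the continuity of $h_0$. Your route is more elementary and self-contained --- it bypasses the random-set machinery entirely --- at the cost of losing the connection to the random-set framework the paper deliberately sets up. One point worth flagging in both directions: your argument explicitly invokes the continuity of $h_0$ (which you correctly justify from \A{2} via Jensen, and which also follows from the Carath\'eodory property plus \A{3} by dominated convergence), whereas the lemma statement does not list this as a hypothesis; the paper's proof of its condition (C1) silently relies on the same continuity when it extracts $\bar\epsilon>0$ with $\|h_0(u)\|\geq 2\bar\epsilon$ on $K$, so your treatment is, if anything, the more careful of the two. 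A minor imprecision only: in your closing remark, the vanishing of the constraint violation at the limit point needs only $\epsilon_N+\delta_N\to 0$; the ratio condition $\epsilon_N/\delta_N\to 0$ is used solely for the inclusion $C_0\subseteq C_N(\bomega)$.
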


The proof is provided in Appendix \ref{apdx:random_sets} and uses a result that provides sufficient conditions for the myopic convergence of random compact sets \cite[Proposition 1.7.23]{Molchanov_BookTheoryOfRandomSets2017}. 
Thanks to Lemma \ref{lemma:ULLN_convCN}, the condition \eqref{eq:converges:CN} in Theorem \ref{thm:main:rand_sets} can be satisfied using a high-probability uniform bound \eqref{eq:unif_gN_epsN_alphaN} on the error $h_N(u,\bomega)-h_0(u)$.

\subsection{Proving \eqref{eq:unif_gN_epsN_alphaN} via concentration inequalities}\label{sec:concentration} 
In this section, we use concentration inequalities to derive the high-probability uniform error bound \eqref{eq:unif_gN_epsN_alphaN} for the sampling-based approximation. % 
We define the parametric function class
\begin{equation}\label{eq:fclass:H}
\cH\triangleq\{h(u,\cdot):\Omega\to\R^n, \ u\in\U\}.
\end{equation}
To derive bounds of the form of \eqref{eq:unif_gN_epsN_alphaN}, concentration inequalities exploit the structure of the considered function class. In this work, we make % 
the smoothness and boundedness assumptions \hyperref[A2]{(A2-3)}.  
Assuming $\alpha$-H\"older continuity for $\alpha<1$ in \A{2} is weaker than assuming that the map $u\mapsto h(u,\omega)$ is Lipschitz continuous, as is typically done in the literature \cite{Bartlett2003,Koltchinskii2006,ShalevShwartz2009,wainwright_2019}. Making this weaker assumption is necessary to apply our approach and analysis to stochastic optimal control problems for nonlinear dynamical systems described with a stochastic differential equation, see Section \ref{sec:stochastic_control}.  % 

\begin{remark}\label{remark:unif_Markov}
Markov's inequality and a standard symmetrization argument give
\begin{equation}\label{eq:Markov_and_RN}
\bar\Prob\left(
\sup_{u\in\U}\left\|\frac{1}{N}\sum_{i=1}^Nh(u,\omega^i)-\E[h(u,\revThird{\omega})]\right\|
\geq \epsilon
\right)
\leq
\frac{2}{\epsilon}
R_N(\cH)
\end{equation}
for any $\epsilon>0$ (see \cite[(4.17-4.18)]{wainwright_2019} and Appendix \ref{appdx:prop:concent:Fcentered_lip}), 
where $R_N(\cH)$ is defined in \eqref{eq:Rademacher} and denotes the Rademacher complexity of the function class $\cH$. 
Standard bounds for reasonable function classes give $R_N(\cH)\leq CN^{-\frac{1}{2}}$ for some constant $C$ \cite{ShalevShwartz2009,wainwright_2019}. However, combining such bounds with Markov's inequality does not provide a bound that is tight enough, i.e., a bound satisfying \eqref{eq:unif_gN_epsN_alphaN} with $\sum_{N=1}^\infty\revThird{\beta_N}<\infty$. 
Obtaining tighter bounds requires making stronger assumptions on the function class $\cH$. In this work, these stronger assumptions are % 
\A{2} and \A{3}. % 
\end{remark}

\subsubsection{Bounded differences}
McDiarmid's inequality \cite[Lemma 26.4]{ShalevShwartz2009}, also known as the bounded differences inequality \cite[Corollary 2.21]{wainwright_2019}, is a well-known result % 
that allows deriving tight concentration inequalities for bounded function classes. Bounds resulting from this result are tighter than naively combining Markov's inequality with a symmetrization argument, see Remark \ref{remark:unif_Markov}. The following result is standard, see \cite[(4.16)]{wainwright_2019}.
\begin{corollary}\label{cor:concent:unif_bounded}
Assuming \A{3}, 
for any $N\in\N$, $\delta>0$, and $\revThird{\beta_N^\delta}=\exp\big(-\frac{N\delta^2}{2\bar{h}^2}\big)$,
$$
\sup_{u\in\U}\Big\|\frac{1}{N}\sum_{i=1}^Nh(u,\omega^i)-\E[h(u,\revThird{\omega})]\Big\|
\geq 
\bar\E\Big[\sup_{u\in\U}\Big\|\frac{1}{N}\sum_{i=1}^Nh(u,\omega^i)-\E[h(u,\revThird{\omega})]\Big\|\Big]
+
\delta
$$
\revThird{with $\bProb$-probability less than $\revThird{\beta_N^\delta}$.} 
\end{corollary}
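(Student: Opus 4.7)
The proof plan is to apply the bounded differences inequality (McDiarmid's inequality) on the product probability space $(\bOmega,\bar\G,\bProb)$ to the random variable
$$
Z(\bomega) \triangleq \sup_{u\in\U}\Big\|\tfrac{1}{N}\sum_{i=1}^N h(u,\omega^i) - \E[h(u,\omega)]\Big\|,
$$
which is precisely the quantity appearing on the left-hand side of the claimed bound. McDiarmid's inequality will yield
$$
\bProb\big(Z - \bar\E[Z] \geq \delta\big) \leq \exp\!\Big(-\frac{2\delta^2}{\sum_{i=1}^N c_i^2}\Big)
$$
for any constants $c_i$ that bound the sensitivity of $Z$ to changes in a single coordinate, so the task reduces to identifying the correct $c_i$ and then computing the resulting exponent.

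The key step is the bounded differences estimate. Fix $i\in\{1,\ldots,N\}$ and let $\bomega$ and $\bomega'$ agree in every coordinate except the $i$-th, where they take values $\omega^i$ and $\tilde\omega^i$ respectively. Using that $x\mapsto\|x\|$ is $1$-Lipschitz and the elementary inequality $|\sup_u \phi(u) - \sup_u \psi(u)| \leq \sup_u |\phi(u)-\psi(u)|$, I would bound
$$
|Z(\bomega) - Z(\bomega')| \;\leq\; \sup_{u\in\U} \tfrac{1}{N}\|h(u,\omega^i) - h(u,\tilde\omega^i)\| \;\leq\; \tfrac{2\bar{h}}{N},
$$
where the last inequality invokes assumption \A{3} applied separately to the two terms in the triangle inequality. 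This identifies $c_i = 2\bar{h}/N$ and gives $\sum_{i=1}^N c_i^2 = 4\bar{h}^2/N$.

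Plugging into McDiarmid's inequality yields
$$
\bProb\big(Z \geq \bar\E[Z] + \delta\big) \leq \exp\!\Big(-\frac{2\delta^2 N}{4\bar{h}^2}\Big) = \exp\!\Big(-\frac{N\delta^2}{2\bar{h}^2}\Big) = \beta_N^\delta,
$$
which is exactly the claim. I do not anticipate a substantive obstacle here, as the argument is a textbook application of McDiarmid and is in fact invoked as standard in the literature cited immediately above the statement. The only delicate point worth a brief remark is the measurability of $Z$ on $(\bOmega,\bar\G,\bProb)$, which follows from $h$ being Carath\'eodory and $\U$ being a compact subset of $\R^d$: the supremum over $\U$ may be reduced to a supremum over a countable dense subset, so $Z$ is a measurable function of $(\omega^1,\dots,\omega^N)$ and the concentration statement is well-posed.
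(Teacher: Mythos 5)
Your proposal is correct and is precisely the standard bounded-differences argument that the paper invokes for this corollary (the paper gives no proof of its own, citing it as standard via Wainwright's (4.16), whose proof is exactly this McDiarmid computation with $c_i = 2\bar{h}/N$). The sensitivity bound, the resulting exponent $\exp(-N\delta^2/(2\bar{h}^2))$, and the measurability remark are all in order.
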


\subsubsection{Dudley's entropy integral}% 
\label{sec:concent:dudley_lip}
In this section, we bound the expectation term $\bar\E\sup_{u}\|\frac{1}{N}\sum_ih(u,\omega^i)-\E h(u,\revThird{\omega})\|$ in Corollary \ref{cor:concent:unif_bounded}. 
To do so, since the parameter set $\U$ is compact, we use Dudley's entropy integral (see \cite[Theorem 5.22]{wainwright_2019} and Theorem \ref{thm:dudley}).  % 
This concentration inequality exploits the tight tails of sub-Gaussian processes and covering arguments via chaining. 
We combine this result with the smoothness assumption \A{2} of the equality constraints functions $h(u,\cdot)\in\cH$ to obtain a tight concentration inequality for our problem setting. 

\begin{proposition}[Concentration for $\alpha$-H\"older continuous function classes]\label{prop:concent:F_alphaHolder}
Assume that $h$ satisfies \A{2} for some exponent $\alpha\in(0,1]$ and H\"older constant $M(\omega)$. Let 
$D\triangleq 2\sup_{u\in\U}\|u\|$, 
$C=32$,  
$u_0\in\U$, 
and 
$\Sigma_0$ denote the covariance matrix of $h(u_0,\cdot)$. % 
Then, % 
$$
\bar\E\bigg[\sup_{u\in\U}\bigg\|\frac{1}{N}\sum_{i=1}^Nh(u,\omega^i)-\E[h(u,\revThird{\omega})]\bigg\|\bigg]
\leq 
\frac{1}{\sqrt{N}}
\bigg(
\frac{8CD^{\frac{\alpha+1}{2}}d^{\frac{1}{2}}n^{\frac{3}{2}}\E[M^2]^{\frac{1}{2}}}{\alpha^{\frac{1}{2}}}
+
\text{Trace}\left(\Sigma_0\right)^{\frac{1}{2}}
\bigg).
$$
\end{proposition}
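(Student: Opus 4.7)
The plan is to combine symmetrization with Dudley's entropy integral, using the $\alpha$-Hölder hypothesis to bound the metric entropy of the function class $\cH$. First, by the triangle inequality at the reference point $u_0\in\U$, I split
\[
\bar\E\sup_{u\in\U}\|Z_N(u,\bomega)\|\leq\bar\E\|Z_N(u_0,\bomega)\|+\bar\E\sup_{u\in\U}\|Z_N(u,\bomega)-Z_N(u_0,\bomega)\|,
\]
where $Z_N(u,\bomega)=\frac{1}{N}\sum_{i=1}^N h(u,\omega^i)-\E[h(u,\omega)]$. Since $Z_N(u_0,\bomega)$ is a centered $\R^n$-valued random vector with covariance $\Sigma_0/N$, Jensen's inequality gives $\bar\E\|Z_N(u_0,\bomega)\|\leq\sqrt{\mathrm{Trace}(\Sigma_0)/N}$, which yields the $\mathrm{Trace}(\Sigma_0)^{1/2}/\sqrt{N}$ summand of the claimed bound.

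For the oscillation term, I reduce to $n$ scalar problems via a norm comparison such as $\|v\|_2\leq\sum_{k=1}^n|v_k|$ and apply the standard Rademacher symmetrization componentwise. Conditionally on $\omega^1,\ldots,\omega^N$, the symmetrized process $u\mapsto\tfrac{1}{N}\sum_{i=1}^N\epsilon_i h_k(u,\omega^i)$ is sub-Gaussian in the pseudometric $\rho(u,v)^2=\tfrac{1}{N^2}\sum_{i=1}^N(h_k(u,\omega^i)-h_k(v,\omega^i))^2$, which by \A{2} satisfies $\rho(u,v)\leq\tfrac{\|u-v\|_2^\alpha}{\sqrt{N}}\hat{M}_N^{1/2}$ with $\hat{M}_N=\tfrac{1}{N}\sum_{i=1}^N M(\omega^i)^2$. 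Dudley's entropy integral then bounds the conditional symmetrized supremum; combining the Euclidean covering estimate $N(r;\U,\|\cdot\|_2)\leq(3D/r)^d$ with the Hölder relation and the substitution $w=(\epsilon\sqrt{N}/\hat{M}_N^{1/2})^{1/\alpha}$ reduces the integral to
\[
\frac{\alpha\hat{M}_N^{1/2}\sqrt{d}}{\sqrt{N}}\int_0^D w^{\alpha-1}\sqrt{\log(3D/w)}\,dw.
\]

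The main obstacle is evaluating this weighted logarithmic integral tightly and uniformly in $\alpha\in(0,1]$ so as to extract the precise $D^{(\alpha+1)/2}\alpha^{-1/2}$ dependence in the stated bound; I plan to apply Cauchy--Schwarz against the weighted measure $w^{\alpha-1}dw$ on $[0,D]$ together with an elementary computation of $\int_0^D w^{\alpha-1}\log(3D/w)\,dw$, followed by a geometric interpolation between the powers of $D$ that arise. Finally, I take expectation over $\omega^1,\ldots,\omega^N$: by Jensen, $\bar\E\hat{M}_N^{1/2}\leq\E[M^2]^{1/2}$, so the random prefactor becomes $\sqrt{\E[M^2]/N}$. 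Summing over the $n$ vector components, absorbing the factor of $2$ from symmetrization and the universal constant from Dudley's inequality into $C$, and tracking the $n^{3/2}$ dependence arising from combining the vector-to-scalar norm comparison with the componentwise bounds yields the stated inequality.
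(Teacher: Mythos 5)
Your architecture matches the paper's proof step for step: the triangle-inequality split at $u_0$ with the $\mathrm{Trace}(\Sigma_0)^{1/2}/\sqrt{N}$ term handled by Jensen, the componentwise reduction via $\|v\|_2\leq\|v\|_1$, Rademacher symmetrization, conditional sub-Gaussianity of the symmetrized process with respect to the empirical H\"older pseudometric, Dudley's entropy integral, and a final Jensen step giving $\bar\E[\hat M_N^{1/2}]\leq\E[M^2]^{1/2}$. All of that is sound and is exactly how the paper proceeds (via the intermediate Proposition \ref{prop:concent:Fcentered_lip:alpha} for the centered class $\tilde\cH$).

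The one genuine gap is the step you yourself flag as the main obstacle: your change of variables pushes the entropy integral into the Euclidean scale, producing $\alpha\int_0^D w^{\alpha-1}\sqrt{\log(3D/w)}\,\dd w$, and Cauchy--Schwarz against $w^{\alpha-1}\dd w$ then yields (after computing $\int_0^D w^{\alpha-1}\dd w=D^\alpha/\alpha$ and $\int_0^D w^{\alpha-1}\log(3D/w)\dd w=D^\alpha(\log 3/\alpha+1/\alpha^2)$) a bound of order $D^{\alpha}\alpha^{-1/2}$, not the stated $D^{(\alpha+1)/2}\alpha^{-1/2}$. These two powers are not comparable uniformly in $D$: $D^{\alpha}\leq D^{(\alpha+1)/2}$ only when $D\geq 1$, and the "geometric interpolation between the powers of $D$" you invoke is not a concrete argument that recovers the stated exponent (nor the stated constant $8C$) for all $D$. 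Note also that the alternative of bounding $\sqrt{\log(3D/w)}\leq\sqrt{3D/w}$ inside your integral fails, since $w^{\alpha-3/2}$ is not integrable at $0$ for $\alpha\leq\tfrac12$. The paper avoids this by staying in the $\|\cdot\|_2^\alpha$ metric: it bounds $N(\U,\|\cdot\|_2^\alpha,u)\leq(1+D/u^{1/\alpha})^d$ and uses $\log\bigl((1+D/u^{1/\alpha})\bigr)=\tfrac{1}{\alpha}\log\bigl((1+D/u^{1/\alpha})^{\alpha}\bigr)\leq\tfrac{1}{\alpha}\log(1+D^\alpha/u)\leq D^\alpha/(\alpha u)$, so that the integrand becomes $\sqrt{dD^\alpha/(\alpha u)}$, which integrates over $[0,D]$ to exactly $2D^{(\alpha+1)/2}d^{1/2}\alpha^{-1/2}$. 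Replacing your Cauchy--Schwarz computation with this bound closes the gap; otherwise what you prove is a correct concentration inequality of the same form but with a different dependence on the diameter $D$.
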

The proof is provided in Appendix \ref{sec:proof:prop:concent:F_alphaHolder}. 
In contrast to existing work in the literature that assumes Lipschitz continuity \cite{Bartlett2003,Koltchinskii2006,ShalevShwartz2009,wainwright_2019}, Proposition \ref{prop:concent:F_alphaHolder} also gives a bound for function classes that are only $\alpha$-H\"older continuous for values of $\alpha<1$, see \A{2}.

\subsubsection{Concentration using bounded differences and Dudley's bound}
Using Corollary  \ref{cor:concent:unif_bounded} and Proposition \ref{prop:concent:F_alphaHolder},  
we obtain a bound of the form of \eqref{eq:unif_gN_epsN_alphaN}.
\begin{proposition}[Concentration for $\alpha$-H\"older bounded function classes]\label{prop:concent:unif_bounded}
Assume that $h$ satisfies \A{2} and \A{3}. 
With the notations of Proposition \ref{prop:concent:F_alphaHolder}, define the constant 
$\tilde{C}=\left(
8CD^{\frac{\alpha+1}{2}}d^{\frac{1}{2}}n^{\frac{3}{2}}\E\left[M^2\right]^{\frac{1}{2}}\alpha^{-\frac{1}{2}}
+
\text{Trace}\left(\Sigma_0\right)^{\frac{1}{2}}
\right)$. 
Given any $\epsilon>0$, define the two sequences $(\epsilon_N)_{N\in\N}$ and $(\revThird{\beta_N})_{N\in\N}$ by
\begin{equation}
\label{eq:eps_N_alpha_N}
\epsilon_N=2\tilde{C}N^{-\frac{1}{2}+\frac{\epsilon}{2}}
\ \text{ and }\ 
\revThird{\beta_N}=\exp\left(-\frac{\tilde{C}^2}{2\bar{h}^2}N^\epsilon\right).
\end{equation}
Then, 
$$
\bar\Prob\left(
\sup_{u\in\U}\left\|\frac{1}{N}\sum_{i=1}^Nh(u,\omega^i)-\E[h(u,\revThird{\omega})]\right\|
\leq\epsilon_N
\right)
\geq
1-\revThird{\beta_N}
\quad
\text{for all }N\in\N.
$$
\end{proposition}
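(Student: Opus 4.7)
The plan is to directly combine the two ingredients already assembled in the section: the bounded-differences tail bound (Corollary \ref{cor:concent:unif_bounded}) and Dudley's expectation bound for $\alpha$-H\"older function classes (Proposition \ref{prop:concent:F_alphaHolder}). The former gives a sub-Gaussian deviation inequality around the expected suprema, and the latter controls that expected suprema by $\tilde C/\sqrt{N}$. All that remains is to calibrate the free parameter $\delta$ appearing in Corollary \ref{cor:concent:unif_bounded} so that the resulting deviation bound has the rate $N^{-1/2+\epsilon/2}$ promised by the statement.

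More concretely, I would proceed as follows. First, invoke Proposition \ref{prop:concent:F_alphaHolder} under \A{2}, which yields
$$
\bar\E\Big[\sup_{u\in\U}\Big\|\tfrac{1}{N}\textstyle\sum_{i=1}^N h(u,\omega^i)-\E[h(u,\omega)]\Big\|\Big]\ \leq\ \tilde C\, N^{-1/2}.
$$
Next, invoke Corollary \ref{cor:concent:unif_bounded} under \A{3}: for any $\delta>0$, the supremum exceeds its expectation by more than $\delta$ with $\bProb$-probability at most $\exp(-N\delta^2/(2\bar h^2))$. Setting $\delta = \tilde{C}\, N^{-1/2+\epsilon/2}$ makes this failure probability equal to $\exp(-\tilde C^2 N^\epsilon/(2\bar h^2))=\beta_N$. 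Finally, since $N^{\epsilon/2}\geq 1$ for all $N\in\N$, we have $\tilde C/\sqrt{N}\leq \delta$, and therefore on the event of probability at least $1-\beta_N$,
$$
\sup_{u\in\U}\Big\|\tfrac{1}{N}\textstyle\sum_{i=1}^N h(u,\omega^i)-\E[h(u,\omega)]\Big\|\ \leq\ \tfrac{\tilde C}{\sqrt N}+\delta \ \leq\ 2\tilde C\, N^{-1/2+\epsilon/2}\ =\ \epsilon_N,
$$
which is exactly the desired bound.

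There is no conceptual obstacle here, since the heavy lifting has already been done in Corollary \ref{cor:concent:unif_bounded} and Proposition \ref{prop:concent:F_alphaHolder}; the proof is essentially an algebraic calibration. The only minor point to be careful about is that $\tilde C$ is defined so that it collects both the chaining contribution and the $\text{Trace}(\Sigma_0)^{1/2}$ term, so the factor $2$ in $\epsilon_N=2\tilde C N^{-1/2+\epsilon/2}$ really does absorb both $\tilde C/\sqrt N$ (from the expectation) and $\delta=\tilde C N^{-1/2+\epsilon/2}$ (from the fluctuation), which is what the choice of $\delta$ ensures. This calibration in turn gives $\sum_N \beta_N<\infty$ for every $\epsilon>0$, as required downstream by Lemma \ref{lemma:ULLN_convCN}.
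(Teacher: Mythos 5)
Your proposal is correct and is essentially identical to the paper's own proof: both combine Corollary \ref{cor:concent:unif_bounded} with Proposition \ref{prop:concent:F_alphaHolder} and calibrate $\delta=\tilde{C}N^{-\frac{1}{2}+\frac{\epsilon}{2}}$ so that $\tilde{C}N^{-\frac{1}{2}}+\delta\leq 2\tilde{C}N^{-\frac{1}{2}+\frac{\epsilon}{2}}=\epsilon_N$ with failure probability $\beta_N$. The only detail the paper adds, which you may as well include, is the observation that \A{3} guarantees $\Sigma_0$ is finite so that $\tilde{C}<\infty$.
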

The proof is provided in Appendix \ref{sec:proof:prop:concent:unif_bounded}. \revThird{We obtain the following corollary. % 
\begin{corollary}\label{cor:prop:concent:unif_bounded}
Assume that $h$ satisfies \A{2} and \A{3}. 
Define  
$\tilde{C}$ as in Proposition \ref{prop:concent:unif_bounded}.  
Let $\epsilon>0$, $\beta\in(0,1)$, and $N\in\N$ be such than $N\geq\epsilon^{-2}(\tilde{C}+\bar{h}(2\log(1/\beta))^{\frac{1}{2}})^2$. 
Then, 
$$\bar\Prob\left(
\sup_{u\in\U}\left\|\frac{1}{N}\sum_{i=1}^Nh(u,\omega^i)-\E[h(u,\revThird{\omega})]\right\|
\leq\epsilon
\right)
\geq
1-\beta.
$$\end{corollary}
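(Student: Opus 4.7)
The plan is to obtain the corollary as a direct combination of Corollary \ref{cor:concent:unif_bounded} (the bounded differences tail bound) with Proposition \ref{prop:concent:F_alphaHolder} (the Dudley expectation bound), followed by an appropriate tuning of the deviation parameter $\delta$ to match the prescribed confidence level $\beta$. Unlike Proposition \ref{prop:concent:unif_bounded}, which packages these two ingredients into an almost-sure summable tail along an implicit schedule $(\epsilon_N,\beta_N)$ indexed by $N$, the corollary rearranges the same inequalities so that $(\epsilon,\beta)$ are free parameters and $N$ is the quantity being lower-bounded.

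Concretely, I would proceed as follows. First, fix $\delta>0$ and apply Corollary \ref{cor:concent:unif_bounded}: with $\bar{\Prob}$-probability at least $1-\exp\bigl(-N\delta^{2}/(2\bar h^{2})\bigr)$, the supremum $\sup_{u\in\U}\|\tfrac{1}{N}\sum_{i=1}^{N}h(u,\omega^{i})-\E[h(u,\omega)]\|$ is bounded by its expectation plus $\delta$. Second, bound this expectation using Proposition \ref{prop:concent:F_alphaHolder}, which under \A{2} yields $\bar\E[\sup_{u}\|\cdot\|]\leq \tilde C/\sqrt{N}$ with the constant $\tilde C$ defined in Proposition \ref{prop:concent:unif_bounded}. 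Combining the two steps gives, with probability at least $1-\exp\bigl(-N\delta^{2}/(2\bar h^{2})\bigr)$,
\[
\sup_{u\in\U}\Bigl\|\tfrac{1}{N}\sum_{i=1}^{N}h(u,\omega^{i})-\E[h(u,\omega)]\Bigr\|\;\leq\;\frac{\tilde C}{\sqrt{N}}+\delta.
\]

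Third, I would calibrate $\delta$ to the desired failure probability: setting $\exp\bigl(-N\delta^{2}/(2\bar h^{2})\bigr)=\beta$ gives $\delta=\bar h\sqrt{2\log(1/\beta)/N}$, so the right-hand side above becomes $(\tilde C+\bar h\sqrt{2\log(1/\beta)})/\sqrt{N}$. Finally, the hypothesis $N\geq \epsilon^{-2}(\tilde C+\bar h(2\log(1/\beta))^{1/2})^{2}$ is exactly the condition that makes this bound no larger than $\epsilon$, which delivers the claim.

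There is no real obstacle here, since all nontrivial work (McDiarmid concentration and the chaining/Dudley argument under $\alpha$-H\"older continuity) has already been carried out in the two results being combined. The only minor care point is the algebraic manipulation showing that the stated lower bound on $N$ is precisely the threshold above which $\tilde C/\sqrt N+\bar h\sqrt{2\log(1/\beta)/N}\leq\epsilon$, which follows from factoring $1/\sqrt N$ and inverting.
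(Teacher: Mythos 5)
Your proposal is correct and matches the paper's argument: the paper's proof likewise takes the intermediate bound $\bar\Prob\bigl(\sup_u\|\cdot\|\geq \tilde C/\sqrt N+\delta\bigr)\leq\exp\bigl(-N\delta^2/(2\bar h^2)\bigr)$ from the proof of Proposition \ref{prop:concent:unif_bounded} (itself obtained by combining Corollary \ref{cor:concent:unif_bounded} with Proposition \ref{prop:concent:F_alphaHolder}) and sets $\delta=(2\bar h^2\log(1/\beta)/N)^{1/2}$, after which the stated lower bound on $N$ gives the claim exactly as you describe.
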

}

By selecting $\epsilon\in(0,\revThird{\frac{1}{2}})$ in Proposition \ref{prop:concent:unif_bounded}, the sequences $(\epsilon_N)_{N\in\N}$ and $(\revThird{\beta_N})_{N\in\N}$ satisfy $\epsilon_N\to 0$ as $N\to\infty$ and 
$\sum_{N=1}^\infty\revThird{\beta_N}<\infty$. We use this fact in the next section.

\subsection{Proof of Theorem \ref{thm:main:h_lipschitz_bounded}}\label{sec:proof:thm:main:h_lipschitz_bounded}
By combining the concentration inequalities derived in this section with the results from Section \ref{sec:reformulation:rand_sets}, we prove Theorem \ref{thm:main:h_lipschitz_bounded}. 

\begin{proof}[Proof of Theorem \ref{thm:main:h_lipschitz_bounded} (asymptotic optimality of the SAA approach)]
\ \, First, \ the \\cost function $f$ satisfies the uniform law of large numbers in \eqref{eq:converges:fN:unif} thanks to \A{1}, see e.g. \cite[Theorem 5.40]{Bonnans2019} and \cite[Theorem 7.48]{Shapiro2014}. 
Second, Proposition \ref{prop:concent:unif_bounded} gives
\begin{equation*}
\bar\Prob\left(
\sup_{u\in\U}\left\|h_N(u,\bar\omega)-h_0(u)\right\|
\leq
\epsilon_N
\right)
\geq 
1-\revThird{\beta_N}
\quad\text{for all }N\in\N
\end{equation*}
for $\epsilon_N=2\tilde{C}N^{-\frac{1}{2}+\frac{\epsilon}{2}}$ and $\revThird{\beta_N}=\exp\left(-\frac{\tilde{C}^2}{2\bar{h}^2}N^{\epsilon}\right)$ for some constant $\tilde{C}$. 
Since $\epsilon\in(0,\frac{1}{2})$, 
$\frac{\epsilon_N}{\delta_N}=\frac{2\tilde{C}}{C}N^{\frac{\epsilon}{2}-\epsilon}\to 0$ as $N\to\infty$ and $\sum_{N=1}^\infty\revThird{\beta_N}<\infty$. 
Thus, by Lemma \ref{lemma:ULLN_convCN}, 
\begin{equation*}
\dHaus(C_N(\bar\omega),C_0)\to 0\ \text{as}\  N\to\infty
\end{equation*}
$\bProb$-almost-surely. 
The conclusion follows from Theorem \ref{thm:main:rand_sets}.
\end{proof}

\section{Extension to problems with expected value inequality constraints}\label{sec:inequality}
We generalize our analysis to account for $q\in\N$ expected value inequality constraints. We consider a  Carath\'eodory function $g:\U\times\Omega\to\R^q$ and the stochastic program
\begin{align*}
\prob^g: 
\ \inf_{u\in \U}
\ \E[f(u,\revThird{\omega})]
\ \ \,
\text{s.t.}
\ \ 
\E[g(u,\revThird{\omega})]\leq 0,
\ \ 
\E[h(u,\revThird{\omega})]=0.
\end{align*}
We define the true expectation function  
 and the empirical estimate
\begin{subequations}\label{eq:g0_gN}
\begin{align}
g_0:&\ \U\to\R^q, \ u\mapsto\E[g(u,\revThird{\omega})]
\\
g_N:&\ \U\times\bOmega\to\R^q, \ 
(u,\bomega)\mapsto
\frac{1}{N}\sum_{i=1}^N g(u,\omega^i).
\end{align}
\end{subequations}
Following \cite{Pagnoncelli2009,Shapiro2014}, we consider the following first set of assumptions 
on the inequality constraints function $g$.
\begin{description} 
\item[\namedlabel{A4a}{(A4a)}] % 
There exists a $\G$-measurable function $e:\Omega\to\R$ such that $\E[e(\revThird{\omega})]<\infty$ and $\|g(u,\omega)\|\leq e(\omega)$ for all $u\in\U$ $\Prob$-almost 
surely.
\item[\namedlabel{A4b}{(A4b)}] There exists a solution $u^\star\in S^\star_0$ % 
such that for any $\epsilon>0$, there exists $u\in\U$ such that $\|u-u^\star\|\leq\epsilon$ and $g_0(u)<0$.  
\end{description}
\Afoura implies the $\bProb$-almost-sure uniform convergence $\sup_{u\in\U}|g_N(u,\bomega)-g_0(u)|\to 0$ as $N\to\infty$.  
\Afourb is a standard constraints qualification condition   \cite{Pagnoncelli2009,Shapiro2014} that guarantees the $\bProb$-almost-sure feasibility of the sampled problems defined below for a \revThird{sample size $N$} large enough. \Afourb also guarantees the existence of a sequence of solutions \revThird{to} the sampled problems that converges to $u^\star$ as the \revThird{sample size} $N$ increases. 
Given a sequence of scalars $(\delta_N)_{N\in\N}$, we consider the sampled problems
\begin{align*}
\sprob_N^g(\bomega): 
\ &\inf_{u\in \U}
\ f_N(u,\bomega)
\ \ 
\text{s.t.}
\ \ 
g_N(u,\bomega)\leq 0,
\ \ 
&\hspace{-15mm}\left\|h_N(u,\bomega)\right\|\leq\delta_N.
\end{align*}  

\begin{wrapfigure}{R}{0.25\linewidth}
	\begin{minipage}{0.99\linewidth}
    \centering
\includegraphics[width=0.99\linewidth]{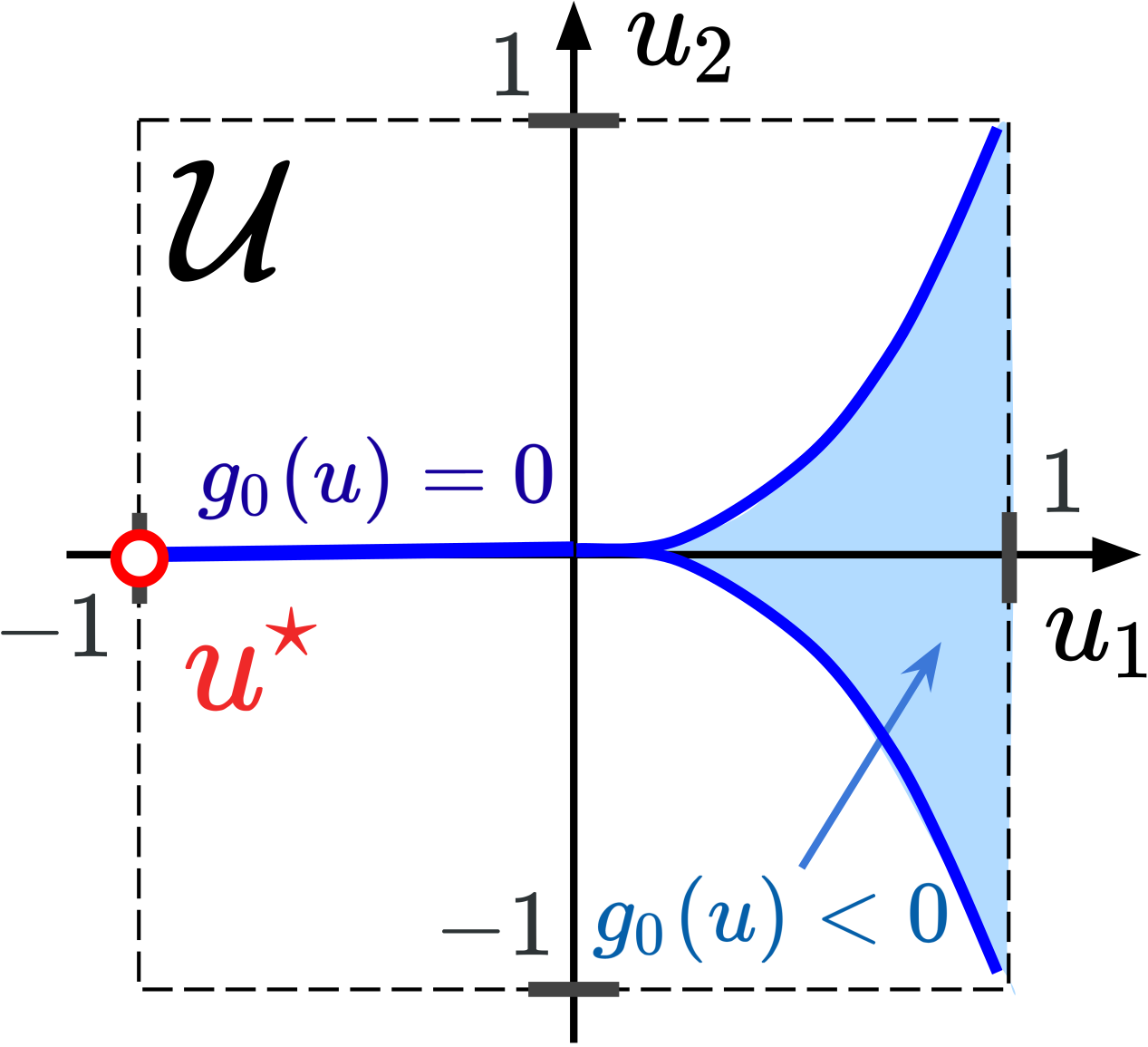}
\caption{Example that does not satify \Afourb: there exists no $u\in\U$ that satisfies $g_0(u)<0$ and $\|u-u^\star\|\leq 1$. \vspace{-7mm}}
\label{fig:g_qualification}
	\end{minipage}% 
\end{wrapfigure}

\noindent 
Following \A{2} and \A{3}, 
we propose the alternative assumption \Afourtilde to tackle problems for which \Afourb does not hold. 
\begin{description} 
\item[\namedlabel{A4tilde}{$\widehat{\text{(A4)}}$}] $g$ satisfies \A{2} and \A{3}: there exists an exponent $\alpha\in(0,1]$, a H\"older constant $M(\omega)$ with $\E[M^2]<\infty$, and a bound $0\leq\bar{g}<\infty$ such that $\Prob$-almost-surely,
\begin{enumerate}[leftmargin=3mm]
\item $\|g(u_1,\omega)-g(u_2,\omega)\|\leq M(\omega)\|u_1-u_2\|^\alpha$ for all $u_1,u_2\in\U$, and 
\item $\sup_{u\in\U} \|g(u,\omega)\|\leq \bar{g}$. 
\end{enumerate}
\end{description}

\textit{Remark \namedlabel{remark:Atilde}{4.1.}} As represented in Figure \ref{fig:g_qualification}, the stochastic program with $\U=[-1,1]^2$, % 
$f(u,\omega)=u_1$, and $g=(g_1,g_2)$ % 
with $g_1(u,\omega)=u_2-u_1\max(u_1,0)$ and $g_2(u,\omega)=-u_2-u_1\max(u_1,0)$ has the unique optimal solution $u^\star=(-1,0)$. 
However, for $\epsilon\leq 1$, there exists no $u\in\U$ that is strictly feasible ($g_0(u)<0$) and $\epsilon$-close to the solution $u^\star$, so % 
this problem does not satisfy \Afourb. However, this problem satisfies \Afourtilde. Compared to \Afourb, \Afourtilde represents smoothness and boundedness assumptions on the functions defining the problem that are simple to verify prior to applying the SAA approach.

Given $(\delta_N)_{N\in\N}$ and $\boldsymbol\delta_N=(\delta_N,\dots,\delta_N)\in\R^q$, we define the sampled problem
\begin{align*}
\widehat{\sprob_N^g}(\bomega): 
\ &\inf_{u\in \U}
\ f_N(u,\bomega)
\ \ 
\text{s.t.}
\ \ 
g_N(u,\bomega)\leq\boldsymbol\delta_N,
\ \ 
&\hspace{-15mm}\left\|h_N(u,\bomega)\right\|\leq\delta_N.
\end{align*}  
The next result generalizes Theorem \ref{thm:main:h_lipschitz_bounded} to problems with inequality constraints.

\begin{theorem}[Asymptotic optimality of the SAA approach]\label{thm:main:ineq}
Assume that the functions $f$ and $h$ satisfy \revThird{\A{1}, \A{2}, and \A{3}}. 
Given any constants $\epsilon\in(0,\frac{1}{2})$ and $C>0$, define the sequence $(\delta_N)_{N\in\N}$ as 
$$
\delta_N=CN^{-(\frac{1}{2}-\epsilon)}.
$$
For any $N\in\N$ and $\bomega\in\bOmega$, denote by $(f_0^\star,S_0^\star)$, 
$(f_N^\star(\bomega),S_N^\star(\bomega))$, and 
$(\widehat{f}_N^\star(\bomega),\widehat{S}_N^\star(\bomega))$ the optimal value and set of solutions to $\prob^g$, $\sprob_N^{\,g}(\bomega)$, and $\widehat{\sprob_N^{\,g}}(\bomega)$, respectively.

If $g$ satisfies \Afour, then, $\bProb$-a.s., there exists a subsequence $\{N_k(\bar\omega)\}_{k\in\N}$ such that 
\begin{equation*}
f_{N_k(\bar\omega)}^\star(\bar\omega)\rightarrow f_0^\star
\quad
\text{and}
\quad
\mathbb{D}(S^\star_{N_k(\bar\omega)}(\bar\omega),S_0^\star)\rightarrow 0
\quad\ \text{as} \quad 
k\rightarrow\infty
\end{equation*}
\revThird{and, if $f$ also satisfies \Aonetilde, then the entire sequence converges as $N\to\infty$.}

If $g$ satisfies \Afourtilde, then, $\bProb$-a.s., there exists a subsequence $\{N_k(\bar\omega)\}_{k\in\N}$ such that 
\begin{equation*}
\widehat{f}_{N_k(\bar\omega)}^\star(\bar\omega)\rightarrow f_0^\star
\quad
\text{and}
\quad
\mathbb{D}(\widehat{S}^\star_{N_k(\bar\omega)}(\bar\omega),S_0^\star)\rightarrow 0
\quad\ \text{as} \quad 
k\rightarrow\infty
\end{equation*}
\revThird{and, if $f$ also satisfies \Aonetilde, then the entire sequence converges as $N\to\infty$.}
\end{theorem}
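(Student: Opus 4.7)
The plan is to mirror the proof of Theorem~\ref{thm:main:h_lipschitz_bounded} by reformulating the problems using random compact feasible sets and invoking Theorem~\ref{thm:main:rand_sets}. Since the cost function is unchanged, its uniform convergence follows from \A{1} via the same ULLN argument, and the entire work reduces to establishing Hausdorff convergence of the feasible sets
\begin{equation*}
C_N^g(\bomega)=\{u\in\U:g_N(u,\bomega)\leq 0,\ \|h_N(u,\bomega)\|\leq\delta_N\},\ \  \widehat{C}_N^g(\bomega)=\{u\in\U:g_N\leq\boldsymbol\delta_N,\ \|h_N\|\leq\delta_N\}
\end{equation*}
toward $C_0^g=\{u\in\U:g_0(u)\leq 0,\ h_0(u)=0\}$. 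The treatment of the equality piece $\|h_N\|\leq\delta_N$ is identical to Section~\ref{sec:proof:thm:main:h_lipschitz_bounded}: Proposition~\ref{prop:concent:unif_bounded} gives the high-probability uniform bound \eqref{eq:unif_gN_epsN_alphaN} for $h$ with $\epsilon_N/\delta_N\to 0$ and $\sum_N\beta_N<\infty$.

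For the case $g$ satisfies \Afour, I would proceed in two sub-steps. First, \Afoura yields $\sup_{u\in\U}\|g_N(u,\bomega)-g_0(u)\|\to 0$ $\bProb$-a.s.\ by the ULLN, and combined with the equality-constraint bound, every sequence $u_N\in C_N^g(\bomega)$ satisfies $g_0(u_N)\leq o(1)$ and $h_0(u_N)\to 0$, so all limit points lie in $C_0^g$, giving $\bD(C_N^g,C_0^g)\to 0$. Second, to obtain $\bD(C_0^g,C_N^g)\to 0$, I would use the Slater-type condition \Afourb: there exists $u^\star\in S_0^\star$ with nearby strictly feasible points $u_\epsilon$ satisfying $g_0(u_\epsilon)<0$ and $h_0(u_\epsilon)=0$. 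Because the equality constraint is relaxed by $\delta_N\to 0$ and the inequality is strict, these $u_\epsilon$ enter $C_N^g(\bomega)$ for $N$ large, which combined with the uniform cost convergence gives $\limsup_N f_N^\star\leq f_0^\star$. The general convergence $\dHaus\to 0$ actually fails (one cannot cover all of $C_0^g$), but this is fine: Theorem~\ref{thm:main:rand_sets}'s conclusion is formulated with $\bD$, and the argument of \cite{Shapiro2014} adapted in Appendix~\ref{apdx:proof_thm:main:rand_sets} goes through because $\{u^\star\}\subseteq C_0^g$ and $f_N^\star\to f_0^\star$ suffice to force $\bD(S_N^\star,S_0^\star)\to 0$ along a subsequence.

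For the case $g$ satisfies \Afourtilde, the situation is more symmetric and amenable to the random-set framework developed for equality constraints. I would form the augmented constraint map $\tilde{H}(u,\omega)=(h(u,\omega),g(u,\omega))\in\R^{n+q}$, which inherits \A{2} and \A{3} with constants obtained by stacking those of $h$ and $g$. Applying Proposition~\ref{prop:concent:unif_bounded} to $\tilde{H}$ yields the high-probability uniform bound on $\sup_u\|\tilde H_N-\tilde H_0\|$, with $\epsilon_N$ and $\beta_N$ of the same form. Then I would extend Lemma~\ref{lemma:ULLN_convCN} to the mixed sub/zero-level set $\widehat{C}_N^g$: the forward inclusion $\bD(\widehat{C}_N^g,C_0^g)\to 0$ follows as in the equality case since $g_N\leq\boldsymbol\delta_N$ with $\delta_N\to 0$ forces $g_0\leq 0$ in the limit, and the reverse inclusion is even simpler than for zero-level sets because any $u\in C_0^g$ satisfies $g_0(u)\leq 0\leq\boldsymbol\delta_N-\epsilon_N\mathbf{1}$ and $\|h_0(u)\|=0\leq\delta_N-\epsilon_N$ for $N$ large, so $u\in\widehat{C}_N^g(\bomega)$ directly on the uniform-error event. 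Thus $\dHaus(\widehat{C}_N^g,C_0^g)\to 0$ $\bProb$-a.s., and Theorem~\ref{thm:main:rand_sets} concludes.

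The main obstacle is the \Afour case, precisely because the inequality constraints are \emph{not} relaxed there, so $\dHaus$-convergence of $C_N^g$ fails in general and one must revert to the Slater-type argument of \cite{Pagnoncelli2009,Shapiro2014} to manufacture a feasible approximant of $u^\star$ in $C_N^g$; reconciling this one-sided approximation with the $\bD$-formulation of Theorem~\ref{thm:main:rand_sets} is the delicate point and should be handled by inspecting the proof in Appendix~\ref{apdx:proof_thm:main:rand_sets} and verifying that the existence of a single recovery sequence converging to some $u^\star\in S_0^\star$ suffices. The \Afourtilde case, by contrast, falls directly within the random-set machinery already developed, and the final strengthening under \Aonetilde (convergence of the full sequence) follows exactly as in Theorem~\ref{thm:main:h_lipschitz_bounded} by the additional H\"older continuity of $f$.
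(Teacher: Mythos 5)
Your proposal follows essentially the same route as the paper: both cases are funneled into Theorem~\ref{thm:main:rand_sets}, the \Afour case by building a recovery sequence from the Slater-type condition \Afourb (the paper does this by extending Lemmas~\ref{lemma:there_is_a_subseq} and~\ref{lemma:there_is_optimal_conv_seq} to Lemmas~\ref{lemma:there_is_a_subseq:ineq} and~\ref{lemma:there_is_optimal_conv_seq:ineq}), and the \Afourtilde case by extending the random-level-set convergence of Lemma~\ref{lemma:ULLN_convCN} to the mixed set $\tilde{C}_N$; your stacking of $(h,g)$ into one map is a cosmetic variant of the paper's separate treatment of $h$ and $g$ followed by Boole's inequality. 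One imprecision to fix in the \Afour case: \Afourb does \emph{not} provide nearby points $u_\epsilon$ with $h_0(u_\epsilon)=0$, only $g_0(u_\epsilon)<0$ and $\|u_\epsilon-u^\star\|\leq\epsilon$, so you cannot assert that the equality constraint is exactly satisfied at $u_\epsilon$. The repair is what the paper does in Lemma~\ref{lemma:there_is_optimal_conv_seq:ineq}: choose $v_{\epsilon_N}$ with $\|v_{\epsilon_N}-u^\star\|^\alpha<\epsilon_N$ and use the H\"older continuity of $h_0$ from \A{2} together with $h_0(u^\star)=0$ to get $\|h_N(v_{\epsilon_N},\bomega)\|\leq(1+\E[M(\omega)])\epsilon_N\leq\delta_N$ for $N$ large, which is exactly where the relation $\epsilon_N/\delta_N\to 0$ is needed.
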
 
\begin{proof}
If $g$ satisfies \Afour, then the result follows from minor modifications to the proof of Theorem \ref{thm:main:rand_sets}. We refer to Appendix \ref{apdx:proof_thm:main:rand_sets} for details. 

Second, we assume that $g$ satisfies \Afourtilde and define the new feasible sets
\begin{subequations}
\begin{gather}
\tilde{C}_0=\{u\in\U:h_0(u)=0, 
\ 
g_0(u)\leq 0\}
\\
\tilde{C}_N:\bOmega\to\K: \, 
\bomega\mapsto\{u\in\U:
\|h_N(u,\bomega)\|\leq\delta_N, 
\ 
g_N(u,\bomega)\leq \boldsymbol\delta_N\}.
\end{gather}
\end{subequations}
By following the proof of Lemma \ref{lemma:ULLN_convCN}, we can show that $\bProb$-almost-surely, $\tilde{C}_N(\bomega)\neq\emptyset$ for all $N\geq N(\bomega)$ with $N(\bomega)$ large enough and 
$\dHaus(\tilde{C}_N(\bomega),\tilde{C}_0)\to 0$ as $N\to\infty$. % 
The conclusion then follows from Theorem \ref{thm:main:rand_sets}. 

Indeed, to show that $\tilde{C}_N(\bomega)\neq\emptyset$ for $N\geq N(\bomega)$, we define the  infeasibility event
\begin{align*}
\tilde{B}_N
&=
\Big\{
\bomega\in\bOmega:\inf_{u\in\U}\max\big(
\|h_N(u,\bomega)\|,
g_{N,1}(u,\bomega),
\dots,
g_{N,q}(u,\bomega)
\big)
>\delta_N
\Big\}
\subseteq\tilde{B}_N^h\cup \tilde{B}_N^g
\end{align*}
where $\tilde{B}_N^h=\{\bomega\in\bOmega:
\|h_N(u^\star,\bomega)\|>\delta_N\}$ and $\tilde{B}_N^g=\{\bomega\in\bOmega:
g_N(u^\star,\bomega)>\boldsymbol\delta_N\}$ with $u^\star\in S_0^\star$. Then, by Boole's inequality,
$\bProb(\tilde{B}_N)\leq \bProb(\tilde{B}_N^h)+\bProb(\tilde{B}_N^g)$. 
By following the proof of Lemma \ref{lemma:ULLN_convCN}, thanks to \hyperref[A2]{(A2-3)},  \Afourtilde and Proposition \ref{prop:concent:unif_bounded}, we obtain that $\sum_{N=1}^\infty \bProb(\tilde{B}_N)<\infty$ so that $\tilde{C}_N(\bomega)=\emptyset$ only finitely many times $\bProb$-a.s. The proof that $\dHaus(\tilde{C}_N(\bomega),\tilde{C}_0)\to 0$ as $N\to\infty$ with $\bProb$-probability one also follows with minor adaptations of the proof of Lemma \ref{lemma:ULLN_convCN}. 
\end{proof}

\section{Application to stochastic optimal control}\label{sec:stochastic_control} 
We apply our methodology to general stochastic control problems with expected value inequality and equality constraints. 
Let $T>0$ be a finite time horizon and  $n,m\in\N$ be state and control dimensions. 
We consider as optimization parameter an open-loop control trajectory $\bm{u}=(u_0,\dots,u_{S-1})\in\R^{Sm}$ with $S\in\N$ stepwise-constant controls $u_s$ taking values in a compact set $U\subset\R^m$. Thus, the control space $\U\cong U\times\dots\times U\subset\R^{Sm}=\R^d$ is finite-dimensional and any control $u\in\U\subset L^2([0,T];U)$ takes the form
\begin{equation}\label{eq:control_finite}
u(t)=\sum_{s=0}^{S-1} u_s\mathbbm{1}_{\left[\frac{sT}{S},\frac{(s+1)T}{S}\right)}(t)
\quad \text{where } u_s\in U \ \text{for all } s=0,\dots,S-1
\end{equation}
and satisfies $\|u\|_{L^2([0,T];U)}^2=\frac{T}{S}\|\bm{u}\|^2$. Thus, in the remainder of this section, we implicitly identify $\U$ with $U^S$ and their corresponding norms. 
This finite-dimensional piecewise-constant control representation is common in the literature \cite{Kushner2001,Acikmese2007}, as it allows for simple numerical implementation. Note that by increasing $S$, any function in $L^2([0,T];U)$ can be approximated arbitrarily well by a function in $\U$. 

We model the uncertain dynamical system using a stochastic differential equation (SDE). Let $b\in C^0(\R^n\times U;\R^n)$ be a drift function 
and $\sigma\in C^0(\R^n\times U;\R^{n\times n})$ be a diffusion coefficient. We assume the following regularity condition: 
\begin{description} 
\item[\namedlabel{A5}{(A5)}] 
There exists a bounded constant $K\geq 0$ such that for all $x,y\in\R^n$ and $u,v\in U$,
$$
\|b(x,u)-b(y,v)\|
+
\|\sigma(x,u)-\sigma(y,v)\|
\leq 
K(\|x-y\|+\|u-v\|).
$$
\end{description}
By requiring additional appropriate assumptions (see  \cite{Yong1999} and \cite{Frankowska2019}), generalizing our analysis to time-varying uncertain drift and diffusion coefficients (i.e., $b,\sigma$ depend on time $t\in[0,T]$ and on uncertain parameters $\omega\in\Omega$) is straight-forward. 

Let $(\Omega,\G,\F,\Prob)$ be a filtered probability space with $W$ the standard $n$-dimensional Brownian motion and
$L^2_{\F}(\Omega;C([0,T];\R^n))$ the space of square-integrable $\F$-adapted stochastic processes with continuous sample paths taking values in $\R^n$. 
Given a control $u\in\U$ and an initial condition $x_0\in\R^n$, we define the SDE 
\begin{equation}
\label{eq:SDE}
\dd x_u(t)=b(x_u(t),u(t))\dt+\sigma(x_u(t),u(t))\dd W_t, 
\ t\in[0,T],
\quad x_u(0)=x_0.
\end{equation}
Under \ALipschitz, for every $u\in\U$, \eqref{eq:SDE} has a unique solution $x_u\in L^2_{\F}(\Omega;C([0,T];\R^n))$ up to stochastic indistinguishability. 

In Appendix \ref{sec:xu_continuous}, we show that up to a modification, 
for any $\alpha\in(0,\frac{1}{2})$ and $u,v\in\U$,  there exists a constant $C_\alpha(\omega)$ satisfying $\E[C_\alpha(\revThird{\omega})^2]<\infty$ such that, $\Prob$-almost-surely, 
\begin{equation}
\label{eq:xu_lip}
\sup_{0\leq t\leq T}\|x_u(t,\omega)-x_v(t,\omega)\|
\leq 
C_\alpha(\omega)
\|u-v\|_{L^2([0,T];U)}^\alpha.
\end{equation}
In particular, up to a modification, $\Prob$-almost-surely, the map $u\mapsto x_u(t,\omega)$ is continuous and satisfies \A{2}. 

Let $\ell\in C^0(\R^n\times U;\R)$ be a running cost function, $\varphi\in C^0(\R^n;\R)$ be a terminal cost function,  and 
$G\in C^0(\R^n;\R^q)$, 
$H\in C^0(\R^n;\R^n)$ be two bounded 
constraints functions.  
We assume the following condition:
\\
\textbf{\namedlabel{A6}{(A6)}}
There exists a bounded constant $L\geq 0$ such that for all $x,y\in\R^n$ and $u,v\in U$,
\begin{equation*}
    \begin{gathered}
\|H(x)-H(y)\|+\|G(x)-G(y)\|\leq L\|x-y\|,
\quad\  
\|H(x)\|+\|G(x)\|\leq L,
\\
\|\ell(x,u)-\ell(y,v)\|
+
\|\varphi(x)-\varphi(y)\|
\leq
L(\|x-y\|+\|u-v\|).
    \end{gathered}
\end{equation*}
We apply our approach to optimal control problems with state constraints. % 
We define % 
\begin{align*}
\ocp:\ 
&
\inf_{u\in\U}
\ \  
\E\bigg[ \int_{0}^{T} \ell(x_u(t,\revThird{\omega}),u(t))\, \dd t
+
\varphi(x_u(T,\revThird{\omega}))
\bigg]
\\
&\text{ s.t.}
\ \ \ \eqref{eq:SDE},
\quad 
\E\bigg[\sup_{t\in[0,T]} G(x_u(t,\revThird{\omega}))\bigg]\leq 0, 
\quad 
\E[H(x_u(T,\revThird{\omega}))]=0
\end{align*}
with the notation $\sup_{t} G(x_u(t,\revThird{\omega}))\triangleq (\sup_{t} G_1(x_u(t,\revThird{\omega})),\dots,\sup_{t} G_q(x_u(t,\revThird{\omega})))$. 
We assume that \ocp  
is feasible. 

Since the map $u\mapsto x_u(t,\omega)$ is continuous $\Prob$-almost-surely, the maps $f:\U\times\Omega\to\R$, 
$g:\U\times\Omega\to\R^q$, and $h:\U\times\Omega\to\R^n$ defined as 
\begin{subequations}\label{eq:fh_ocp}
\begin{align}
f(u,\omega)&=\int_0^T\ell(x_u(t,\omega),u(t))\dd t
+
\varphi(x_u(T,\omega))
\\
g(u,\omega)&=
\bigg(
	\sup_{t\in[0,T]} G_1(x_u(t,\omega)),\dots,\sup_{t\in[0,T]} G_q(x_u(t,\omega))
\bigg)
\\
h(u,\omega)&=H(x_u(T,\omega))
\end{align}
\end{subequations}
are Carath\'eodory (note that $\Prob$-almost-surely, $x_u$ has continuous sample paths, so that $g$ is well-defined). 
Thus, \ocp takes the form of \prob. This justifies applying the SAA approach to approximately solve \ocp.

We denote the product probability space by $(\bar\Omega,\bar\G,\bar\Prob)$ (see Section \ref{sec:product_space_socp} for details) and a \revThird{sample} by $\bar\omega=(\omega^1,\dots)\in\bar\Omega$. Given a number $N\in\N$ of \revThird{realizations} $\omega^i\in\Omega$ of the Brownian motion and a padding $\delta_N>0$,  we define the sampled problem
\begin{align*}
\socp_N(\bar\omega):\ 
&
\inf_{u\in\U}\ \ 
\frac{1}{N}\sum_{i=1}^N  
\left(
\int_{0}^{T} \ell(x_u(t,\omega^i),u(t))\, \dd t
+
\varphi(x_u(T,\omega^i))
\right)
\quad
\text{ s.t.}\ \ \ \eqref{eq:SDE},
\\
&
\frac{1}{N}\sum_{i=1}^N \sup_{t\in[0,T]}G(x_u(t,\omega^i))\leq\boldsymbol\delta_N,
\qquad
\left\|\frac{1}{N}\sum_{i=1}^N
H(x_u(T,\omega^i))\right\|\leq\delta_N
\end{align*}
where $\boldsymbol\delta_N=(\delta_N,\dots,\delta_N)\in\R^q$. 
Since the control space $\U\cong\R^{Sm}$ is compact and finite-dimensional and the constraint functions $G$ and $H$ are smooth and bounded, we apply Theorem \ref{thm:main:ineq} and obtain the following consistency result.

\begin{theorem}[Asymptotic optimality of the SAA approach for stochastic optimal control]\label{thm:main:socp}
\revThird{Assume that \ocp is feasible and that $(b,\sigma,\ell,H,G,\varphi,\ell)$ satisfy \ALipschitz and \A{6}.} 
Given any constant $\epsilon\in(0,\frac{1}{2})$ and  $C>0$, define the sequence $(\delta_N)_{N\in\N}$ as 
$\delta_N=CN^{-(\frac{1}{2}-\epsilon)}$. 
For any $N\in\N$ and $\bar\omega\in\bar\Omega$, denote by $(f_0^*,S_0^*)$ and by $(f_N^*(\bar\omega),S_N^*(\bar\omega))$ the optimal value and set of solutions to \ocp and to $\socp_N(\bar\omega)$, respectively. 
Then, $\bProb$-almost-surely, % 
\revThird{$f_N^*(\bar\omega)\to f_0^*$ and $\mathbb{D}(S^*_N(\bar\omega),S_0^*)\to 0$ as $N\rightarrow\infty$.}
\end{theorem}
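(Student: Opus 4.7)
The plan is to cast \ocp as an instance of the abstract stochastic program $\prob^g$ (with $(f,g,h)$ given by \eqref{eq:fh_ocp} and $\U\cong U^S\subset\R^{Sm}$ compact) and invoke Theorem \ref{thm:main:ineq}. Since the theorem asserts convergence of the \emph{entire} sequence, I will verify that $f$ satisfies both \A{1} and \Aonetilde, that $h$ satisfies \A{2} and \A{3}, and that $g$ satisfies \Afourtilde (the Slater-type condition \Afourb need not hold for state-constrained problems, cf.~Remark \RemarkAtilde).

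The workhorse is the pathwise H\"older estimate \eqref{eq:xu_lip}: up to a modification, for any $\alpha\in(0,\frac{1}{2})$ there exists $C_\alpha(\omega)\geq 0$ with $\E[C_\alpha^2]<\infty$ such that $\sup_{t\in[0,T]}\|x_u(t,\omega)-x_v(t,\omega)\|\leq C_\alpha(\omega)\|u-v\|_{L^2}^\alpha$ $\Prob$-a.s. Combined with the Lipschitz-and-bounded assumption \A{6}, this yields:
\begin{itemize}
\item[(i)] For $h(u,\omega)=H(x_u(T,\omega))$: $\|h(u,\omega)-h(v,\omega)\|\leq L\,C_\alpha(\omega)\|u-v\|_{L^2}^\alpha$ and $\|h(u,\omega)\|\leq L$, so \A{2} holds with $M(\omega)=L\,C_\alpha(\omega)$ and \A{3} with $\bar h=L$.
\item[(ii)] For $g_i(u,\omega)=\sup_t G_i(x_u(t,\omega))$: using the elementary bound $|\sup_t a(t)-\sup_t b(t)|\leq \sup_t|a(t)-b(t)|$ componentwise together with the Lipschitz continuity of $G$, one gets $\|g(u,\omega)-g(v,\omega)\|\leq L\sqrt{q}\,C_\alpha(\omega)\|u-v\|_{L^2}^\alpha$ and $\|g(u,\omega)\|\leq L\sqrt{q}$, verifying \Afourtilde.
\item[(iii)] For the cost $f$: by \A{6}, $|f(u,\omega)-f(v,\omega)|\leq L(T+1)C_\alpha(\omega)\|u-v\|_{L^2}^\alpha+L\sqrt{T}\|u-v\|_{L^2}$, where the second term came from Cauchy--Schwarz applied to $\int_0^T\|u(t)-v(t)\|\,\dd t$. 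Since $\U$ has finite diameter $D$, we can bound $\|u-v\|_{L^2}\leq D^{1-\alpha}\|u-v\|_{L^2}^\alpha$, which gives an overall $\alpha$-H\"older bound with $L^2$-integrable constant, verifying \Aonetilde.
\end{itemize}
For the integrability condition \A{1}, I would use the standard linear-growth bound implied by \ALipschitz together with compactness of $U$ to get a moment estimate $\E\big[\sup_{t\in[0,T]}\|x_u(t,\omega)\|^2\big]\leq K_0$ with $K_0$ independent of $u\in\U$; combined with the linear growth of $\ell,\varphi$ (again from \A{6}), this dominates $|f(u,\omega)|$ by an integrable function $d(\omega)$.

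With all hypotheses in place, Theorem \ref{thm:main:ineq} applied in the regime where $g$ satisfies \Afourtilde and $f$ satisfies \Aonetilde delivers the full-sequence convergence $\widehat{f}_N^*(\bar\omega)\to f_0^*$ and $\mathbb{D}(\widehat{S}_N^*(\bar\omega),S_0^*)\to 0$ $\bProb$-a.s., which, after identifying $\widehat{\sprob_N^g}$ with $\socp_N$ (both enforce $g_N\leq\bm\delta_N$ and $\|h_N\|\leq\delta_N$), is precisely the claim. The only nontrivial work is Step (iii) above and the derivation of \eqref{eq:xu_lip} itself (deferred to Appendix \ref{sec:xu_continuous}); everything else is bookkeeping. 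The main conceptual obstacle is that the natural H\"older exponent for SDE flows with respect to the control is strictly below $\frac{1}{2}$, which is exactly why \A{2} and \Aonetilde were formulated with general exponent $\alpha\in(0,1]$ rather than requiring Lipschitz continuity.
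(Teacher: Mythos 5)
Your proposal is correct and follows essentially the same route as the paper: verify \A{1}, \Aonetilde, \hyperref[A2]{(A2-3)} for $h$, and \Afourtilde for $g$ via the pathwise H\"older estimate \eqref{eq:xu_lip} together with \A{6}, then invoke Theorem \ref{thm:main:ineq} in the \Afourtilde/\Aonetilde regime. The only cosmetic differences are in bookkeeping: you establish \A{1} via a uniform second-moment bound on $\sup_t\|x_u(t)\|$ plus linear growth, whereas the paper anchors at a fixed $u_0$ and uses \eqref{eq:xu_lip} with compactness of $\U$; and your explicit absorption of the Lipschitz-in-$u$ term of $\ell$ into the $\alpha$-H\"older bound via $\|u-v\|_{L^2}\leq D^{1-\alpha}\|u-v\|_{L^2}^{\alpha}$ fills in a detail the paper leaves implicit.
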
 
\begin{proof} 
First, the function $f$ defined in \eqref{eq:fh_ocp} satisfies \A{1} \revThird{and \Aonetilde}. Indeed, let $f_1(u,\omega)=\int_{0}^{T} \ell(x_u(s,\omega),u(s))\, \dd s$ and $f_2(u,\omega)=\varphi(x_u(T,\omega))$ so that $f=f_1+f_2$. Next, we show that $f_1$ satisfies \A{1}. The analysis for $f_2$ is similar. 
We define the map $d:\Omega\to\R:\omega\mapsto \sup_{u\in\U}|f_1(u,\omega)|$ which satisfies $|f_1(u,\omega)|\leq d(\omega)$ for all $u\in\U$ $\Prob$-a.s. Also, $\E[d(\revThird{\omega})]<\infty$. 
Indeed, 
for any $t\in[0,T]$ and $u_0\in\U$,
$$
\sup_{u\in\U}|\ell(x_u(t),u(t))|\leq
|\ell(x_{u_0}(t),u_0(t))|+
\sup_{u\in\U}
|\ell(x_u(t),u(t))-\ell(x_{u_0}(t),u_0(t))|
$$
which is $\Prob$-almost-surely bounded by a finite constant thanks to \A{6} and \eqref{eq:xu_lip} (note that the constant $C_\alpha(\omega)$ is integrable and $\U$ is compact). 
Thus, $\E[d(\revThird{\omega})]<\infty$ \revThird{so $f$ satisfies \A{1}. $f$ also satisfies \Aonetilde thanks to \A{6} and \eqref{eq:xu_lip}.} 

Second, we show that $g$ and $h$ as defined in \eqref{eq:fh_ocp} satisfy \hyperref[A2]{(A2-3)} and \Afourtilde, respectively. Indeed, thanks to \A{6}, the maps $g$ and $h$ are bounded. $g$ and $h$ may not be Lipschitz continuous, but are $\alpha$-H\"older continuous for any $\alpha\in(0,\frac{1}{2})$. Indeed, thanks to \eqref{eq:xu_lip} and \A{6}, $\Prob$-almost-surely, for all $u,v\in\U$, 
$\|h(u,\omega)-h(v,\omega)\|\leq LC_\alpha(\omega)\|u-v\|_{L^2([0,T];U)}^\alpha$ 
for any $\alpha\in(0,\frac{1}{2})$ with $\E[C_\alpha(\omega)^2]<\infty$. 
To show that $g$ is also $\alpha$-H\"older continuous, % 
 for any $j=1,\dots,q$ and $u,v\in\U$, we write
{\small
\begin{align*}
\revThird{\bigg|\sup_{t\in[0,T]}G_j(x_u(t,\omega))-\sup_{s\in[0,T]}G_j(x_v(s,\omega))\bigg|}
&\revThird{\leq}
\\[-1mm]
&\hspace{-40mm}\revThird{\leq
\sup_{t\in[0,T]}\left|G_j(x_u(t,\omega))-G_j(x_v(t,\omega))\right|
\leq
LC_\alpha(\omega)
\|u-v\|_{L^2([0,T];U)}^\alpha}
\end{align*}
}% 
\revThird{by \A{6} and \eqref{eq:xu_lip}.} % 
We apply Theorem \ref{thm:main:ineq} and the conclusion follows. 
\end{proof}

\section{Numerical results}\label{sec:numerical_results} 
\revSecond{We evaluate the proposed SAA approach on an optimization benchmark and on a stochastic optimal control problem. Python code to reproduce results is available at  \scalebox{1}{\url{https://github.com/StanfordASL/stochastic-prog}}.}

\subsection{\revSecond{Benchmark study}}
\revSecond{We consider 12 benchmark 
optimization problems with equality constraints from Hock and Schittkowski \cite{Hock1981} (problems 6, 27, 28, 42, 47-52, 61, and 79), as in the numerical study in \cite[Section 4]{KrklecJerinki2019}. These problems from \cite{Hock1981} take the form $\inf_{u\in\R^d}f_{\text{nom}}(u)$ such that $h_{\text{nom}}(u)=0$. To fit the problem formulation in \prob, we define $\U=\{u\in\R^d:\|u\|_\infty\leq 2\}$, 
%the expected value equality constraints $h(u,\omega)=h_{\text{nom}}((\omega_1u_1,\dots,\omega_du_d))+\omega=0$,
$f(u,\omega)=f_{\text{nom}}(u)$, and
randomize the equality constraints by defining $h(u,\omega)=(h^1,\dots,h^n)(u,\omega)$ 
% as 
% $$
% h^j(u,\omega)=h^j_{\text{nom}}((\omega_1u_1,\dots,\omega_du_d))+\omega_{d+j},
% \quad\quad
% j=1,\dots,n,
% $$
with $h^j(u,\omega)=h^j_{\text{nom}}((\omega_1u_1,\dots,\omega_du_d))+\omega_{d+j}$ for all $j=1,\dots,n$, 
where the $\omega_i$ are independent and Gaussian-distributed as $\omega_i\sim\mathcal{N}(1,5^2)$ for $i=1,\dots,d$ and  $\omega_{d+j}\sim\mathcal{N}(0,10^2)$ for $j=1,\dots,n$. 
% We do not introduce stochasticity in the cost functions (i.e., $f(u,\omega)=f_{\text{nom}}(u)$). 
The resulting problems \prob satisfy assumptions \hyperref[A1]{(\revThird{A1-3})}. We compute estimates of the optimal solutions  $u^\star$ of these stochastic programs \prob by solving deterministic reformulations of $\prob$ using \textrm{IPOPT} \cite{ipopt2006}, observing that the expected values $\E[h(u,\omega)]$ can be computed analytically for these problems \cite{KrklecJerinki2019}.  To apply the proposed SAA approach, we solve the sampled problems $\sprob_N(\bomega)$ using \textrm{IPOPT}, starting from the initial guesses $u^0$ from \cite{Hock1981}.}

\revSecond{We study the sensitivity of the proposed SAA approach to the sample size $N$ and to the relaxation constant $\delta_N$. We focus on the small sample size regime, which is practically relevant in many applications that face computational constraints or limited data. In Figure \ref{fig:results_benchmark}, \rev{for each choice of $(N,\delta_N)$,} we report the mean success rates (corresponding to successful numerical resolutions of the sampled problems $\sprob_N(\bomega)$ \rev{by \textrm{IPOPT}}) and \rev{median optimal value errors  $\frac{|f_N^\star(\bar\omega)-f_0^\star|}{1+|f_0^\star|}$ (corresponding to the suboptimality gap)} over $100$ runs of the SAA approach for \rev{all} program\rev{s} \prob. %\rev{W}e observe that 
\rev{C}hoosing a larger relaxation constant  $\delta_N$ yields \rev{slightly} higher success rates \rev{and lower suboptimality gaps}, suggesting that the proposed SAA approach may perform better than the standard SAA approach (corresponding to $\delta_N=0$) for small sample sizes. %The solution error is consistent for different values of $\delta_N$. 
Both performance metrics improve as the sample size increases, empirically validating the asymptotic optimality guarantees in Theorem \ref{thm:main:h_lipschitz_bounded}. %Selecting larger values of $\delta_N$ allows using smaller sample sizes $N$ to achieve desired success rates.
}

\begin{figure}[htb]% 
  \centering
\includegraphics[width=0.48\linewidth]{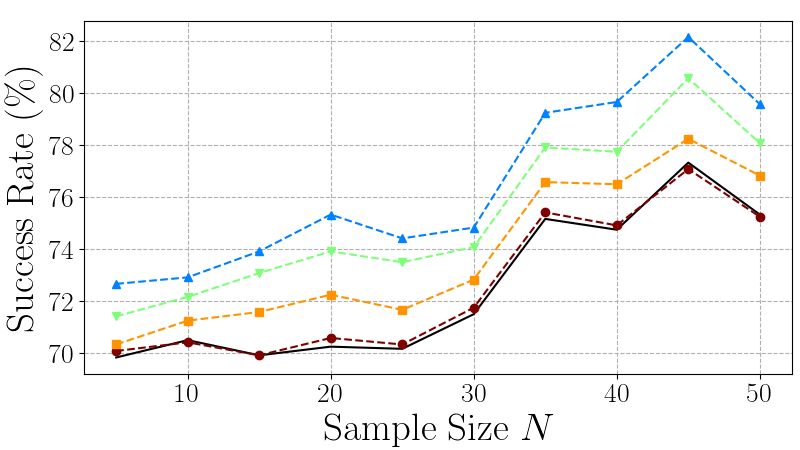}
\hspace{0.002\linewidth}
\includegraphics[width=0.48\linewidth]{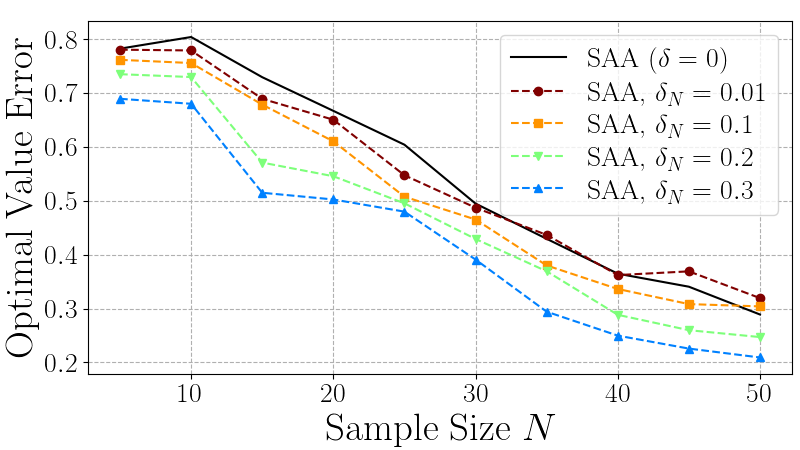}
  \caption{\revSecond{\rev{Benchmark study. Mean} success rates \rev{(representing successful numerical resolutions of the sampled problems)} and \rev{median optimal value errors} for different choices of $\delta_N$ and $N$.}
  \vspace{-6mm}}
\label{fig:results_benchmark}
\end{figure}

\subsection{\revSecond{Stochastic optimal control}}
We consider a Mars entry, descent, and landing (EDL) scenario where a spacecraft must safely deploy a rover at a landing site \cite{Steltzner2014}. Specifically, we consider the rocket-powered descent phase where the system must precisely reach a position above a landing site on Mars. 
The state of the system is given as $x=(r,v,m)\in\mathbb{R}^7$ where $r=(r_x,r_y,r_z)\in\R^3$ denotes the position,  $v\in\R^3$ the velocity, and $m\in\R$ the mass of the system. The control input is the thrust force $u=(u_x,u_y,u_z)\in\R^3$. We model the dynamics of the system with the SDE 
\begin{align}\label{eq:SDE_rocket}
\dd x_u(t)
&
=\begin{bmatrix}
v(t)\\
\frac{1}{m(t)}(\bar{T}u(t)-
\gamma |v|v)
- g
\\
-\revThird{\rho}\|u(t)\|
\end{bmatrix}\dt +\begin{bmatrix}
0\\
\frac{1}{m(t)}\text{diag}(\revThird{\eta_0} + % 
\revThird{\eta_1 v^2)}
\\
\revThird{0} % 
\end{bmatrix}\dd W_t
\end{align}
where $t\in[0,T]$ with 
$T=60\,\text{s}$, 
$\bar{T}=16\cdot10^3$, % 
$g=(0,0,3.71)$, % 
$\revThird{\rho}=8$, 
\revThird{$\gamma=1$, $\revThird{\eta_0}=10$, $\revThird{\eta_1}=0.2$,    
and}  
$\text{diag}(y_j)\in\R^{3\times 3}$ denotes the diagonal matrix with non-zero elements $y_j$ for any $y=(y_1,y_2,y_3)\in\R^3$. 
With this model, we account for % 
disturbances from external wind gusts \revThird{and drag}. 
The \revThird{state} dependency of the diffusion coefficient \revThird{$\sigma(x)$} makes numerically solving this problem challenging. Prior work in the literature considers deterministic dynamics \cite{Acikmese2007,Szmuk2016,Leparoux2022} or a diffusion coefficient that only depends on the mass $m(t)$ \cite{Exarchos2019}.

The system starts at 
$x_u(0)=x_0=(300, 0, 1500, 5,  0, -75, 
1800)$ and must reach the goal state $x_g=(0,0,100,0,0,-10)$ in expectation, which we enforce by defining the final state equality constraint function
$H:\R^7\to\R^6:x\mapsto (r,v)-x_g$. 
We enforce a glide-slope inequality constraint at slope $\revThird{\psi}=\revThird{35}^\circ$ defined by $G:\R^7\to\R^4, 
\ 
x\mapsto(\tan(\revThird{\psi})r_x-r_z,
            \tan(\revThird{\psi})r_y-r_z,
            -\tan(\revThird{\psi})r_x-r_z,
            -\tan(\revThird{\psi})r_y-r_z).
$% 

We minimize the control effort $\ell(x_u(t),u(t))=\|u(t)\|$ to reduce fuel consumption and the final dispersion $\varphi(x_u(T))=\|H(x_u(T))\|^2$. 
This choice for $\varphi$ minimizes the variance at the terminal state $x_u(T)$, since  $\E[\varphi(x_u(T))]=\text{Trace}(\text{Cov}(x_u(T)))$ if $\E[H(x_u(T))]=0$. 
Up to compositions with smooth cutoff functions, the functions $\varphi$ and $G,H$ satisfy \A{6}. 
The control space consists of bound and pointing constraints $U=\{u\in\R^3:\underline{u}\leq\|u(t)\|\leq\bar{u}, \|u(t)\|\cos(\theta)-u_z(t)\leq 0\}$ with min/maximum thrusts $(\underline{u},\bar{u})=(0.3,0.8)$ and pointing angle $\theta=45^\circ$. % 
We consider piecewise-constant controls with $S=20$ switches. 
The complete % 
formulation takes the form of \ocp. % 

We discretize \eqref{eq:SDE_rocket} using an Euler-Maruyama scheme with $S$ timesteps and enforce the inequality constraint at these $S$ nodes. The analysis of the discretization error is beyond the scope of this paper and left for future work, see e.g. \cite{Kushner2001}. 
We % 
\revThird{formulate $\socp(\bomega)$ using 
$N=20$ and $\delta_N=10^{-5}$. \revSecond{For this problem, we did not observe a significant influence of $\delta_N$ on the solution.}} \revThird{We solve $\socp(\bomega)$ via a standard direct method based on sequential convex programming. For further details, we refer to our open-source implementation. % at \scalebox{1}{\url{https://github.com/StanfordASL/stochastic-prog}}.
} 

We present results in Figure \ref{fig:results_rocket}. 
As a baseline, we solve the deterministic problem without uncertainty (i.e., \revThird{$\revThird{\eta_0}=\revThird{\eta_1}=0$}). % 
The corresponding optimal strategy minimizes fuel consumption ($m(T)=\revThird{1592}\, \textrm{kg}$)\revThird{.} % 
In contrast, when considering state-\revThird{dependent} % 
uncertainty, one obtains a different trajectory that minimizes the magnitude of disturbances by reducing the \revThird{velocity % 
magnitude} along the trajectory, albeit with larger fuel consumption ($m(T)=\revThird{1579}\, \textrm{kg}$). 
With $10^4$ Monte-Carlo simulations of \eqref{eq:SDE_rocket}, we observe that this solution % 
reduces the variance of the error at the final time, i.e., $\E[\|H(x_u(T)\|^2]$ is minimized. \revThird{Indeed, the standard deviation of the final altitude $r_z(T)$ is only $28\,\textrm{m}$ for the proposed stochastic programming approach, versus $48\,\textrm{m}$ for the deterministic baseline.}  Thus, the trajectory computed by solving $\socp(\bomega)$ that considers uncertainty is potentially easier to track with a feedback controller.

\begin{figure}[t]% 
  \centering
	\begin{minipage}{0.65\linewidth}
    \centering
\includegraphics[width=0.99\linewidth]{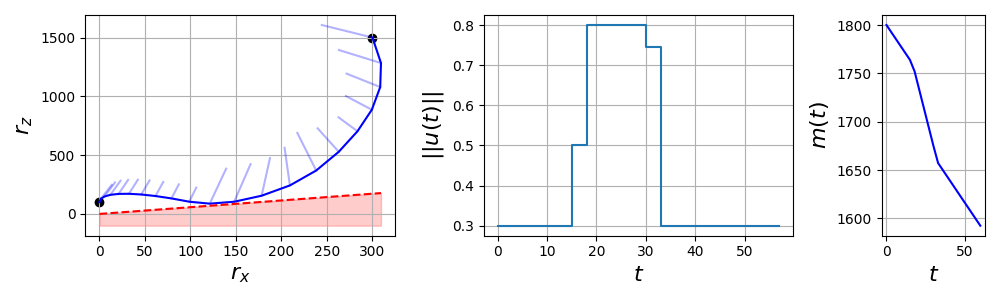}
\includegraphics[width=0.99\linewidth]{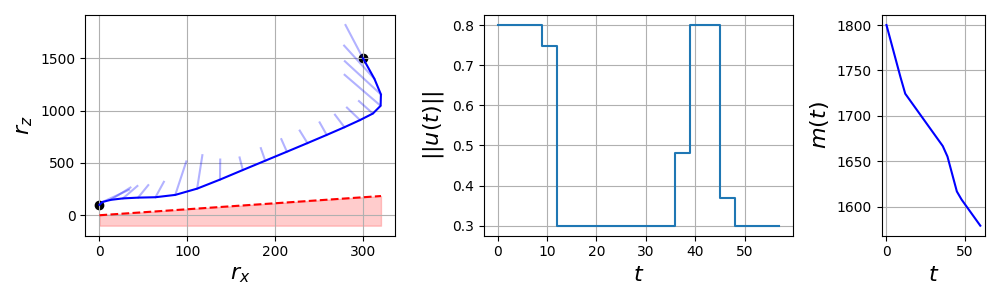}
	\end{minipage}% 
	\hspace{1mm}
	\begin{minipage}{0.33\linewidth}
    \centering
\includegraphics[width=0.99\linewidth]{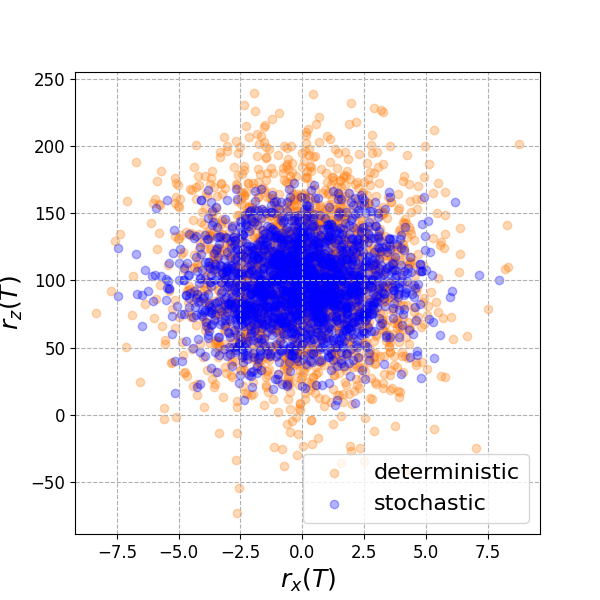}
	\end{minipage}% 
  \caption{\revThird{Mars rocket-powered descent results. \textbf{Top-left}: trajectory obtained with a deterministic baseline that does not consider uncertainty. \textbf{Bottom-left}: trajectory obtained by solving $\socp_N(\bomega)$. \textbf{Right}: horizontal and vertical positions at the final time from Monte-Carlo simulations of \eqref{eq:SDE_rocket} with the controls from the deterministic baseline and from $\socp_N(\bomega)$.}}
\label{fig:results_rocket}
\end{figure}

\section{Conclusion}
\label{sec:conclusions}
We presented a modification to the SAA approach that makes the method applicable to a large class of stochastic programs with both expectation equality and inequality constraints. This modification consists of relaxing the equality constraints by a scalar $\delta_N$ that decreases to zero as the \revThird{sample size} $N$ increases. Under mild assumptions, we proved the consistency of the approach by reformulating the problem using random compact sets and by analyzing the approximation error with concentration inequalities. We applied the analysis to challenging stochastic optimal control problems and demonstrated the benefits of the approach \rev{on benchmark problems and} %over deterministic methods 
on a Mars powered-descent control problem.

\revFinal{Our analysis focused on the sets of optimal solutions to stochastic programs and to their sampled approximations. These sampled problems can be solved using off-the-shelf optimization tools tailored to specific problem classes. 
However, their numerical resolution can remain computationally expensive for large sample sizes. Our study may inform 
the design of 
specialized optimization algorithms to efficiently solve these reformulated sampled problems, e.g., by adapting $\delta_N$ and $N$ during the optimization process.  
Other interesting} 
directions of future work include 
deriving asymptotic convergence rates and finite-sample error bounds for the solution sets error $\mathbb{D}(S^\star_N(\bomega),S^\star_0)$, relaxing the assumption that $\U\cong\R^d$ to allow for optimization in more general function spaces, and considering risk functionals instead of expectations.

 \section*{Acknowledgements}
The National
Science Foundation (NSF) via the Cyber-Physical Systems (CPS) program (award \#1931815), the NASA University Leadership Initiative
(grant \#80NSSC20M0163), \revSecond{the French National Research Agency (ANR) (grant
ANR-22-CE48-0006),} and KACST provided funds to assist the authors with their research, but this article
solely reflects the opinions and conclusions of its authors and not any NSF, NASA, ANR, nor KACST entity.

\newpage
\appendix

\section{Sample space}\label{sec:product_space_socp}
\subsection{Product probability space}
Let $(\Omega,\G,\Prob)$ be a probability space. We define the product probability space $(\bOmega,\bar\G,\bProb)$ such that $\bOmega=\Omega^{\N}$ is the product space equipped with the product $\sigma$-algebra 
$\bar\G = \sigma(\{\pi^{-1}_i(B): i\in\N, B \in \G\})$, where $\pi_i:\bOmega\to\Omega : \bar\omega=(\omega^1,\dots)\mapsto\omega^i$,  
and $\bProb:\bar\G\to[0,1]$ is the product measure obtained via Kolmogorov's extension theorem \cite[Theorem 6.3]{LeGall2016} such that for any $N\in\N$,
$$
\bProb(A_N)=\underbrace{(\Prob\otimes\dots\otimes\Prob)}_{N \text{ times}}(A_N)
$$
for any $A_N\in\G^N$, with the slight abuse of notation $\bProb(A_N)\triangleq\bProb((\pi_1,\dots,\pi_N)^{-1}(A_N))$. 

Intuitively, each element $\omega^i$ of the \revThird{sample} $\bomega=(\omega^1,\dots)$ is interpreted as an i.i.d. \revThird{realization} of $\omega$, since $\bProb(\{\bomega\in\bOmega:\omega^i\in A\})=\bProb(\pi_i^{-1}(A))=\Prob(A)$ for any $A\in\G$.

\subsection{Sample space for stochastic optimal control}
In Section \ref{sec:stochastic_control}, $(\Omega,\G,\Prob)$ is the probability space that can be defined by setting 
$\Omega=C([0,T];\R^n)$, $\G=\B(\Omega)$ the Borel $\sigma$-algebra for the metric topology for the norm $\|x\|=\sup_{t\in[0,T]}\|x(t)\|$, 
$\Prob$ the Wiener measure \cite[Section 2.2]{LeGall2016}, and 
$W:\omega\mapsto \omega$ the canonical process that corresponds to Brownian motion with its canonical filtration $\F$. 
We denote by $(\bOmega,\bar\G,\bProb)$ the product probability space, 
by $\bomega=(\omega^1,\dots)\in\bOmega$ a \revThird{sample},  
and by 
$L^2_{\F}(\Omega;C([0,T];\R^n))$ the space of square-integrable $\F$-adapted stochastic processes with continuous sample paths taking values in $\R^n$.

For any $u\in\U$, the solution  to the SDE in \eqref{eq:SDE} satisfies $x_u\in L^2_{\F}(\Omega;C([0,T];\R^n))$. Our results rely on the measurability of the maps $f_N$, $g_N$, and $h_N$ defined from 
\eqref{eq:fh_ocp}, which depend on the measurability of the maps
\begin{align*}
x_u^i: ([0,T]\times\bOmega,\B([0,T])\otimes\bar\G)&\to (\R^n,\B(\R^n)),\ (t,\bomega)\mapsto x_u^i(t,\bomega)=x_u(t,\omega^i).
\end{align*}
To prove that $x_u^i$ is measurable, we define the measurable map 
\begin{align*}
t\otimes W^i:([0,T]\times\bOmega,\B([0,T])\otimes\bar\G)
&\to 
([0,T]\times\Omega,\B([0,T])\otimes\G), \ (s,\bomega)\mapsto (s,\omega^i).
\end{align*}
Then, the map $x_u^i$ can be written as the composition $x_u^i=x_u\circ (t\otimes W^i)$. 
Since the stochastic process $x_u:([0,T]\times\Omega,\B([0,T])\otimes\G)\to (\R^n,\B(\R^n)):(t,\omega)\mapsto x_u(t,\omega)$ is measurable, $x_u^i$ is measurable. By the continuity of the maps $\ell$, $\varphi$, $G$, and $H$ (to show the measurability of $g$, note that $x_u$ has continuous sample paths), we conclude that the maps $f_N$, $g_N$, and $h_N$ defined as in \eqref{eq:fN_gN} and \eqref{eq:g0_gN} from \eqref{eq:fh_ocp} are measurable.

\section{Convergence of random compact sets and proof of Lemma \ref{lemma:ULLN_convCN}}\label{apdx:random_sets}
\begin{wrapfigure}{R}{0.41\linewidth}
	\begin{minipage}{0.95\linewidth}
    \centering
\vspace{-5mm}
\includegraphics[width=0.35\linewidth]{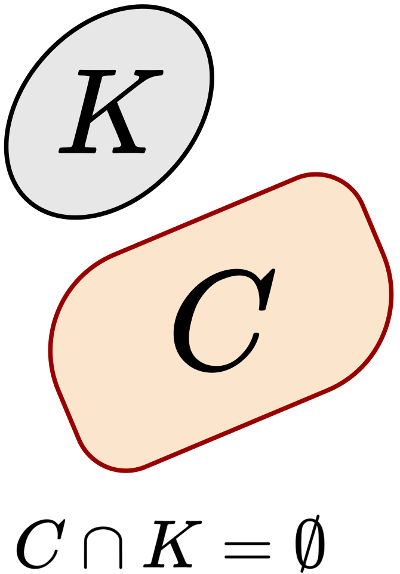}
\hspace{5mm}
\includegraphics[width=0.35\linewidth]{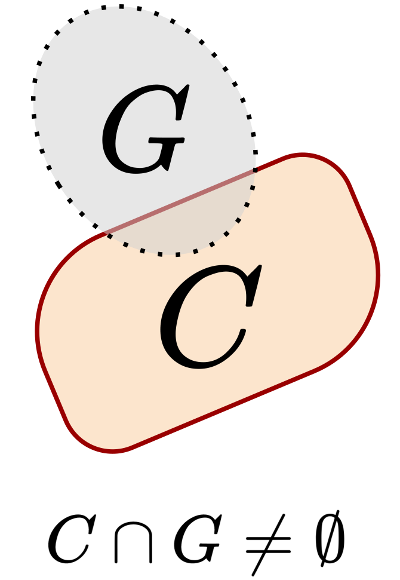}
\caption{Conditions of Theorem \ref{thm:conv_randSets_detLim}.}
\label{fig:thm:conv_randSets_detLim}
\vspace{-5mm}
	\end{minipage}% 
\end{wrapfigure}
In this work, we use the myopic topology 

\noindent % 
on the family $\K$ of compact subsets of $\R^d$ with its associated generated Borel $\sigma$-algebra $\B(\K)$. A random compact set is a map $C:\Omega\rightarrow\K$ that is measurable, i.e., $\{\omega\in\Omega : C(\omega)\in\mathscr{Y}\}\in\G$ for any $\mathscr{Y}\in\B(\K)$. 
We refer to  \cite{Matheron1975,Molchanov_BookTheoryOfRandomSets2017} for detailed treatments of random sets.

 The convergence of random compact sets can be characterized using the following result \cite[Proposition 1.7.23]{Molchanov_BookTheoryOfRandomSets2017} (see also \cite[Theorem 1-4-1]{Matheron1975} and \cite[Theorem D.3]{Molchanov_BookTheoryOfRandomSets2017}). 

\newpage
 \begin{theorem}[Convergence of random compact sets to a deterministic limit]\label{thm:conv_randSets_detLim} 
Let $(\Omega,\G,\Prob)$ be a probability space, 
$C\in\K$, and 
$\{C_N\}_{N=1}^\infty$ be a sequence of random compact sets. Assume that $\Prob$-almost-surely, there exists a set $K_0\in\K$ such that $C_N(\omega)\subset K_0$ for all $N\geq 1$, and that
\begin{itemize}[leftmargin=5.5mm]
\item 
For any $K\in\mathcal{K}$ such that $C\cap K=\emptyset$,
\begin{equation}\tag{{C1}}% 
\hspace{-2mm}
\Prob(C_N\cap K \neq \emptyset \ \textrm{infinitely often}) 
= 
\Prob\left(\bigcap_{M=1}^\infty\bigcup_{N=M}^\infty
\{C_N\cap K \neq \emptyset\}\right) 
=
0.
\end{equation}
\item
For any open subset $G\subset\R^n$ such that $C\cap G \neq \emptyset$,
\begin{equation}\tag{{C2}}
\hspace{-2mm}
\Prob(C_N\cap G = \emptyset \ \textrm{infinitely often}) 
= 
\Prob\left(\bigcap_{M=1}^\infty\bigcup_{N=M}^\infty
\{C_N\cap G = \emptyset\}\right) 
= 
0.
\end{equation}
\item[] \hspace{-5.5mm}Then, $\Prob$-almost-surely, the sequence   $\{C_N\}_{N=1}^\infty$ converges to $C$ in the myopic topology.
\end{itemize}
\end{theorem}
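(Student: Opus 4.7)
The plan is to prove the theorem by exploiting the fact that on the sub-family $\K(K_0)$ of compact subsets of the a.s.\ ambient compact $K_0$, the myopic topology coincides with the topology induced by the Hausdorff metric $d_H$, and is second countable and Polish. Under this identification, a.s.\ myopic convergence of $C_N$ to $C$ reduces to showing $d_H(C_N(\omega),C)\to 0$ for $\Prob$-a.e.\ $\omega$, which in turn decomposes into the two one-sided bounds $\sup_{x\in C_N}d(x,C)\to 0$ and $\sup_{y\in C}d(y,C_N)\to 0$. Each bound is delivered directly by one of the two hypotheses via a countable-family / Borel--Cantelli--type argument.

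\textbf{Step 1 (reduction).} I would first observe that $C\subseteq K_0$: indeed, if some $y_0\in C$ were outside $K_0$, then, since $K_0$ is compact and hence closed, a small open ball $G$ around $y_0$ disjoint from $K_0$ satisfies $C\cap G\neq\emptyset$ but $C_N\cap G\subseteq K_0\cap G=\emptyset$ for all $N$ almost surely, contradicting (C2). Having placed both $C$ and all $C_N$ inside the compact $K_0$, I would invoke the standard fact that on $\K(K_0)$ the myopic (Fell) topology agrees with the one induced by $d_H$, so the task reduces to proving $d_H(C_N,C)\to 0$ $\Prob$-a.s.

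\textbf{Step 2 ($\sup_{x\in C_N}d(x,C)\to 0$ via (C1)).} For each $k\in\N$ set
$$
K_k \;=\; \{x\in K_0 : d(x,C)\geq 1/k\},
$$
a closed subset of the compact $K_0$, hence compact, and disjoint from $C$. Applying (C1) to $K_k$ produces a $\Prob$-null event $N_k$ outside of which $C_N\cap K_k=\emptyset$ for all $N$ large enough, i.e., $\sup_{x\in C_N}d(x,C)<1/k$ eventually. Taking the countable union $\bigcup_k N_k$ (still null) yields $\sup_{x\in C_N}d(x,C)\to 0$ on its complement.

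\textbf{Step 3 ($\sup_{y\in C}d(y,C_N)\to 0$ via (C2)).} For each $k\in\N$ cover the compact set $C$ by finitely many open balls $G_{k,1},\dots,G_{k,m_k}$ of radius $1/k$ centered at points of $C$. Each $G_{k,i}$ meets $C$, so (C2) gives a null event $M_{k,i}$ outside of which $C_N\cap G_{k,i}\neq\emptyset$ for all sufficiently large $N$. Off the countable union $\bigcup_{k,i}M_{k,i}$, for every $k$ there is $N(k,\omega)$ such that $C_N$ hits every $G_{k,i}$ for $N\geq N(k,\omega)$; hence every $y\in C$ lies within $1/k$ of some $y_{k,i}\in G_{k,i}\cap C$, which lies within $1/k$ of some point of $C_N$, giving $\sup_{y\in C}d(y,C_N)<2/k$ eventually.

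\textbf{Step 4 (conclusion and main obstacle).} The union of the null sets from Steps 2 and 3 (plus the null set on which $C_N\not\subseteq K_0$) is still null, and outside it $d_H(C_N,C)\to 0$, which together with Step 1 yields $C_N\to C$ a.s.\ in the myopic topology. The main obstacle is the countability bookkeeping: the myopic topology is generated by an uncountable family of hit-and-miss sets, and (C1)--(C2) give only one null set per test set, so one must select a countable generating family (the $K_k$ and the finite $1/k$-covers of $C$) that still witnesses all neighborhoods of $C$. A secondary care point is the clean justification that myopic convergence on $\K$ reduces to Hausdorff convergence on $\K(K_0)$, which is where the uniform containment $C_N\subseteq K_0$ provided by the hypothesis is used.
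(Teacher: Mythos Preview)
The paper does not prove this theorem: it is quoted as a known result from \cite[Proposition 1.7.23]{Molchanov_BookTheoryOfRandomSets2017} (with parallel references to Matheron and to \cite[Theorem D.3]{Molchanov_BookTheoryOfRandomSets2017}) and then used as a black box in the proof of Lemma \ref{lemma:ULLN_convCN}. So there is no ``paper's own proof'' to compare against.

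Your proposal is a correct self-contained argument, and it follows the standard route one finds in the cited references: reduce to Hausdorff convergence on $\K(K_0)$ using the uniform containment, then control each one-sided Hausdorff excess by a \emph{countable} family of test sets so that the union of the exceptional null sets remains null. The annuli $K_k=\{x\in K_0:d(x,C)\geq 1/k\}$ for (C1) and the finite $1/k$-covers of $C$ for (C2) are exactly the right countable witnesses. Two small points worth tightening: (i) the hypothesis as phrased allows $K_0$ to depend on $\omega$; your Step~2 implicitly treats $K_0$ as deterministic (needed so that $K_k$ is a fixed compact to which (C1) applies). In the paper's application $K_0=\U$ is fixed, so this is harmless, and in general you can repair it by using the doubly-indexed family $K_{k,m}=\{x\in\overline{B(0,m)}:d(x,C)\geq 1/k\}$. (ii) The case $C=\emptyset$ deserves one sentence: then (C2) is vacuous, and applying (C1) with $K=K_0$ (or each $\overline{B(0,m)}$) forces $C_N=\emptyset$ eventually, which is myopic convergence to $\emptyset$.
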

We note that the random constraints sets $C_N(\bomega)$ defined in \eqref{eq:CN} are contained in the compact set $\U$. The requirement that $C_N(\bomega)\subset K_0\in\K$ for all $N\geq 1$ is necessary: the sets $C_N=\{0,N\}$ belong to $\K$ for all $N\in\N$ but do not converge in the myopic topology (although they converge to $\{0\}$ in the coarser Fell topology on the set of closed subsets of $\R^d$ \cite[Appendix D]{Molchanov_BookTheoryOfRandomSets2017}).

We use Theorem \ref{thm:conv_randSets_detLim} to prove Lemma \ref{lemma:ULLN_convCN}. 
\begin{proof}[Proof of Lemma \ref{lemma:ULLN_convCN}]
First, we prove that $\bProb$-a.s., $C_N(\bomega)\neq\emptyset$ for all $N\geq N(\bomega)$ with $N(\bomega)$ large enough.  
We define the ``infeasibility event'' 
\begin{align*}
B_N
=
\{\bomega\in\bOmega:\inf_{u\in\U}\|h_N(u,\bomega)\|>\delta_N\}
\subseteq
\{\bomega\in\bOmega:\|h_N(u^\star,\bomega)\|>\delta_N\}.
\end{align*}
Let $\bar{N}$ be large enough so that $\delta_N>\epsilon_N$ for all $N\geq \bar{N}$. Then, for $N\geq \bar{N}$, 
$$B_N\subseteq \{\bomega\in\bOmega:\|h_N(u^\star,\bomega)\|>\epsilon_N\}
=
\{\bomega\in\bOmega:\|h_N(u^\star,\bomega)-h_0(u^\star)\|>\epsilon_N\}.$$
Thus, $\bProb(B_N)\leq\revThird{\beta_N}$ for $N\geq \bar{N}$ by \eqref{eq:unif_gN_epsN_alphaN} and
$\sum_{N=1}^\infty \bProb(B_N)<\infty$. 
By the first Borel-Cantelli lemma, $\bProb$-a.s., $\prob_N(\bomega)$ is infeasible and $C_N(\bomega)=\emptyset$ only finitely many times.

Second, we prove that the following two conditions hold:
\begin{itemize}
\item For any $K\in\K$ such that $C_0\cap K=\emptyset$, 
\begin{equation}\tag{{C1}}\label{eq:C1}
\bProb\left(
C_N\cap K\neq\emptyset
\text{ infinitely often}
\right)=0.
\end{equation}
\item For any open subset $G\subset\U$ such that $C_0\cap G\neq\emptyset$,
\begin{equation}\tag{{C2}}\label{eq:C2}
\bProb\left(
C_N\cap G=\emptyset
\text{ infinitely often}
\right)=0.
\end{equation}
\end{itemize}

\begin{itemize}[leftmargin=5mm]
\item \textit{Proof of \eqref{eq:C1}:} Let $K\in\K$ be such that $C_0\cap K=\emptyset$.  

Then, there exists some $\bar\epsilon>0$ such that $\|h_0(u)\|\geq 2\bar\epsilon$ for all $u\in K$.   
Thus,  
\begin{align*}
0<2\bar\epsilon
\leq
\|h_0(u)\|
&\mathop{\leq}^\text{a.s.}
\|h_0-h_N(\cdot,\bomega)\|_\infty+\|h_N(u,\bomega)\|
\leq
\epsilon_N
+
\|h_N(u,\bomega)\|
\end{align*}
for all $u\in K$ with $\bProb$-probability (w.p.) at least $1-\revThird{\beta_N}$. Thus, w.p. at least $1-\revThird{\beta_N}$,
\begin{align}\label{eq:lemma:ULLN_convCN:1}
\inf_{u\in K}\|h_N(u,\bomega)\|
\geq 2\bar\epsilon-\epsilon_N
\geq \bar\epsilon
\end{align}
for $N$ large enough so that $\epsilon_N\leq\bar\epsilon$. 
To conclude, define the ``bad event'' 
\begin{align*}
B_N
=
\{\bomega\in\bOmega:C_N(\bomega)\cap K\neq\emptyset\}
=
\{\bomega\in\bOmega:\inf_{u\in K}\|h_N(u,\bomega)\|\leq\delta_N\}.
\end{align*}
Let $N$ be large enough so that $\epsilon_N\leq\bar\epsilon$ and $\delta_N\leq\bar\epsilon$. Then, $\bProb(B_N)\leq\revThird{\beta_N}$ by \eqref{eq:lemma:ULLN_convCN:1}. Thus, 
$
\sum_{N=1}^\infty \bProb(B_N)<\infty
$. 
We apply the first Borel-Cantelli lemma and conclude the proof of \eqref{eq:C1}.

\item \textit{Proof of \eqref{eq:C2}:} Let $G\subset\U$ be an open subset such that $C_0\cap G\neq\emptyset$. 

Thus, there exists some $u\in G$ such that $h_0(u)=0$. Then, 
with $\bProb$-probability at least $1-\revThird{\beta_N}$, 
for $N$ large enough so that $\epsilon_N\leq\delta_N$, 
\begin{align*}
\|h_N(u,\bomega)\|
&\leq
\|h_0-h_N(\cdot,\bomega)\|_\infty
+
\|h_0(u)\|
\leq
\epsilon_N
\leq
\delta_N
\end{align*}
so $u\in C_N(u,\bomega)$. This implies that $\bProb(C_N(\bomega)\cap G\neq\emptyset)\geq 1-\revThird{\beta_N}$. 
To conclude, define the ``bad event'' 
$B_N=
\{\bomega\in\bOmega:C_N(\bomega)\cap G =\emptyset\}$
so that 
$
\bProb(B_N)
=
1-\bProb(C_N(\bomega)\cap G \neq\emptyset)\leq \revThird{\beta_N}
$
for $N$ large enough. 
Then, 
$\sum_{N=1}^\infty\bProb(B_N)
=
\sum_{N=1}^\infty\left(
1-\bProb(C_N(\bomega)\cap G \neq\emptyset)\right)<\infty$. 
We apply the first Borel-Cantelli lemma and conclude the proof of \eqref{eq:C2}.
\end{itemize}
Using \eqref{eq:C1} and \eqref{eq:C2}, we apply Theorem \ref{thm:conv_randSets_detLim} (see \cite[Proposition 1.7.23]{Molchanov_BookTheoryOfRandomSets2017}) and 
conclude that the sequence of random compact sets $(C_N)_{N\in\N}$ converges to $C_0$ in the myopic topology $\bProb$-almost-surely. 
Since the myopic topology on $\K'$ is equivalent to the topology on $\K'$ generated by the Hausdorff metric on $\K'$ \cite{Matheron1975,Molchanov_BookTheoryOfRandomSets2017},  we obtain \eqref{eq:lemma:ULLN_convCN}.  
\end{proof}

\section{Asymptotic optimality of the SAA approach}\label{apdx:proof_thm:main:rand_sets} 

\subsection{Proof of Theorem \ref{thm:main:rand_sets}: the case with equality constraints}
We start with two lemmas. 
\begin{lemma}\label{lemma:there_is_a_subseq}
Let $(\bOmega,\bar\G,\bProb)$ be a probability space, 
$\U\subset\R^{d}$ and $C_0\subseteq\U$ be compact sets, 
$\{C_N\}_{N\in\N}$ be a sequence of random compact sets with $C_N:(\bOmega,\bar\G)\to(\K,\B(\K))$, and 
$\{u_N\}_{N\in\N}$ be a sequence of $\bar\G$-measurable random variables taking values in $\U$. Assume that:
\begin{itemize}
\item $\bProb$-almost-surely, $u_N(\bomega)\in C_N(\bomega)$ for all $N\in\N$. 
\item $\bProb$-almost-surely, 
$\bD(C_N(\bomega),C_0)\rightarrow 0$ 
as $N\rightarrow\infty$. 
\end{itemize}
Then, $\bProb$-almost-surely, there exists $u(\bomega)\in C_0$ and a subsequence $\{N_k(\bomega)\}_{k\in\N}$ such that $u_{N_k(\bomega)}(\bomega)\to u(\bomega)$ as $k\to\infty$. 
\end{lemma}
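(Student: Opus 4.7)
The plan is to work $\bomega$-by-$\bomega$ on the full-measure set where both assumptions hold simultaneously, then extract a convergent subsequence using compactness of $\U$ and finally show the limit lies in $C_0$ using the defining inequality for $\bD$.

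More precisely, let $\bar\Omega_0 \in \bar\G$ be a set with $\bProb(\bar\Omega_0) = 1$ on which (i) $u_N(\bomega) \in C_N(\bomega) \subseteq \U$ for all $N\in\N$ and (ii) $\bD(C_N(\bomega), C_0) \to 0$ as $N\to\infty$. Fix an arbitrary $\bomega \in \bar\Omega_0$. Since the sequence $\{u_N(\bomega)\}_{N\in\N}$ lies in the compact set $\U \subset \R^d$, by Bolzano-Weierstrass there exists a subsequence $\{N_k(\bomega)\}_{k\in\N}$ and a point $u(\bomega) \in \U$ such that $u_{N_k(\bomega)}(\bomega) \to u(\bomega)$ as $k\to\infty$.

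It remains to argue that $u(\bomega) \in C_0$. By the definition of $\bD$ in \eqref{eq:bD}, for each $k$,
\begin{equation*}
\inf_{y\in C_0}\|u_{N_k(\bomega)}(\bomega) - y\|
\leq
\sup_{x\in C_{N_k(\bomega)}(\bomega)}\inf_{y\in C_0}\|x - y\|
=
\bD(C_{N_k(\bomega)}(\bomega), C_0).
\end{equation*}
Since $C_0$ is compact, for each $k$ the infimum is attained at some $v_k \in C_0$, and the right-hand side tends to zero by assumption (ii). Hence $\|u_{N_k(\bomega)}(\bomega) - v_k\| \to 0$, and combined with $u_{N_k(\bomega)}(\bomega) \to u(\bomega)$ we obtain $v_k \to u(\bomega)$. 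Since $C_0$ is closed (compact in $\R^d$), $u(\bomega) \in C_0$, which concludes the proof.

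No real obstacle arises here: the statement is pointwise in $\bomega$ (no measurability of $u(\bomega)$ or of $N_k(\bomega)$ is claimed), so a standard compactness extraction combined with the one-sided Hausdorff-type control $\bD(C_N(\bomega), C_0) \to 0$ and the closedness of $C_0$ is sufficient. The only subtlety worth flagging is that the weaker convergence $\bD$ (rather than the full Hausdorff metric $\dHaus$) is enough precisely because we only need to push points of $C_N$ toward $C_0$, not the reverse direction.
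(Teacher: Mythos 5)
Your proof is correct and follows essentially the same route as the paper's: fix $\bomega$ in the full-measure set, extract a convergent subsequence by compactness of $\U$, and use $\bD(C_{N_k}(\bomega),C_0)\to 0$ together with closedness of $C_0$ to place the limit in $C_0$. The only cosmetic difference is that you pass through explicit minimizers $v_k\in C_0$, whereas the paper invokes the triangle inequality for $\bD$ and the property that $\bD(A,B)=0$ implies $A\subseteq B$; the two arguments are interchangeable.
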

\begin{proof}
We fix $\bomega\in\bOmega$ in a set of $\bProb$-measure one. 
Since $u_N(\bomega)\in C_N(\bomega)$ for all $N\in\N$,  
$C_N(\bomega)\subset\U$ for all $N\in\N$, 
and $\U\subset\R^d$ is compact, there exists a subsequence $\{N_k(\bomega)\}_{k\in\N}$ and a limit point $u(\bomega)\in \U$  such that $u_{N_k(\bomega)}(\bomega)\to u(\bomega)$ as $k\to\infty$.  

To conclude, we show that $u(\bomega)\in C_0$. By the triangle inequality, for any $k\in\N$,
\begin{align*}
\bD(\{u(\bomega)\},C_0)
&\leq
\bD(\{u(\bomega)\},\{u_{N_k(\bomega)}(\bomega)\})
+
\bD(\{u_{N_k(\bomega)}(\bomega)\},C_{N_k}(\bomega))
+
\bD(C_{N_k}(\bomega),C_0)\\
&=
\|u(\bomega)-u_{N_k(\bomega)}(\bomega)\|
+
\bD(C_{N_k}(\bomega),C_0).
\end{align*}
Taking the limit as $k\to\infty$, we obtain $\bD(\{u(\bomega)\},C_0)=0$. Thus, $u(\bomega)\in C_0$. 
\end{proof}

\begin{lemma}\label{lemma:there_is_optimal_conv_seq}
With the notations introduced in Theorem \ref{thm:main:rand_sets},  assume that:
\begin{itemize}
\item $\bProb$-almost-surely, 
$\bD(C_0,C_N(\bomega))\rightarrow 0$ 
as $N\rightarrow\infty$. 
\end{itemize}
Let $u^\star\in S_0^\star$ be any optimal solution to \prob. 
Then, 
$\bProb$-almost-surely, there exists a sequence $\{u_N(\bomega)\}_{N\in\N}$ such that $u_N(\bomega)\in C_N(\bomega)$ for all $N\in\N$ and $u_N(\bomega)\to u^\star$ as $N\to\infty$. 
\end{lemma}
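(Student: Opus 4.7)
The plan is to construct $u_N(\bomega)$ as a nearest point in $C_N(\bomega)$ to the fixed optimizer $u^\star \in C_0$. Intuitively, since $u^\star \in C_0$, the definition \eqref{eq:bD} of $\bD$ gives
$$
\inf_{y \in C_N(\bomega)} \|u^\star - y\| \;\leq\; \sup_{x \in C_0}\inf_{y\in C_N(\bomega)} \|x - y\| \;=\; \bD(C_0, C_N(\bomega)),
$$
which tends to $0$ as $N \to \infty$ $\bProb$-almost surely by hypothesis. So any approximate nearest-point selection must converge to $u^\star$.

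I would first fix a set $\bOmega_0 \in \bar\G$ of full $\bProb$-measure on which the hypothesis $\bD(C_0, C_N(\bomega)) \to 0$ holds. On this set, for $N$ sufficiently large, $C_N(\bomega)$ is also nonempty by Lemma~\ref{lemma:ULLN_convCN} (intersecting $\bOmega_0$ with the almost-sure event guaranteed by Lemma~\ref{lemma:ULLN_convCN} preserves full measure); call the threshold $N(\bomega)$. Then I would define $u_N(\bomega)$ in two regimes: for $N \geq N(\bomega)$, let $u_N(\bomega)$ achieve the minimum $\min_{y \in C_N(\bomega)} \|u^\star - y\|$, which exists because $C_N(\bomega)$ is compact and nonempty; for $N < N(\bomega)$, set $u_N(\bomega)$ to be any fixed element of $\U$ (these finitely many terms do not affect the limit). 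By construction, for $N \geq N(\bomega)$,
$$
\|u_N(\bomega) - u^\star\| \;=\; \inf_{y \in C_N(\bomega)} \|u^\star - y\| \;\leq\; \bD(C_0, C_N(\bomega)) \;\longrightarrow\; 0,
$$
giving the desired convergence.

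The one delicate point is the $\bar\G$-measurability of $u_N$ as a function of $\bomega$, since we need $\{u_N\}_{N\in\N}$ to be a sequence of random variables and not just deterministic choices on each sample path. I would invoke a standard measurable selection result for random compact sets, for instance the Kuratowski--Ryll-Nardzewski theorem or its specialization in \cite{Molchanov_BookTheoryOfRandomSets2017}: the distance map $\bomega \mapsto \inf_{y \in C_N(\bomega)} \|u^\star - y\|$ is $\bar\G$-measurable because $C_N$ is a random compact set, and the nearest-point selection in a nonempty random compact set admits a measurable version. This ensures $u_N : \bOmega \to \U$ is $\bar\G$-measurable. The remainder of the argument is purely pathwise and follows from the inequality above, so the measurable-selection step is the only real obstacle, but it is routine in the random-set framework already used throughout Section~\ref{sec:reformulation:rand_sets}.
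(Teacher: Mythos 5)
Your proof is correct and takes essentially the same route as the paper: define $u_N(\bomega)$ as a nearest point to $u^\star$ in the compact set $C_N(\bomega)$ and observe that $\|u_N(\bomega)-u^\star\|=\bD(\{u^\star\},C_N(\bomega))\leq\bD(C_0,C_N(\bomega))\to 0$. The only quibble is that non-emptiness of $C_N(\bomega)$ for large $N$ should not be imported from Lemma \ref{lemma:ULLN_convCN}, whose hypotheses (the uniform error bound \eqref{eq:unif_gN_epsN_alphaN}) are not assumed in this abstract lemma; it follows directly from the convention $\bD(C_0,\emptyset)=\infty$ together with the assumed convergence $\bD(C_0,C_N(\bomega))\to 0$, which is exactly how the paper argues, and your measurable-selection remark, while sound, is not needed since the lemma only asserts pathwise existence of the sequence.
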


\begin{proof}
We fix $\bomega\in\bOmega$ in a set of $\bProb$-measure one and $u^\star\in S_0^\star\subseteq C_0$.  
For any $\epsilon>0$, there exists $N_\epsilon(\bomega)\in\N$ such that $\bD(C_0,C_N(\bomega))<\epsilon$ for all $N\geq N_\epsilon(\bomega)$. In particular, 
since $\mathbb{D}(C_0,\emptyset)=\infty$ by definition, 
$C_N(\bomega)$ is not empty for $N\geq N_\epsilon(\bomega)$.  % 

Then, consider the sequence $\{u_N(\bomega)\}_{N\in\N}$ defined  for any $N\in\N$ as $u_N(\bomega)=\arg\inf_{u\in C_N(\bomega)}\|u-u^\star\|$. Note that  $u_N(\bomega)\in C_N(\bomega)$ for all $N\in\N$ since $C_N(\bomega)$ is closed. Since $u^\star\in C_0$,  for all $N\geq N_\epsilon(\bomega)$, 
$
\|u_N(\bomega)-u^\star\|
=
\bD(\{u^\star\},C_N(\bomega))
\leq
\bD(C_0,C_N(\bomega))
<
\epsilon.
$ 
Thus, the sequence $\{u_N(\bomega)\}_{N\in\N}$ satisfies  $\|u_N(\bomega) - u^\star\|<\epsilon$ for all $N\geq N_\epsilon(\bomega)$.  
As this statement holds for any $\epsilon>0$, this concludes the proof. 
\end{proof}

We observe that Lemma \ref{lemma:there_is_a_subseq} requires that $\bD(C_N(\bomega),C_0)\to 0$ and Lemma \ref{lemma:there_is_optimal_conv_seq} requires $\bD(C_0,C_N(\bomega))\to 0$ 
as $N\rightarrow\infty$. 
To ensure that both assumptions hold, we require that $\dHaus(C_N(\bomega),C_0)\rightarrow 0$ in Theorem \ref{thm:main:rand_sets}.

\begin{proof}[Proof of Theorem \ref{thm:main:rand_sets}]
We prove the two claims in \eqref{eq:thm:conv_sol_sets} sequentially. % 
Throughout, we fix $\bomega\in\bOmega$ in a set of $\bProb$-measure one. 

\textbf{Part 1:  there exists a subsequence $\{N_k(\bomega)\}_{k\in\N}$ such that 
$f_{N_k(\bomega)}^\star(\bomega)\mathop{\longrightarrow}\limits_{k\to\infty}f_0^\star.$}
Consider any sequence $\{u_N^\star(\bomega)\}_{N\in\N}$ of solutions \revThird{to} $\sprob(\bomega)$, such that  $u_N^\star(\bomega)\in S_N^\star(\bomega)$ and $f_N(u_N^\star(\bomega),\bomega)=f_N^\star(\bomega)$. 
By Lemma \ref{lemma:there_is_a_subseq} (note that $u_N^\star(\bomega)\in C_N(\bomega)$ for all $N\in\N$ and $\lim_{N\to\infty}\dHaus(C_N(\bomega),C_0)=0$ 
by assumption in \eqref{eq:converges:CN}), there exists a limit point $u(\bomega)\in C_0$ and a subsequence $\{N_k(\bomega)\}_{k\in\N}$ such that 
\begin{equation}\label{eq:subseq:conv}
u_{N_k(\bomega)}^\star(\bomega)\to u(\bomega) \quad\text{as}\quad k\rightarrow\infty.
\end{equation} 
Next, we prove that for some $u^\star\in S_0^\star$, 
the subsequence 
$\{N_k(\bomega)\}_{k\in\N}$ is such that 
\begin{subequations}
\begin{gather}
\label{eq:liminf_larger}
\mathop{\lim\inf}\limits_{k\to\infty} f_{N_k(\bomega)}(u_{N_k(\bomega)}^\star(\bomega),\bomega)\geq f_0(u^\star)
\\
\label{eq:limsup_smaller}
\mathop{\lim\sup}\limits_{k\to\infty} f_{N_k(\bomega)}(u_{N_k(\bomega)}^\star(\bomega),\bomega)\leq f_0(u^\star).
\end{gather}
\end{subequations} 

To show \eqref{eq:liminf_larger}, we first note that $\lim_{k\to\infty}f_{N_k(\bomega)}(u_{N_k(\bomega)}^\star(\bomega),\bomega)= f_0(u(\bomega))$, since
\begin{align*}
&|f_{N_k(\bomega)}(u_{N_k(\bomega)}^\star(\bomega),\bomega)- f_0(u(\bomega))|
\\
&\hspace{1cm}\leq 
\underbrace{|f_{N_k(\bomega)}(u_{N_k(\bomega)}^\star(\bomega),\bomega)- f_0(u_{N_k(\bomega)}^\star(\bomega))|}_{\text{$\to 0$ as $k\to\infty$ by \eqref{eq:converges:fN:unif}}}
+
\underbrace{|f_0(u_{N_k(\bomega)}^\star(\bomega))- f_0(u(\bomega))|}_{\text{$\to 0$ as $k\to\infty$ ($f_0$ continuous and \eqref{eq:subseq:conv})}}.
\end{align*}
Also,   
$f_0(u(\bomega))\geq f_0(u^\star)$ for any $u^\star\in S_0^\star$, since $u(\bomega)\in C_0$.
Combining the last two results, we obtain 
\eqref{eq:liminf_larger}.

Next, we show \eqref{eq:limsup_smaller}. 
By Lemma \ref{lemma:there_is_optimal_conv_seq}, there exists a sequence $\{u_N(\bomega)\}_{N\in\N}$ such that $u_N(\bomega)\in C_N(\bomega)$ for all $N\in\N$ and $\lim_{N\to\infty}u_N(\bomega)=u^\star$ for any $u^\star\in S_0^\star$.

Thus, since $u_{N_k(\bomega)}^\star(\bomega)\in S_{N_k(\bomega)}^\star(\bomega)$ and $u_{N_k(\bomega)}(\bomega)\in C_{N_k(\bomega)}(\bomega)$ for all $k\in\N$, 
\begin{align*}
f_{N_k(\bomega)}(u_{N_k(\bomega)}^\star(\bomega),\bomega)
&\leq 
f_{N_k(\bomega)}(u_{N_k(\bomega)}(\bomega),\bomega)
-
f_0(u_{N_k(\bomega)}(\bomega))
+
f_0(u_{N_k(\bomega)}(\bomega)).
\end{align*}
By uniform convergence in \eqref{eq:converges:fN:unif}, $\lim_{k\to\infty}|f_{N_k(\bomega)}(u_{N_k(\bomega)}(\bomega),\bomega)
-
f_0(u_{N_k(\bomega)}(\bomega))|
=0$. 
Also, 
$\lim_{k\to\infty}f_0(u_{N_k(\bomega)}(\bomega))=f_0(u^\star)$ since 
$\lim_{N\to\infty}u_N(\bomega)=u^\star$ and $f_0$ is continuous.  
Combining the last results, we obtain
$$
f_{N_k(\bomega)}(u_{N_k(\bomega)}^\star(\bomega),\bomega)
\leq 
f_{N_k(\bomega)}(u_{N_k(\bomega)}(\bomega),\bomega)
\to
f_0(u^\star)
\quad\text{as}\quad
k\to\infty
$$
which shows \eqref{eq:limsup_smaller}. 

Together, \eqref{eq:liminf_larger} and \eqref{eq:limsup_smaller} imply\footnote{Indeed, $f_0(u^\star)\leq \mathop{\lim\inf}\limits_{k\to\infty} f_{N_k(\bomega)}(u_{N_k(\bomega)}^\star(\bomega),\bomega)\leq \mathop{\lim\sup}\limits_{k\to\infty} f_{N_k(\bomega)}(u_{N_k(\bomega)}^\star(\bomega),\bomega)\leq f_0(u^\star)$.} that 
$
\lim_{k\to\infty}f_{N_k(\bomega)}(u_{N_k(\bomega)}^\star(\bomega),\bomega)= f_0(u^\star)$. This concludes the proof of \eqref{eq:thm:conv_sol_sets:cost} in Theorem \ref{thm:main:rand_sets}.

\textbf{Part 2: the subsequence $\{N_k(\bomega)\}_{k\in\N}$ is such that 
$\bD(S_{N_k(\bomega)}^\star(\bomega),S_0^\star)\mathop{\longrightarrow}\limits_{k\to\infty}
0$.}

By contradiction, assume that for some $\epsilon>0$ and $\bomega\in\bOmega$,
$\|u_{N_k(\bomega)}^\star(\bomega)-u^\star\|\geq\epsilon$ for all $u^\star\in S_0^\star$ and all $k\in\N$. 
This implies that
$\|u(\bomega)-u^\star\|\geq\epsilon$ for all $u^\star\in S_0^\star$, 
since 
$
\epsilon
\leq
\|u_{N_k(\bomega)}^\star(\bomega)-u^\star\|
\leq
\|u_{N_k(\bomega)}^\star(\bomega)-u(\bomega)\|
+
\|u(\bomega)-u^\star\|$ for all $k\in\N$, so that $\|u(\bomega)-u^\star\|=\lim_{k\to\infty}\left(\|u_{N_k(\bomega)}^\star(\bomega)-u(\bomega)\|
+
\|u(\bomega)-u^\star\|\right)\geq \epsilon$. 

Thus, $
f_0(u(\bomega))
>
f_0(u^\star)
$. 
Since $f_0$ is continuous and $u_{N_k(\bomega)}^\star(\bomega)\to u(\bomega)$, we obtain 
\begin{equation}\label{eq:cost_inequality_contradiction}
\lim_{k\rightarrow\infty}
f_0(u_{N_k(\bomega)}^\star(\bomega))
=
f_0(u(\bomega))
>
f_0(u^\star).
\end{equation}
However, 
$
|f_0(u(\bomega))-f_0(u^\star)|
\leq 
|f_0(u(\bomega))-f_0(u_{N_k(\bomega)}^\star(\bomega))|
+
|f_{N_k(\bomega)}(u_{N_k(\bomega)}^\star(\bomega),\bomega)- f_0(u^\star)|
+
|f_0(u_{N_k(\bomega)}^\star(\bomega))-f_{N_k(\bomega)}(u_{N_k(\bomega)}^\star(\bomega),\bomega)|
$ 
which goes to zero as $k\to\infty$ due to \eqref{eq:converges:fN:unif} and Part 1 (i.e., \eqref{eq:thm:conv_sol_sets:cost}). 
Thus, $f_0(u(\bomega))=f_0(u^\star)$, which contradicts \eqref{eq:cost_inequality_contradiction}. 
 This concludes the proof of \eqref{eq:thm:conv_sol_sets:set} in Theorem \ref{thm:main:rand_sets}.

\revThird{\textbf{Part 3: if $f$ also satisfies \Aonetilde, then the full sequence converges.}} 
\revThird{Under this additional assumption, $\Prob$-almost-surely, there exists $\alpha,L(\omega)>0$ such that 
$|f(u_1,\omega)-f(u_2,\omega)|\leq L(\omega)\|u_1-u_2\|^\alpha$ for all $u_1,u_2\in\U$. % 
In particular, $|f_N(u_1,\bomega)-f_N(u_2,\bomega)|\leq L(\bomega)\|u_1-u_2\|^\alpha$. For conciseness, we drop dependencies on $(\omega,\bomega)$. }

\revThird{It suffices to prove that $\lim_{N\to\infty}\bD(S_N^\star,S_0^\star)=0$. By contradiction, there exists $\bar\epsilon>0$ such that for all $M\in\N$, there is $N_M\geq M$ with \smash{$\sup_{x\in S_{N_M}^\star}\Inf_{y\in S_0^\star}\|x-y\|\geq\bar\epsilon$}. Thus, there exists a sequence $\{u_M\}_{M\in\N}$ with $u_M\in S_{N_M}^\star$ and $\inf_{y\in S_0^\star}\|u_M-y\|\geq\bar\epsilon$ for all $M\in\N$. }

\revThird{Since $u_M\in\U$ for all $M\in\N$ and $\U$ is compact, there exists a subsequence $\{u_{M_k}\}_{k\in\N}$ such that $u_{M_k}\to\bar{u}$ as $k\to\infty$ for some $\bar{u}\in C_0$ (thanks to \eqref{eq:converges:CN}) with $\lim_{k\to\infty}|f_{N_{M_k}}(u_{M_k})-f_0(\bar{u})|= 0$ (thanks to \eqref{eq:converges:fN:unif}). Since $\inf_{y\in S_0^\star}\|u_M-y\|\geq\bar\epsilon$ for all $M\in\N$, we have $\bar{u}\in C_0\setminus S_0^\star$, so
$$
\inf_{u\in C_0}f_0(u)<f_0(\bar{u})=\lim_{k\to\infty}f_{N_{M_k}}(u_{M_k})=
\lim_{k\to\infty}\inf_{u\in C_{N_{M_k}}}f_{N_{M_k}}(u).
$$
To conclude, it suffices to prove that 
$\lim_{N\to\infty}\inf_{u\in C_N}f_N(u)\leq\inf_{u\in C_0}f_0(u)$: % 
\begin{align*}
\inf_{u\in C_N}f_N(u)-\inf_{u\in C_0}f_0(u)
&=
\inf_{u\in C_N}f_N(u)-f_0(u^\star)% 
\quad\text{(for some $u^\star\in C_0$)}
\\
&=
\inf_{u\in C_N}f_N(u)-f_N(u^\star)+f_N(u^\star)-f_0(u^\star)
\\
&\leq
L\inf_{u\in C_N}\|u-u^\star\|^\alpha
+
\sup_{u\in\U}|f_N(u)-f_0(u)|
\\
&\leq
Ld_H(C_N,C_0)^\alpha
+
\sup_{u\in\U}|f_N(u)-f_0(u)|
\end{align*}
which converges to zero as $N\to\infty$ thanks to \eqref{eq:converges:CN} and \eqref{eq:converges:fN:unif}.}
\end{proof}

\subsection{Proof of Theorem \ref{thm:main:ineq}: the case with equality and inequality constraints}
We first extend Lemmas \ref{lemma:there_is_a_subseq} and \ref{lemma:there_is_optimal_conv_seq} to account for inequality constraints.
\begin{lemma}\label{lemma:there_is_a_subseq:ineq}
Let $(\Omega,\G,\Prob)$ be a probability space, 
$(\bOmega,\bar\G,\bProb)$ be the product probability space, 
$\U\subset\R^{d}$ and $C_0\subseteq\U$ be compact sets, 
$\{C_N\}_{N\in\N}$ be a sequence of random compact sets $C_N:(\bOmega,\bar\G)\to(\K,\B(\K))$, and 
$\{u_N\}_{N\in\N}$ be a sequence of random variables $u_N:(\bOmega,\bar\G)\to(\U,\B(\U))$. 
Let $g:\U\times\Omega\to\R^q$ be a  Carath\'eodory function and define $g_0$ and $g_N$ as in \eqref{eq:g0_gN}.  
Assume that $\bProb$-almost-surely, 
\begin{itemize}
\item  $u_N(\bomega)\in C_N(\bomega)$ for all $N\in\N$. 
\item $g_N(u_N(\bomega),\bomega)\leq 0$ for all $N\in\N$. 
\item $\bD(C_N(\bomega),C_0)\rightarrow 0$ 
as $N\rightarrow\infty$. 
\item $g$ satisfies \Afoura. 
\end{itemize}
Then, $\bProb$-almost-surely, there exists $u(\bomega)\in C_0$ with $g_0(u(\bomega))\leq 0$ and a subsequence $\{N_k(\bomega)\}_{k\in\N}$ such that $u_{N_k(\bomega)}(\bomega)\to u(\bomega)$ as $k\to\infty$. 
\end{lemma}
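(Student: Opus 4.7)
The plan is to adapt the proof of Lemma \ref{lemma:there_is_a_subseq} and add one step at the end that carries the inequality constraint $g_N(u_N,\bomega)\leq 0$ through to the limit. First, I would fix $\bomega\in\bOmega$ in a full-$\bProb$-measure set on which both $\bD(C_N(\bomega),C_0)\to 0$ and the uniform law of large numbers
$$
\sup_{u\in\U}\|g_N(u,\bomega)-g_0(u)\|\longrightarrow 0 \quad \text{as } N\to\infty
$$
hold simultaneously. The first property is an assumption of the lemma, and the second is implied by \Afoura together with the uniform law of large numbers (see \cite[Theorem 7.48]{Shapiro2014} and \cite[Theorem 5.40]{Bonnans2019}), as noted in the paper right after the statement of \Afoura.

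Next, exactly as in the proof of Lemma \ref{lemma:there_is_a_subseq}, since $u_N(\bomega)\in\U$ for all $N\in\N$ and $\U\subset\R^d$ is compact, I would extract a subsequence $\{N_k(\bomega)\}_{k\in\N}$ and a limit $u(\bomega)\in\U$ with $u_{N_k(\bomega)}(\bomega)\to u(\bomega)$ as $k\to\infty$. The same triangle-inequality argument,
$$
\bD(\{u(\bomega)\},C_0)\leq \|u(\bomega)-u_{N_k(\bomega)}(\bomega)\|+\bD(C_{N_k(\bomega)}(\bomega),C_0),
$$
then shows $u(\bomega)\in C_0$ upon letting $k\to\infty$.

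The new ingredient is to verify $g_0(u(\bomega))\leq 0$. Under \Afoura the family $\{g(u,\cdot)\}_{u\in\U}$ is dominated by the integrable function $e$, and since $g$ is Carath\'eodory the maps $u\mapsto g(u,\omega)$ are continuous $\Prob$-almost-surely, so the dominated convergence theorem implies that $g_0$ is continuous on $\U$. Using the triangle inequality,
$$
\|g_0(u(\bomega))-g_{N_k(\bomega)}(u_{N_k(\bomega)}(\bomega),\bomega)\|\leq \|g_0(u(\bomega))-g_0(u_{N_k(\bomega)}(\bomega))\|+\sup_{u\in\U}\|g_N(u,\bomega)-g_0(u)\|,
$$
both terms on the right tend to zero, so $g_{N_k(\bomega)}(u_{N_k(\bomega)}(\bomega),\bomega)\to g_0(u(\bomega))$ as $k\to\infty$. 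Since the assumption gives $g_{N_k(\bomega)}(u_{N_k(\bomega)}(\bomega),\bomega)\leq 0$ componentwise for every $k$, and the cone $\{y\in\R^q:y\leq 0\}$ is closed, the limit satisfies $g_0(u(\bomega))\leq 0$, which concludes the proof.

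The main obstacle is recognizing that \Afoura provides exactly the two ingredients needed to pass the inequality constraint through to the limit: the uniform convergence of $g_N$ to $g_0$ via the uniform law of large numbers, and the continuity of $g_0$ via dominated convergence. Once these are in hand, the closedness of the nonpositive orthant does the rest, and no additional smoothness or boundedness on $g$ (i.e., no appeal to \Afourtilde or to concentration inequalities) is required.
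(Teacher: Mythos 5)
Your proof is correct and follows essentially the same route as the paper: extract a convergent subsequence via compactness of $\U$, identify the limit as a point of $C_0$ via the triangle inequality with $\bD(C_{N_k},C_0)\to 0$, and then pass the constraint $g_{N_k}(u_{N_k},\bomega)\le 0$ to the limit using the uniform convergence from \Afoura, the continuity of $g_0$, and the closedness of the nonpositive orthant. The only cosmetic difference is that the paper invokes Lemma \ref{lemma:there_is_a_subseq} directly for the first two steps rather than repeating them, and your explicit justification of the continuity of $g_0$ via dominated convergence is a welcome detail the paper leaves implicit.
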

\begin{proof}
First, $\bProb$-almost-surely,  $\lim_{N\to\infty}\sup_{u\in\U}\|g_N(u,\bar\omega)-g_0(u)\|=0$ from \Afoura.

Consider the point $u(\bomega)\in C_0$ and the subsequence $\{N_k(\bomega)\}_{k\in\N}$ from Lemma \ref{lemma:there_is_a_subseq} such that $u_{N_k(\bomega)}(\bomega)\to u(\bomega)$ as $k\to\infty$. Then, $g_0(u(\bomega))\leq 0$. Indeed, 
\begin{align*}
\|g_N(u_N(\bomega),\bomega)){-}g_0(u(\bomega))\|
&\leq
\|g_N(u_N(\bomega),\bomega)){-}g_0(u_N(\bomega)))\|{+}\|g_0(u_N(\bomega)){-}g_0(u(\bomega))\|.
\end{align*}
Thus, by the uniform convergence from \Afoura and the continuity of $g_0$, by passing to the subsequence $N_k(\bomega)$, we obtain that  $g_0(u(\bomega))\leq 0$.
\end{proof}

\begin{lemma}\label{lemma:there_is_optimal_conv_seq:ineq}
With the notations used in Theorem \ref{thm:main:ineq}, assume that:
\begin{itemize}
\item $h$ satisfies \hyperref[A2]{(A2-3)}.
\item $g$ satisfies \Afour. 
\end{itemize}
Let  $u^\star\in S_0^\star$ denote the optimal solution from \Afourb. 
Then, $\bProb$-almost-surely, for $\tilde{N}_\epsilon(\bomega)\in\N$ large enough, there exists a sequence $\{u_N(\bomega)\}_{N\in\N}$ such that $u_N(\bomega)\in C_N(\bomega)$  and  $g_N(u_N(\bomega),\bomega)\leq 0$ for all $N\geq \tilde{N}_\epsilon(\bomega)$, and $u_N(\bomega)\to u^\star$ as $N\to\infty$. 
\end{lemma}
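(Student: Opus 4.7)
The argument uses three ingredients:
(i) the $\bProb$-almost sure Hausdorff convergence $\dHaus(C_N(\bomega), C_0)\to 0$, obtained from Lemma~\ref{lemma:ULLN_convCN} whose hypothesis~\eqref{eq:unif_gN_epsN_alphaN} is supplied by Proposition~\ref{prop:concent:unif_bounded} under \A{2}--\A{3};
(ii) the uniform law of large numbers $\rho_N^g(\bomega):=\sup_{u\in\U}\|g_N(u,\bomega)-g_0(u)\|\to 0$ $\bProb$-almost surely, a standard consequence of \Afoura (see e.g.\ \cite[Thm.~7.48]{Shapiro2014}); and
(iii) the strict-feasibility qualification \Afourb.
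The plan is to fix $\bomega$ in the intersection of the two full-measure events from (i)--(ii), construct for each $\epsilon>0$ a candidate point by projecting a strictly feasible $v_\epsilon$ onto $C_N(\bomega)$, and then extract a diagonal over $\epsilon\downarrow 0$.

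\textbf{Construction by projection.} For each $\epsilon>0$, I would use \Afourb to pick $v_\epsilon\in\U$ with $\|v_\epsilon-u^\star\|\leq\epsilon$ and $g_0(v_\epsilon)<0$, and set $\gamma_\epsilon:=-\max_j g_{0,j}(v_\epsilon)>0$. For $N$ large enough that $C_N(\bomega)\neq\emptyset$ (guaranteed by Lemma~\ref{lemma:ULLN_convCN}), define
$$u_N^\epsilon(\bomega)\in\arg\Inf_{u\in C_N(\bomega)}\|u-v_\epsilon\|,$$
which exists by compactness. Using $u^\star\in C_0$ and the triangle inequality for $\bD$,
$$\bD(\{v_\epsilon\},C_N(\bomega))\leq\bD(\{v_\epsilon\},C_0)+\dHaus(C_0,C_N(\bomega))\leq \epsilon+\dHaus(C_0,C_N(\bomega)),$$
so $\|u_N^\epsilon-u^\star\|\leq 2\epsilon+\dHaus(C_0,C_N(\bomega))$, which is $\leq 3\epsilon$ once $N$ is large enough.

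\textbf{Propagating feasibility to $g_N$.} By continuity of $g_0$ on the compact set $\U$, there exists $r_\epsilon>0$ such that $g_0\leq-\gamma_\epsilon/2$ on $\overline{B}_{r_\epsilon}(v_\epsilon)\cap\U$. I would replace $\epsilon$ by $\epsilon':=\min(\epsilon,r_\epsilon/3)$ so that the inequality $\epsilon'+\dHaus(C_0,C_N(\bomega))\leq r_{\epsilon'}$ holds for $N$ large, forcing $u_N^{\epsilon'}\in\overline{B}_{r_{\epsilon'}}(v_{\epsilon'})$ and hence $g_0(u_N^{\epsilon'})\leq-\gamma_{\epsilon'}/2$. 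Then, for $N\geq\tilde N_\epsilon(\bomega)$ large enough that $\rho_N^g(\bomega)\leq\gamma_{\epsilon'}/4$,
$$g_N(u_N^{\epsilon'},\bomega)\leq g_0(u_N^{\epsilon'})+\rho_N^g(\bomega)\leq -\gamma_{\epsilon'}/4<0.$$
A diagonal selection $\epsilon_k=1/k$ with $k(N):=\sup\{k\in\N: N\geq\tilde N_{1/k}(\bomega)\}\to\infty$ then yields the single sequence $u_N(\bomega):=u_N^{\epsilon_{k(N)}}(\bomega)$ with $u_N(\bomega)\in C_N(\bomega)$, $g_N(u_N(\bomega),\bomega)\leq 0$, and $u_N(\bomega)\to u^\star$.

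\textbf{Main obstacle.} The delicate step is that the inner radius $r_\epsilon$ of the strict-feasibility set is only guaranteed to be positive, not bounded below by any fixed function of $\epsilon$, and may shrink as $\epsilon\downarrow 0$ when a component of $g_0$ is active at $u^\star$. Continuity of $g_0$ alone closes the argument for each fixed $\epsilon$, which is why the diagonal extraction is forced rather than a direct coupling $\epsilon=\epsilon(N)$; its validity rests only on the finiteness of each $\tilde N_\epsilon(\bomega)$, which in turn follows from the $\bProb$-a.s.\ convergences $\dHaus(C_N(\bomega),C_0)\to 0$ and $\rho_N^g(\bomega)\to 0$.
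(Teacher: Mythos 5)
There is a genuine gap at the step ``Propagating feasibility to $g_N$,'' and it is exactly where your route diverges from what the hypotheses can support. Your candidate $u_N^{\epsilon}$ is the projection of $v_\epsilon$ onto $C_N(\bomega)$, and the only bound on its displacement is $\|u_N^{\epsilon}-v_\epsilon\|\leq \bD(\{v_\epsilon\},C_0)+\dHaus(C_0,C_N(\bomega))$, whose first term can be as large as $\epsilon$ because \Afourb does not place $v_\epsilon$ in $C_0$ (it only controls $\|v_\epsilon-u^\star\|$ and the sign of $g_0(v_\epsilon)$). A displacement of order $\epsilon$ can exit $\{g_0<0\}$ entirely. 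Concretely, take $\U=[-1,1]^2$, $h_0(u)=u_2$, $g_0(u)=u_1-K|u_2|$ with $K>1$, $f_0(u)=-u_1$, so $u^\star=(0,0)$; the point $v_\epsilon=(\epsilon/2,\epsilon/2)$ is a legitimate choice under \Afourb, yet its projection onto $C_N(\bomega)\approx\{u:|u_2|\leq\delta_N\}$ converges to $(\epsilon/2,0)$, where $g_0=\epsilon/2>0$, so $g_N(u_N^{\epsilon},\bomega)>0$ for all large $N$ and your threshold $\tilde N_\epsilon(\bomega)$ never exists. Your attempted repair --- replacing $\epsilon$ by $\epsilon'=\min(\epsilon,r_\epsilon/3)$ so that $\epsilon'+\dHaus(C_0,C_N(\bomega))\leq r_{\epsilon'}$ --- is circular: the radius on the right-hand side is the one attached to the \emph{new} point $v_{\epsilon'}$, nothing bounds $r_{\epsilon'}$ from below in terms of $\epsilon'$, and iterating the substitution is an infinite regress. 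Since the diagonal extraction is valid only if each $\tilde N_\epsilon(\bomega)$ is finite, the argument does not close.

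The paper avoids this by never projecting: it shows that the strictly feasible point itself, taken along a sequence $v_{\epsilon_N}$ with $g_0(v_{\epsilon_N})<0$ and $\|v_{\epsilon_N}-u^\star\|^\alpha<\epsilon_N$, already belongs to $C_N(\bomega)$ with probability at least $1-\beta_N$. The ingredient you are missing is the H\"older continuity of the limit $h_0$ inherited from \A{2} via Jensen's inequality: since $h_0(u^\star)=0$, one gets $\|h_0(v_{\epsilon_N})\|\leq \E[M(\omega)]\,\|v_{\epsilon_N}-u^\star\|^\alpha\leq \E[M(\omega)]\,\epsilon_N$, and because $\epsilon_N/\delta_N\to 0$ the relaxed tube $\{u:\|h_N(u,\bomega)\|\leq\delta_N\}$ is wide enough to absorb both this bias and the sampling error $\epsilon_N$ from Proposition \ref{prop:concent:unif_bounded}; Borel--Cantelli then makes the membership hold for all large $N$ almost surely. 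With $v_{\epsilon_N}\in C_N(\bomega)\cap\{u:g_N(u,\bomega)\leq 0\}$ in hand, the paper takes $u_N(\bomega)$ to be the minimizer of $\|u-u^\star\|$ over that intersection, which is therefore within $\epsilon_N^{1/\alpha}$ of $u^\star$. To repair your proof, replace the projection of $v_\epsilon$ onto $C_N(\bomega)$ by $v_{\epsilon_N}$ itself and prove its membership in $C_N(\bomega)$ via \A{2}--\A{3}; your draft uses these assumptions only to obtain $\dHaus(C_N(\bomega),C_0)\to 0$ and never exploits the H\"older continuity of $h_0$ near $u^\star$, which is precisely what the equality-constraint relaxation $\delta_N$ is calibrated against.
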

\begin{proof}
Given $\bomega\in\bOmega$, we consider the sequence $\{u_N(\bomega)\}_{N\in\N}$ defined  as 
$$
u_N(\bomega)=\mathop{\arg\inf}% 
_{u\in (C_N(\bomega)\cap \{u\in\U:g_N(u,\bomega)\leq 0\})}\|u-u^\star\|.
$$ 
$\bProb$-almost-surely, there exists $N(\bomega)\in\N$ such that $u_N(\bomega)\neq\emptyset$ for all $N\geq N(\bomega)$ (i.e., the intersection of $C_N(\bomega)$ with $\{u\in\U:g_N(u,\bomega)\leq 0\}$ is not empty). 

To prove this claim, 
we let $\epsilon\in(0,\frac{1}{2})$ and define $\revThird{\beta_N}$ and $\epsilon_N$ as in \eqref{eq:eps_N_alpha_N} and $\delta_N$ as in \eqref{eq:delta_N}. Then, 
we consider the sequence $\{v_{\epsilon_N}\}_{N\in\N}$ that satisfies $g_0(v_{\epsilon_N})<0$ and $\|v_{\epsilon_N}-u^\star\|^\alpha<{\epsilon_N}$, which is well-defined thanks to \Afourb. 

Thanks to \Afoura, $\bProb$-a.s., $\lim_{N\to\infty}\sup_{u\in\U}\|g_N(u,\bomega)-g_0(u)\|=0$, so $\|g_N(v_{\epsilon_N},\bomega)-g_0(v_{\epsilon_N})\|\to 0$ as $N\to\infty$. Thus, $\bProb$-a.s., $g_N(v_{\epsilon_N},\bomega)\leq0$ for all $N>\hat{N}(\bomega)$ with $\hat{N}(\bomega)\in\N$ large enough. 
Also, since $h_0(u^\star)=0$ and thanks to 
\hyperref[A2]{(A2-3)}, Proposition \ref{prop:concent:unif_bounded}, and Jensen's inequality, with $\bProb$-probability at least $1-\revThird{\beta_N}$, 
$\|h_N(v_{\epsilon_N},\bomega)\| 
\leq
\|h_N(v_{\epsilon_N},\bomega)-h_0(v_{\epsilon_N})\|+\|h_0(v_{\epsilon_N})-h_0(u^\star)\| 
\leq
(1+\E[M(\revThird{\omega})])\epsilon_N
\leq
\delta_N$  
for all $N\geq\bar{N}$ with $\bar{N}\in\N$ large enough so that $(1+\E[M(\revThird{\omega})])\epsilon_N\leq\delta_N$ for all $N\geq\bar{N}$ (note that $\epsilon_N/\delta_N\to 0$ as $N\to\infty$).  
Next, we define the infeasibility event $B_N=\{\bomega\in\bOmega:\|h_N(v_{\epsilon_N},\bomega)\|>\delta_N\}$. By the above, $\bProb(B_N)\leq\revThird{\beta_N}$ for all $N\geq\bar{N}$. By the first Borel-Cantelli Lemma, $\bProb$-a.s., $B_N$ only happens finitely many times. 

Thus, $\bProb$-a.s.,  $u_N(\bomega)\neq\emptyset$ for all $N\geq N(\bomega)$ for $N(\bomega)\in\N$ large enough. 
Finally, $\|u_N(\bomega)-u^\star\|\leq\|v_{\epsilon_N}(\bomega)-u^\star\|\leq\epsilon_N$ for $N\geq N(\bomega)$ with $\lim_{N\to\infty}\epsilon_N=0$. 
\end{proof}

\begin{proof}[Proof of Theorem \ref{thm:main:ineq} if $g$ satisfies \Afour] % 
By \revThird{\A{1} and \hyperref[A2]{(A2-3)}},  $
\sup_{u\in\U}|f_N(u,\bar\omega)-f_0(u)|\to 0$ and $\dHaus(C_N(\bar\omega),C_0)\to 0$ as $N\to\infty$ \revThird{$\bProb$-a.s.}. 
Also, by \hyperref[A2]{(A2-4)} and Lemma  \ref{lemma:there_is_optimal_conv_seq:ineq}, $\bProb$-almost-surely, $\sprob_N(\bomega)$ is feasible for all $N\geq N(\bomega)$ with $N(\bomega)$ large enough.

The conclusion of the proof follows from the proof of Theorem \ref{thm:main:rand_sets} by replacing the use of Lemmas \ref{lemma:there_is_a_subseq} and \ref{lemma:there_is_optimal_conv_seq} with Lemmas \ref{lemma:there_is_a_subseq:ineq} and \ref{lemma:there_is_optimal_conv_seq:ineq}. 
\end{proof}

\section{Proving Propositions \ref{prop:concent:F_alphaHolder}-\ref{prop:concent:unif_bounded} with concentration inequalities}
\label{appdx:prop:concent:Fcentered_lip} 
Section \ref{sec:concentration} leverages concentration inequalities, namely McDiarmid's inequality and Dudley's entropy integral. 
In this section, we prove Propositions \ref{prop:concent:F_alphaHolder}-\ref{prop:concent:unif_bounded}. 

\subsection{Rademacher variables, Rademacher complexity, and symmetrization} To derive a uniform law of large numbers of the form of \eqref{eq:unif_gN_epsN_alphaN} for the function class $\cH$ in \eqref{eq:fclass:H}, we define 
the independent Rademacher variables\footnote{To define $(\bar\Omega,\bar\G,\bar\Prob)$ accounting for these random variables, one can start from the product space in Section \ref{sec:problem_formulation} and redefine $\bar\Omega\gets\bar\Omega\times\{-1,1\}^\N$ and similarly for $\bar\G$ and $\bProb$, see Appendix \ref{sec:product_space_socp}.} $\epsilon_i:\bar\Omega\to\{-1,1\}$ so that $\bar\Prob(\epsilon_i=1)=\bar\Prob(\epsilon_i=-1)=\frac{1}{2}$ (see, e.g., \cite{ShalevShwartz2009,wainwright_2019}) and the Rademacher complexity % 
\begin{equation}
\label{eq:Rademacher}
R_N(\cH)
=
\bar\E\left[
\sup_{h_u\in\cH}\left\|\frac{1}{N}\sum_{i=1}^N\epsilon_i h_u(\omega^i)\right\|
\right].
\end{equation}
By a standard symmetrization argument (see e.g. \cite[(4.17)-(4.18)]{wainwright_2019}), % 
\begin{equation}\label{eq:E_unif_RN}
\bar\E\left(
\sup_{h_u\in\cH}\left\|\frac{1}{N}\sum_{i=1}^Nh_u(\omega^i)-\E[h_u(\revThird{\omega})]\right\|
\right)
\leq 
2R_N(\cH).
\end{equation}
Combining \eqref{eq:E_unif_RN} with Markov's inequality gives \eqref{eq:Markov_and_RN} in Remark \ref{remark:unif_Markov}.

\subsection{Dudley's entropy integral}      Dudley's entropy integral \cite[Theorem 5.22]{wainwright_2019} is a concentration inequality exploiting the tight tails of sub-Gaussian processes and covering arguments via chaining. We recall basic concepts before stating the result.
 
\begin{definition}[Sub-Gaussian process]
Let $(\Omega,\G,\Prob)$ be a probability space and $(\Theta,\rho)$ be a metric space. An $\R$-valued stochastic process $\{X_\theta:\theta\in\Theta\}$ is sub-Gaussian with respect to the metric $\rho$ on $\Theta$ if 
$
\E\left[
\exp(\lambda(X_{\theta_1}-X_{\theta_2}))
\right]
\leq
\exp\left(
\lambda^2\rho(\theta_1,\theta_2)^2/2
\right)$ for all $\lambda\in\R$ and all $\theta_1,\theta_2\in\Theta$. 
\end{definition}
Sub-Gaussian random variables satisfy the following properties.
\begin{enumerate}[leftmargin=7mm]
\item If $X_1,\dots,X_N$ are independent $\sigma^2$-sub-Gaussian, $\sum_{i=1}^NX_i$ is $(N\sigma^2)$-sub-Gaussian.
\item If $X$ is $\sigma^2$-sub-Gaussian and $a>0$, then $aX$ is $(a^2\sigma^2)$-sub-Gaussian.
\item If $\underline{x}\leq X\leq \bar{x}$ almost surely and $\E[X]=0$, then $X$ is $\left((\bar{x}-\underline{x})^2/4\right)$-sub-Gaussian.
\end{enumerate} 

\begin{definition}[Covering] 
Let $(\Theta,\rho)$ be a metric space and $\epsilon>0$. 
A collection $\{\theta_1,\dots,\theta_N\}$ is called an $\epsilon$-cover if 
$
\min_{i}\rho(\theta,\theta_i)\leq\epsilon$ for all $\theta\in\Theta$. 
  The $\epsilon$-covering number of $(\Theta,\rho)$ is defined as
$ 
N(\Theta,\rho,\epsilon)=\inf\left\{
N\in\N:\exists\text{ an }\epsilon\text{-cover }\{\theta_1,\dots,\theta_N\}
\right\}
$.
\end{definition}
The covering number of any compact set $\Theta\subset\R^d$ satisfies 
\begin{equation}\label{eq:covering_2norm}
N(\Theta,\|\cdot\|,\epsilon)\leq \left(1+D/\epsilon\right)^d
\end{equation}
with $D=2\sup_{\theta\in\Theta}\|\theta\|$, see e.g. \cite[(5.9)]{wainwright_2019}. % 
Then, $N(\Theta,\|\cdot\|^\alpha,\epsilon^\alpha)\leq \big(1+D/\epsilon\big)^d$ for any $\alpha\in(0,1]$\footnote{\label{footnote:metric_alpha}Note that $\|\cdot\|^\alpha$ is a metric for any $\alpha\in(0,1]$. Indeed, (1) $\|u-v\|^\alpha=0\iff u,v=0$, (2) $\|u-v\|^\alpha=\|v-u\|^\alpha$, and (3) $\|u-w\|^\alpha\leq \|u-v\|^\alpha+\|v-w\|^\alpha$ since $(x+y)^\alpha\leq x^\alpha+y^\alpha$ for all $x,y\geq 0$ (proof: for $y$ fixed, consider $f(x)=(x+y)^\alpha-x^\alpha-y^\alpha$. Note that $f(0)=0$ and $f'(x)\leq 0$ for all $x,y\geq 0$ so $f(x)$ is nonincreasing. Thus, $f(x)\leq 0$ for all $x\geq 0$. }. Indeed, the $\epsilon$-cover $\{\theta_1,\dots,\theta_N\}$ for the Euclidean metric $\|\cdot\|$ that satisfies $\min\|\theta-\theta_i\|\leq\epsilon$ for all $\theta\in\Theta$ also satisfies $\min\|\theta-\theta_i\|^\alpha\leq\epsilon^\alpha$ for all $\theta\in\Theta$.

\begin{theorem}[Dudley's entropy integral]\label{thm:dudley} 
Let $(\Omega,\G,\Prob)$ be a probability space, 
$(\Theta,\rho)$ be a compact metric space,  
$\{X_\theta:\theta\in\Theta\}$ be an $\R$-valued sub-Gaussian process with respect to the metric $\rho$ on $\Theta$,  
$D=\sup_{\theta,\theta'\in\Theta}\rho(\theta,\theta')$, and 
$C=32$. Then, 
$$
\E\left[
\sup_{\theta\in\Theta}X_\theta
\right]
\leq
C\int_0^D\sqrt{\log(N(\Theta,\rho,u))}\dd u.
$$
\end{theorem}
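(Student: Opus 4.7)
The plan is to prove this classical result by a chaining argument, in which the sub-Gaussian process is approximated by projections onto a sequence of nested $\epsilon$-nets at dyadic resolutions, and the expected supremum is controlled by summing the expected maxima of increments across adjacent levels.

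First, I would fix a reference point $\theta_0 \in \Theta$ and note $\E[\sup_{\theta} X_\theta] = \E[\sup_{\theta}(X_\theta - X_{\theta_0})]$, where the sub-Gaussian assumption ensures $\E[X_\theta - X_{\theta_0}] = 0$. Set $\epsilon_k = 2^{-k}D$ for $k = 0, 1, 2, \ldots$, and let $\Theta_k \subset \Theta$ be a minimal $\epsilon_k$-cover with cardinality $N(\Theta,\rho,\epsilon_k)$, taking $\Theta_0 = \{\theta_0\}$ (valid since $D$ is the diameter). For each $\theta \in \Theta$, let $\pi_k(\theta) \in \Theta_k$ denote a nearest neighbor, so that $\rho(\theta, \pi_k(\theta)) \leq \epsilon_k$ and, by the triangle inequality, $\rho(\pi_k(\theta), \pi_{k-1}(\theta)) \leq 3 \cdot 2^{-k} D$.

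Second, I would use the telescoping identity $X_{\pi_K(\theta)} - X_{\theta_0} = \sum_{k=1}^{K}(X_{\pi_k(\theta)} - X_{\pi_{k-1}(\theta)})$ and take $K \to \infty$, exploiting separability of the compact metric space $\Theta$ and $L^2$ continuity of the process (implied by the sub-Gaussian condition) to interchange the limit with the supremum. The sub-Gaussian property guarantees that each increment $X_{\pi_k(\theta)} - X_{\pi_{k-1}(\theta)}$ has sub-Gaussian parameter at most $3 \cdot 2^{-k}D$. Since the pair $(\pi_k(\theta), \pi_{k-1}(\theta))$ ranges over at most $N(\Theta,\rho,\epsilon_k)^2$ distinct values as $\theta$ varies, the standard maximal bound $\E[\max_{i \leq M} Y_i] \leq \sigma\sqrt{2\log M}$ for $\sigma^2$-sub-Gaussian variables yields
\begin{equation*}
\E\Bigl[\sup_{\theta\in\Theta}\bigl(X_{\pi_k(\theta)} - X_{\pi_{k-1}(\theta)}\bigr)\Bigr]
\leq
3 \cdot 2^{-k}D \cdot 2\sqrt{\log N(\Theta,\rho,\epsilon_k)}.
\end{equation*}

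Third, summing over $k \geq 1$ and using monotonicity of $u \mapsto \sqrt{\log N(\Theta,\rho,u)}$ to bound each term by an integral on $[\epsilon_{k+1}, \epsilon_k]$, one obtains $\E[\sup_\theta X_\theta] \leq C' \int_0^D \sqrt{\log N(\Theta,\rho,u)}\,du$ for some absolute constant $C'$; careful bookkeeping of the constants $3$, $2$, and the dyadic-to-integral conversion factor yields the stated $C = 32$. The main technical obstacles will be (i) justifying the telescoping limit, which requires separability of $\Theta$ (ensured by compactness) so the sup over $\Theta$ coincides with a sup over a countable dense subset, and (ii) tracking constants tightly, since standard textbook proofs use slightly different dyadic schemes and typically yield $C \in \{8, 12, 24, 32\}$ depending on whether one telescopes from $\theta_0$ or between pairs, and on whether one applies the max-of-$M$ bound with $M = N^2$ or $M = N$.
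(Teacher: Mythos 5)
The paper does not actually prove Theorem \ref{thm:dudley}: it is stated as a quoted classical result with a pointer to \cite[Theorem 5.22]{wainwright_2019}, so there is no in-paper argument to compare against. Your proposal is the standard dyadic chaining proof of that cited result, and it is essentially correct: the net radii $\epsilon_k=2^{-k}D$, the increment bound $\rho(\pi_k(\theta),\pi_{k-1}(\theta))\leq 3\cdot 2^{-k}D$, the count of at most $N(\Theta,\rho,\epsilon_k)^2$ increment pairs, the maximal inequality $\E[\max_{i\leq M}Y_i]\leq\sigma\sqrt{2\log M}$, and the sum-to-integral conversion via monotonicity of $u\mapsto N(\Theta,\rho,u)$ all fit together as you describe; in fact your scheme yields a constant around $12$, which implies the stated bound with $C=32$ a fortiori. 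Two points deserve tightening. First, your opening identity $\E[\sup_\theta X_\theta]=\E[\sup_\theta(X_\theta-X_{\theta_0})]$ needs $\E[X_{\theta_0}]=0$; the sub-Gaussian condition in the paper only centers the \emph{increments}, so strictly speaking the theorem as stated requires the process to be centered (as in Wainwright's version, and as holds in the paper's application where $Z^j_{u_0}\equiv 0$). Second, the passage $K\to\infty$ is cleanest not via an interchange of limit and supremum but by first proving the bound for an arbitrary finite subset $T\subset\Theta$ (where the chain terminates exactly once $\epsilon_K$ is below the minimum separation of $T$, so $\pi_K=\mathrm{id}$ on $T$) and then taking the supremum over finite subsets of a countable dense set, using monotone convergence; your appeal to $L^2$-continuity gestures at this but does not quite close it. Neither issue affects the validity of the overall argument.
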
 
Theorem \ref{thm:dudley} allows obtaining bounds on $\E\left[\sup_{\theta\in\Theta}\left|X_\theta\right|\right]$ for stochastic processes $X_\theta$ that are symmetric (i.e., $X_\theta$ has the same distribution as $-X_\theta$):
\begin{align}\label{eq:supXtheta_abs}
\E\left[\sup_{\theta\in\Theta}
|X_\theta|\right]
&\leq
2\E\left[\sup_{\theta\in\Theta}X_\theta\right] 
+
\inf_{\tilde\theta\in\Theta}\E\left[|X_{\tilde\theta}|\right].
\end{align}
Indeed, by the triangle inequality\footnote{By the triangle inequality, we have $\sup_\theta|X_\theta|=\sup_\theta\inf_{\tilde\theta}|X_\theta|\leq\sup_\theta\inf_{\tilde\theta}(|X_\theta-X_{\tilde\theta}|+|X_{\tilde\theta}|)
\leq
\sup_\theta\inf_{\tilde\theta}(\sup_{\tilde\theta}(|X_\theta-X_{\tilde\theta}|)+|X_{\tilde\theta}|)
\leq
\sup_{\theta,\tilde\theta}|X_\theta-X_{\tilde\theta}|+\inf_{\tilde\theta}|X_{\tilde\theta}|$.}, if $X_\theta$ is a symmetric process, 
$\E[\sup_{\theta\in\Theta}
|X_\theta|]
\leq
\E[\sup_{\theta,\tilde\theta\in\Theta}|X_\theta-X_{\tilde\theta}|]+
\inf_{\tilde\theta\in\Theta}\E[|X_{\tilde\theta}|]
=
2\E[\sup_{\theta\in\Theta}X_\theta] 
+
\inf_{\tilde\theta\in\Theta}\E[|X_{\tilde\theta}|]$, 
see \cite[Lemma 2.2.1]{Talagrand2014}. 
The first term in \eqref{eq:supXtheta_abs} can be bounded using Theorem \ref{thm:dudley}.

\subsection{Proof of Proposition \ref{prop:concent:F_alphaHolder}}\label{sec:proof:prop:concent:F_alphaHolder}
We first prove a concentration result for centered smooth function classes. We then use this result to prove  Proposition \ref{prop:concent:F_alphaHolder}. 
\begin{proposition}[Concentration for centered $\alpha$-H\"older function class]\label{prop:concent:Fcentered_lip:alpha}
With the notations and assumptions of Proposition \ref{prop:concent:F_alphaHolder}, define the centered function class $\tilde\cH=\{h(u,\cdot)-h(u_0,\cdot)\}_{u\in\U}$. Then, 
$$
\bar\E\left[\sup_{h_u\in\tilde\cH}\left\|\frac{1}{N}\sum_{i=1}^Nh_u(\omega^i)-\E[h_u(\revThird{\omega})]\right\|\right]
\leq 
\frac{1}{\sqrt{N}}
\frac{8CD^{\frac{\alpha+1}{2}}d^{\frac{1}{2}}n^{\frac{3}{2}}}{\alpha^{\frac{1}{2}}}
\sqrt{\E\left[M^2\right]}.
$$
\end{proposition}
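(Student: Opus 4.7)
My plan is to follow a standard symmetrization/Dudley template, taking care to exploit the $\R^n$-valued structure of $\tilde\cH$ and the fact that \A{2} only provides $\alpha$-H\"older (rather than Lipschitz) regularity. First, applying \eqref{eq:E_unif_RN} reduces the left-hand side to $2R_N(\tilde\cH)$. Then, using $\|v\|_2\le\sqrt n\,\|v\|_\infty\le\sqrt n\sum_{j=1}^n|v_j|$ inside the Rademacher supremum, I split the $\R^n$-valued Rademacher complexity into $\sqrt n$ times a sum of $n$ scalar Rademacher complexities, one per component. This is the source of the $n^{3/2}$ factor in the statement. Because every element of $\tilde\cH$ vanishes at $u=u_0$, each scalar process $X^j_u$ below will satisfy $X^j_{u_0}=0$, which pairs well with \eqref{eq:supXtheta_abs} and lets me bound $\bar\E[\sup_u|X^j_u|]$ without paying a separate marginal-variance term.

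Next, I condition on the sample $(\omega^1,\dots,\omega^N)$ and view $X^j_u=\frac{1}{N}\sum_i\epsilon_i(h_j(u,\omega^i)-h_j(u_0,\omega^i))$ as a Rademacher sum whose coefficients are bounded, via \A{2}, by $|h_j(u,\omega^i)-h_j(v,\omega^i)|\le M(\omega^i)\|u-v\|_2^\alpha$. By the standard sub-Gaussian property of Rademacher sums recalled after the definition of a sub-Gaussian process, $X^j_u$ is then sub-Gaussian with respect to the data-dependent pseudometric $\rho_N(u,v)=\bar{M}_N\|u-v\|_2^\alpha/\sqrt N$, where $\bar{M}_N^2=\frac{1}{N}\sum_i M(\omega^i)^2$. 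The diameter of $\U$ under $\rho_N$ equals $\bar{M}_N D^\alpha/\sqrt N$.

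Third, I apply Dudley's entropy integral (Theorem \ref{thm:dudley}) conditionally on the sample. The covering bound \eqref{eq:covering_2norm}, together with the observation in footnote \ref{footnote:metric_alpha}, yields $N(\U,\rho_N,r)\le(1+D/(r\sqrt N/\bar{M}_N)^{1/\alpha})^d$. After a change of variables back to the Euclidean scale, the entropy integral becomes a multiple of $\int_0^D t^{\alpha-1}\sqrt{d\log(1+D/t)}\,dt$; a Cauchy-Schwarz split of this integral (against $t^{\alpha-1}$) extracts the $\alpha^{-1/2}$ and $D^{(\alpha+1)/2}$ factors up to an absolute constant. I then uncondition over $(\omega^i)$ and apply Jensen's inequality to obtain $\bar\E[\bar{M}_N]\le\sqrt{\E[M^2]}$. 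Collecting the $\sqrt n\cdot n$ prefactor from Step 1 produces the $n^{3/2}$ factor, and $C=32$ enters from Dudley's bound.

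The main obstacle is Step 3: bookkeeping Dudley's integral under the $\alpha$-H\"older metric and performing the Cauchy-Schwarz split carefully enough to pull out both $\alpha^{-1/2}$ and $D^{(\alpha+1)/2}$ without picking up spurious logarithmic factors. In the Lipschitz case $\alpha=1$ this reduces to a classical computation; the novelty here is making it work uniformly for $\alpha\in(0,1]$, which is precisely what is needed for the stochastic control application in which $u\mapsto x_u(t,\omega)$ is only H\"older continuous, by \eqref{eq:xu_lip}.
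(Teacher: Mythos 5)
Your proposal follows essentially the same route as the paper's proof: symmetrization to the Rademacher complexity, a componentwise reduction producing the $n^{3/2}$ factor, conditional sub-Gaussianity of the Rademacher sum with respect to the empirical H\"older pseudometric proportional to $\bar M_N\|u-v\|_2^\alpha$, Dudley's entropy integral with the covering bound for $(\U,\|\cdot\|_2^\alpha)$, the symmetric-process inequality \eqref{eq:supXtheta_abs} anchored at $X^j_{u_0}=0$ so that no marginal-variance term is paid, and Jensen's inequality to convert $\bar\E[\bar M_N]$ into $\sqrt{\E[M^2]}$. The one substantive deviation is the final evaluation of the entropy integral: your Cauchy--Schwarz split of $\int_0^D t^{\alpha-1}\sqrt{d\log(1+D/t)}\,\dd t$ against $t^{\alpha-1}$ yields a factor of order $D^{\alpha}\alpha^{-1/2}$, not the $D^{\frac{\alpha+1}{2}}\alpha^{-1/2}$ appearing in the statement; the paper instead bounds $\log(1+D^\alpha/u)\le D^\alpha/u$ inside the square root and integrates $u^{-1/2}$ over $(0,D)$, which produces $2D^{\frac{\alpha+1}{2}}\alpha^{-1/2}$ exactly. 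The two expressions agree only at $\alpha=1$ (and for $D<1$ yours is the weaker one), so to recover the proposition's stated constant you should replace the Cauchy--Schwarz step by this direct bound; everything else in your outline is sound.
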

\begin{proof}
First, we denote $h(u,\omega)=(h^1(u,\omega),\dots,h^n(u,\omega))\in\R^n$ and define the dimension-wise centered function classes $\tilde\cH^j=\{h^j(u,\cdot)-h^j(u_0,\cdot):\Omega\to\R,
\ u\in\U\}$ with $j=1,\dots,n$. 
We note that any $h_u^j\in\tilde\cH^j$ satisfies \A{2} with $\alpha$-H\"older constant 
\begin{equation}\label{eq:Mj_sqrtn_M}
M_j(\omega)=\sqrt{n}M(\omega)
\end{equation}
since $\|a\|_1\leq\sqrt{n}\|a\|$ for any $a\in\R^n$ and 
$
|h^j(u_1,\omega)-h^j(u_2,\omega)|
\leq 
\|h(u_1,\omega)-h(u_2,\omega)\|_1
\leq
\sqrt{n}
\|h(u_1,\omega)-h(u_2,\omega)\|.
$ 
Since $\|a\|\leq\|a\|_1$ for any $a\in\R^n$,  
\begin{equation}\label{eq:dimensionwise}
\bar\E\left[\sup_{h_u\in\tilde\cH}\left\|\frac{1}{N}\sum_{i=1}^Nh_u(\omega^i)-\E[h_u]\right\|\right]
\leq
\sum_{j=1}^n
\bar\E\left[\sup_{h_u^j\in\tilde\cH^j}\left|\frac{1}{N}\sum_{i=1}^Nh_u^j(\omega^i)-\E[h_u^j]\right|\right]
.\end{equation}
Next, we fix $j=1,\dots,n$ and bound the right-hand-side term above dimension-wise. 
We define the $\R$-valued centered process $Z_u^j$ and the empirical expectation $\E_N[M_j^2]$:
$$
Z_u^j\triangleq \frac{1}{\sqrt{N}}\sum_{i=1}^N\epsilon_i(h^j(u,\omega^i)-h^j(u_0,\omega^i)),
\qquad
\E_N[M_j^2]\triangleq \frac{1}{N}\sum_{i=1}^NM_j(\omega^i)^2.
$$
When conditioned on $\bomega$, the process $\{Z_u^j:u\in\U\}$ is $\rho^2$-sub-gaussian for $\rho(u_1,u_2)^2=\E_N[M_j^2]\|u_1-u_2\|^{2\alpha}$. 
Indeed, the Rademacher variables $\epsilon_i$ take values in $\{-1,1\}$ with equal probability, so they are $1$-sub-Gaussian. Thus, $\epsilon_i a$ is $a^2$-sub-Gaussian for any $a\geq 0$, and since the maps $u\mapsto h^j(u,\omega^i)$ satisfy \A{2}, 
{\small
\begin{align*}
\bar\E\left[\exp(\lambda(Z_{u_1}^j-Z_{u_2}^j))
\ \big|\ 
\bomega \right]
&=
\bar\E\bigg[\exp\bigg(\frac{\lambda}{\sqrt{N}}\sum_{i=1}^N\epsilon_i(h^j(u_1,\omega^i)-h^j(u_2,\omega^i)\bigg)
\ \bigg|\ 
\bomega \bigg]
\\
&\hspace{-35mm}\leq
\exp\left(\frac{\lambda^2}{2N}\sum_{i=1}^N
(h^j(u_1,\omega^i)-h^j(u_2,\omega^i))^2
\right)
\mathop{\leq}^{\text{\A{2}}}
\exp\left(\frac{\lambda^2}{2}\left(\frac{1}{N}\sum_{i=1}^NM_j(\omega^i)^2\right)\|u_1-u_2\|^{2\alpha}
\right)
\end{align*}
}% 
where $\bar\E\left[A\, |\, \bomega \right]$ denotes the conditional expectation of the random variable $A$ given $\bomega=(\omega^1,\dots)$, i.e., the expected value of $A$ over the Rademacher variables $\epsilon_i$. % 

In addition, $Z_u^j$ is a symmetric process, i.e., $Z_u^j$ has the same distribution as $-Z_u^j$ when conditioned on $\bomega$. % 
Thus, if we take the expectation over the Rademacher variables $\epsilon_i$, since $Z^j_{u_0}=0$, we obtain  
\begin{align}\label{eq:absolute_symmetric}
\bar\E\left[\sup_{u\in\U}
|Z_u^j|
\,\Big|\,\bomega
\right]
&\mathop{\leq}^{\eqref{eq:supXtheta_abs}}
2\bar\E\left[\sup_{u\in\U}Z_u^j
\,\Big|\,\bomega
\right]
+
\inf_{\tilde{u}\in\U}\bar\E\left[|Z_{\tilde{u}}^j|
\,\Big|\,\bomega
\right]
=
2\bar\E\left[\sup_{u\in\U}Z_u^j
\,\Big|\,\bomega
\right].
\end{align}
Moreover, $Z_u^j/\sqrt{\E_N[M_j^2]}$ is $\|u_1-u_2\|^{2\alpha}$-sub-Gaussian when conditioned on $\bomega$. Thus, we apply  Dudley's entropy integral (Theorem \ref{thm:dudley}) and obtain 
{\small
\begin{align*}
\bar\E\left[\sup_{u\in\U}|Z_u^j|\ \Big|\ \bomega\right]
&\mathop{\leq}^{\eqref{eq:absolute_symmetric}}
2\bar\E\left[\sup_{u\in\U}Z_u^j\ \Big|\ \bomega\right]
=
2\sqrt{\E_N[M_j^2]}
\bar\E\left[\sup_{u\in\U}\frac{Z_u^j}{\sqrt{\E_N[M_j^2]}}\ \Bigg|\ \bomega\right] 
\\
&\hspace{-18mm}\leq
2C\E_N[M_j^2]^{\frac{1}{2}}
\int_0^D\sqrt{\log N(\U,\|\cdot\|^{\alpha},u)}\dd u
\hspace{3mm}\mathop{\leq}^{\eqref{eq:covering_2norm}}
2C\E_N[M_j^2]^{\frac{1}{2}}
\int_0^D\sqrt{
    d\log\left(1{+}\frac{D}{u^{\frac{1}{\alpha}}}\right)
    }\dd u
\\
&\hspace{-18mm}\leq
2C\E_N[M_j^2]^{\frac{1}{2}}
\int_0^D
\sqrt{
d\log
	\left(\left(1{+}
		\frac{D^\alpha}{u}
		\right)^{\frac{1}{\alpha}}\right)
	}\dd u
=
2C\E_N[M_j^2]^{\frac{1}{2}}
\int_0^D
\sqrt{d\frac{1}{\alpha}\log
	\left(
	1{+}
		    \frac{D^\alpha}{u}
	\right)
	}\dd u
\\
&\hspace{-18mm}\leq
2C\E_N[M_j^2]^{\frac{1}{2}}
\int_0^D\sqrt{
    \frac{dD^\alpha}{\alpha u}}\dd u
\hspace{22mm}=
\frac{4CD^{\frac{\alpha+1}{2}}d^{\frac{1}{2}}}{\alpha^{\frac{1}{2}}}
\E_N[M_j^2]^{\frac{1}{2}}
\end{align*}
}% 
since $\left(1+\frac{D}{u^{1/\alpha}}\right)^\alpha\leq 1+\left(\frac{D}{u^{1/\alpha}}\right)^\alpha$ (see footnote \ref{footnote:metric_alpha}) and $\log(1+t)\leq t$ for $t>-1$. 
Finally, % 
by Jensen's inequality (J) and since $M_j(\omega)=\sqrt{n}M(\omega)$ \eqref{eq:Mj_sqrtn_M},  
{\small
\begin{align*}
\bar\E\left[\sup_{h_u\in\tilde\cH}\left\|\frac{1}{N}\sum_{i=1}^Nh_u(\omega^i)-\E[h_u(\revThird{\omega})]\right\|\right]
&
\mathop{\leq}^{\eqref{eq:dimensionwise}}
\sum_{j=1}^n
\bar\E\left[\sup_{h_u^j\in\tilde\cH^j}\left|\frac{1}{N}\sum_{i=1}^Nh_u^j(\omega^i)-\E[h_u^j]\right|\right]
\\
&\hspace{-40mm}\mathop{\leq}^{\eqref{eq:E_unif_RN}}
\sum_{j=1}^n
2R_N(\tilde\cH^j)
=
\sum_{j=1}^n
\frac{2}{\sqrt{N}}
\bar\E\left[\sup_{u\in\U}|Z_u^j| \right] 
=
\sum_{j=1}^n
\frac{2}{\sqrt{N}}
\bar\E\left[\bar\E\left[\sup_{u\in\U}|Z_u^j| \ \Big|\ \bomega \right]
\right]
\\
&\hspace{-39mm}\leq
\sum_{j=1}^n
\frac{2}{\sqrt{N}}
\frac{4CD^{\frac{\alpha+1}{2}}d^{\frac{1}{2}}}{\alpha^{\frac{1}{2}}}
\bar\E\left[\sqrt{\E_N[M_j^2]}\right]
\mathop\leq^{\text{(J)}}
\frac{1}{\sqrt{N}}
\sum_{j=1}^n
\frac{8CD^{\frac{\alpha+1}{2}}d^{\frac{1}{2}}}{\alpha^{\frac{1}{2}}}
\sqrt{\bar\E\left[\E_N[M_j^2]\right]}
\\
&\hspace{-40mm}\mathop{=}^{\eqref{eq:Mj_sqrtn_M}}
\frac{1}{\sqrt{N}}
\frac{8CD^{\frac{\alpha+1}{2}}d^{\frac{1}{2}}n^{\frac{3}{2}}}{\alpha^{\frac{1}{2}}}
\sqrt{\E\left[M^2\right]}
\end{align*} 
}% 
which concludes the proof.
\end{proof}

Proposition \ref{prop:concent:Fcentered_lip:alpha} treats the centered function class $\tilde\cH=\{h(u,\cdot)-h(u_0,\cdot)\}_{u\in\U}$. 
Proposition \ref{prop:concent:F_alphaHolder} follows by extending this result to the function class $\cH$  % 
in \eqref{eq:fclass:H}.

\begin{proof}[Proof of Proposition \ref{prop:concent:F_alphaHolder}]
By the triangle inequality,
{\small
\begin{align*}
\left\|\frac{1}{N}\sum_{i=1}^Nh(u,\omega^i)-\E[h(u,\revThird{\omega})]\right\|
&\leq
\left\|\frac{1}{N}\sum_{i=1}^N(h(u,\omega^i){-}h(u_0,\omega^i))-\E[(h(u,\revThird{\omega}){-}h(u_0,\revThird{\omega}))]\right\|
\\
&\qquad
+
\left\|\frac{1}{N}\sum_{i=1}^Nh(u_0,\omega^i)-\E[h(u_0,\revThird{\omega})]\right\|.
\end{align*}
}% 
With the centered function class $\tilde\cH=\{h(u,\cdot)-h(u_0,\cdot)\}_{u\in\U}$ defined in Proposition  \ref{prop:concent:Fcentered_lip:alpha} and the function class $\cH=\{h(u,\cdot)\}_{u\in\U}$ defined 
in \eqref{eq:fclass:H}, 
we obtain
{\small
\begin{align*}
\sup_{h_u\in\cH}\left\|\frac{1}{N}\sum_{i=1}^Nh_u(\omega^i)-\E[h_u]\right\|
&\leq
\sup_{\tilde{h}_u\in\tilde\cH}\left\|\frac{1}{N}\sum_{i=1}^N\tilde{h}_u(\omega^i)-\E[\tilde{h}_u]\right\|
+
\left\|\frac{1}{N}\sum_{i=1}^Nh_{u_0}(\omega^i)-\E[h_{u_0}]\right\|.
\end{align*}
}% 
where we write $h_{u_0}=h(u_0,\cdot)\in\cH$. 

Proposition  \ref{prop:concent:Fcentered_lip:alpha} bounds the expected value of the first term.  

To bound the expected value  of the second, by Jensen's inequality,
{\small
\begin{align*}
\bar\E\left[\left\|
\frac{1}{N}\sum_{i=1}^Nh(u_0,\omega^i)-\E[h(u_0,\revThird{\omega})]
\right\|\right]
&\leq
\bar\E\left[\left\|
\frac{1}{N}\sum_{i=1}^Nh(u_0,\omega^i)-\E[h(u_0,\revThird{\omega})]
\right\|^2\right]^{\frac{1}{2}}
\\
&\hspace{-48mm}=
\text{Trace}\left(
\text{Cov}\left(\frac{1}{N}\sum_{i=1}^Nh(u_0,\omega^i)\right)
\right)^{\frac{1}{2}}
=
\text{Trace}\left(
\frac{1}{N}\text{Cov}\left(h(u_0,\cdot)\right)
\right)^{\frac{1}{2}}
=
\frac{1}{\sqrt{N}}
\text{Trace}\left(\Sigma_0\right)^{\frac{1}{2}}.
\end{align*}
}% 
Combining Proposition \ref{prop:concent:Fcentered_lip:alpha} and this bound gives the result.
\end{proof}

\subsection{Proof of Proposition \ref{prop:concent:unif_bounded}}\label{sec:proof:prop:concent:unif_bounded}
We use Corollary \ref{cor:concent:unif_bounded} and Proposition \ref{prop:concent:F_alphaHolder}. 
\begin{proof}[Proof of Proposition \ref{prop:concent:unif_bounded}]
By Corollary \ref{cor:concent:unif_bounded}, for any $\delta>0$, 
with $\bProb$-probability less than $\exp\left(-\frac{N\delta^2}{2\bar{h}^2}\right)$, 
$$
\sup_{u\in\U}\left\|\frac{1}{N}\sum_{i=1}^Nh(u,\omega^i)-\E[h(u,\revThird{\omega})]\right\|
\geq
\bar\E\left[\sup_{u\in\U}\left\|\frac{1}{N}\sum_{i=1}^Nh(u,\omega^i)-\E[h(u,\revThird{\omega})]\right\|\right]
+\delta.
$$
Also, note that the  boundedness of $h(u_0,\omega)$ in \A{3} implies that $\Sigma_0$ is finite. Then, 
$$
\bar\E\left[
\sup_{u\in\U}\left\|\frac{1}{N}\sum_{i=1}^Nh(u,\omega^i)-\E[h(u,\revThird{\omega})]\right\|\right]
\leq 
\frac{1}{\sqrt{N}}\tilde{C}
$$ 
by Proposition \ref{prop:concent:F_alphaHolder}. 
Thus, combining the previous two results,
\begin{equation}\label{eq:prop:concent:unif_bounded:as_fn_of_delta}
\bar\Prob\left(
\sup_{u\in\U}\left\|\frac{1}{N}\sum_{i=1}^Nh(u,\omega^i)-\E[h(u,\revThird{\omega})]\right\|
\geq
\frac{\tilde{C}}{\sqrt{N}}+\delta
\right)
\leq 
\exp\left(-\frac{N\delta^2}{2\bar{h}^2}\right).
\end{equation}
Let $\epsilon>0$ and $\delta=\tilde{C}N^{-\frac{1}{2}+\frac{\epsilon}{2}}$. Then, $\left(\tilde{C}N^{-\frac{1}{2}}+\tilde{C}N^{-\frac{1}{2}+\frac{\epsilon}{2}}\right)\leq 2\tilde{C}N^{-\frac{1}{2}+\frac{\epsilon}{2}}$, so
\begin{equation*}
\bar\Prob\left(
\sup_{u\in\U}\left\|\frac{1}{N}\sum_{i=1}^Nh(u,\omega^i)-\E[h(u,\revThird{\omega})]\right\|
\geq
2\tilde{C}N^{-\frac{1}{2}+\frac{\epsilon}{2}}
\right)
\leq 
\exp\left(-\frac{\tilde{C}^2}{2\bar{h}^2}N^\epsilon\right).
\end{equation*}
Let $\epsilon_N=2\tilde{C}N^{-\frac{1}{2}+\frac{\epsilon}{2}}$ and $\revThird{\beta_N}=\exp\left(-\frac{\tilde{C}^2}{2\bar{h}^2}N^\epsilon\right)$ and the conclusion follows.
\end{proof}

\begin{proof}[Proof of Corollary \ref{cor:prop:concent:unif_bounded}]
\revThird{The result follows from the proof of Proposition \ref{prop:concent:unif_bounded} by defining $\delta=(2\bar{h}^2\log(1/\beta)/N)^{\frac{1}{2}}$ in \eqref{eq:prop:concent:unif_bounded:as_fn_of_delta}.
}
\end{proof}
\section{Continuity of state trajectories with respect to control input}\label{sec:xu_continuous}
Let $u,v\in \U$ and denote by $x^u$ and $x^v$ the associated trajectories that satisfy the SDE \eqref{eq:SDE}. For any $t\in[0,T]$, we write $\Delta b^{u,v}_t=b(x_t^u,u_t)-b(x_t^v,v_t)$ and  $\Delta\sigma^{u,v}_t=\sigma(x_t^u,u_t)-\sigma(x_t^v,v_t)$. Then, for any $t\in[0,T]$ and $p\geq 2$, we have
{\small
\begin{align*}
\E\left[\sup_{0\leq s\leq t}\|x_s^u-x_s^v\|^p\right]
&=
\E\bigg[
\sup_{0\leq s\leq t}
\bigg\|
\int_0^s
\Delta b^{u,v}_r\dd r
+
\int_0^s
\Delta\sigma^{u,v}_r\dd W_r
\bigg\|^p
\bigg]
\\
&\leq 
C_p\bigg(
\E\bigg[\sup_{0\leq s\leq t}
	\bigg\|
	\int_0^s
	\Delta b^{u,v}_r\dd r
	\bigg\|^p
\bigg]
+
\E\bigg[\sup_{0\leq s\leq t}
	\bigg\|
	\int_0^s
	\Delta\sigma^{u,v}_r\dd W_r
	\bigg\|^p
\bigg]
\bigg)
\end{align*}
}% 
for some constant $C_p$. 
Then, by H\"older's inequality,
{\small
\begin{align*}
\E\bigg[\sup_{0\leq s\leq t}
	\bigg\|
	\int_0^s
	\Delta b^{u,v}_r\dd r
	\bigg\|^p
\bigg] 
\leq
\E\bigg[
	\bigg(
	\int_0^t
	\|\Delta b^{u,v}_r\|\dd r
	\bigg)^p
\bigg] 
\leq
t^{p-1}
\E\bigg[
	\int_0^t
	\|\Delta b^{u,v}_r\|^p\dd r
\bigg] 
\end{align*}
}% 
and by the Burkholder-Davis-Gundy \cite[Theorem 5.16]{LeGall2016} and H\"older inequalities,
{\small
\begin{align*}
\E\bigg[\sup_{0\leq s\leq t}
	\bigg\|
	\int_0^s
	\Delta\sigma^{u,v}_r\dd W_r
	\bigg\|^p
\bigg]
&\leq
C_2
\E\bigg[\left\langle
	\int_0^\cdot
	\Delta\sigma^{u,v}_r\dd W_r,
	\int_0^\cdot
	\Delta\sigma^{u,v}_r\dd W_r
	\right\rangle_t^{p/2}
\bigg]
\\
&=
C_2
\E\bigg[\left(
	\int_0^t
	\|\Delta\sigma^{u,v}_r\|^2\dd r\right)^{p/2}
\bigg]
=C_2t^{\frac{p}{2}-1}
\E\bigg[
	\int_0^t
	\|\Delta\sigma^{u,v}_r\|^p\dd r
\bigg].
\end{align*}
}% 
Thus, since $t\in[0,T]$, we obtain for some constants $C_{p,T}$ and $C_{p,T,K}$ that
{\small
\begin{align*}
\E\left[\sup_{0\leq s\leq t}\|x_s^u-x_s^v\|^p\right]
&\leq
C_{p,T}\bigg(
\E\bigg[
	\int_0^t
	\|\Delta b^{u,v}_r\|^p\dd r
\bigg] 
+
\E\bigg[
	\int_0^t
	\|\Delta\sigma^{u,v}_r\|^p\dd r
\bigg]
\bigg)
\\
&\leq
C_{p,T,K}\bigg(
\E\bigg[
	\int_0^t
	\|x_r^u-x_r^v\|^p\dd r
\bigg] 
+
	\int_0^t
	\|u_r-v_r\|^p\dd r
\bigg)
\\
&\leq
C_{p,T,K}\bigg(
\E\bigg[
	\int_0^t
	\sup_{0\leq r' \leq r}
	\|x_{r'}^u-x_{r'}^v\|^p\dd r
\bigg] 
+
	\int_0^t
	\|u_r-v_r\|^p\dd r
\bigg)
\end{align*}
}% 
where the second inequality follows from \ALipschitz. 
Thus, by Gr\"onwall's inequality,
{\small
\begin{align*}
\E\left[\sup_{0\leq s\leq t}\|x_s^u-x_s^v\|^p\right]
&\leq 
C_{p,T,K}
\left(
\int_0^t\|u_r-v_r\|^p\dd r
\right)
\end{align*}
}% 
for a new constant $C_{p,T,K}$ and all $t\in[0,T]$. % 
Thus, for $p=2$ and $t=T$,
{\small
\begin{align*}
\E\left[\sup_{0\leq s\leq T}\|x_s^u-x_s^v\|^p\right]
&\leq 
C_{T,K}
\left(\bigg(
\int_0^T\|u_s-v_s\|^2\dd s
\bigg)^\frac{1}{2}
\right)^{1+\beta}
\end{align*}
}% 
for $\beta=1$. 
Then, since $\U$ can be identified with a (compact) subset of $\R^{Sm}$, we apply Kolmogorov's Lemma \cite[Theorem 2.9]{LeGall2016} and obtain that there exists a modification $\tilde{x}^u$ of $x^u$ whose sample paths are $\alpha$-H\"older continuous for every $\alpha\in(0,\frac{\beta}{p})=(0,\frac{1}{2})$. Thus, for every $\alpha\in(0,\frac{1}{2})$,  $\Prob$-almost-surely, there exists a finite constant $C_\alpha(\omega)$ with
{\small
\begin{align*}
\sup_{0\leq t\leq T}\|\tilde{x}_t^u(\omega)-\tilde{x}_t^v(\omega)\|
&\leq 
C_\alpha(\omega)
\left(\bigg(
\int_0^T\|u_s-v_s\|^2\dd s
\bigg)^{\frac{1}{2}}\right)^\alpha.
\end{align*}
}% 
This shows that up to a modification,  $\Prob$-almost-surely, the map 
$
x_\cdot(\cdot,\omega):\U\to C([0,T];\R^n):\, u\mapsto x_u(\cdot,\omega)
$ 
is continuous, with the metric topology on $C([0,T];\R^n)$ induced by the norm $\|x\|_\infty=\sup_{0\leq t\leq T}\|x(t)\|$ and the metric topology on $\U\cong U^S\subset\R^{Sm}$ for $\|u\|=\big(\int_0^Tu^2(s)\dd s\big)^{\frac{1}{2}}$.

To show that $\E[C_\alpha^2(\revThird{\omega})]<\infty$, by adopting the notation and results introduced in the proof of \cite[Theorem 2.9]{LeGall2016} (the variables $(q,\epsilon)$ in \cite{LeGall2016} and in the proof below evaluate to $(2,1)$ in this work), it suffices to prove that
$
\mathbb{E}[ K^q_{\alpha}(\revThird{\omega}) ] < \infty
$, 
where
{\small
$$
K_{\alpha}(\omega) \triangleq \underset{n \in \mathbb{N}}{\sup} \left( \underset{1 \le i \le 2^n}{\max} \ 2^{\alpha n} d\left( X_{\frac{i-1}{2^n}}(\omega) , X_{\frac{i}{2^n}}(\omega) \right) \right) .
$$
}% 
To prove that $\E[K_\alpha^q(\revThird{\omega})]<\infty$, thanks to the monotone convergence theorem, we write
{\footnotesize
\begin{align*}
    \mathbb{E}[ K^q_{\alpha} ] &= \mathbb{E}\left[ \left( \underset{n \in \mathbb{N}}{\sup} \left( \underset{1 \le i \le 2^n}{\max} \ 2^{\alpha n} d\left( X_{\frac{i-1}{2^n}} , X_{\frac{i}{2^n}} \right) \right) \right)^q \right] 
    \le \mathbb{E}\left[ \sum_{n \in \mathbb{N}} \left( \underset{1 \le i \le 2^n}{\max} \ 2^{\alpha n} d\left( X_{\frac{i-1}{2^n}} , X_{\frac{i}{2^n}} \right) \right)^q \right] 
    \\
    &= \sum_{n \in \mathbb{N}} \mathbb{E}\left[ \left( \underset{1 \le i \le 2^n}{\max} \ 2^{\alpha n} d\left( X_{\frac{i-1}{2^n}} , X_{\frac{i}{2^n}} \right) \right)^q \right] 
    \le \sum_{n \in \mathbb{N}} 2^{q \alpha n} \sum^{2^n}_{i=1} \mathbb{E}\left[ d\left( X_{\frac{i-1}{2^n}} , X_{\frac{i}{2^n}} \right)^q \right] \\
    &\le \sum_{n \in \mathbb{N}} 2^{q \alpha n} \sum^{2^n}_{i=1} \left( \frac{1}{2^n} \right)^{1 + \varepsilon} 
    = \sum_{n \in \mathbb{N}} \left( \frac{1}{2^{\varepsilon - q \alpha}} \right)^n = \frac{2^{\varepsilon - q \alpha}}{2^{\varepsilon - q \alpha} - 1} < \infty
\end{align*}
}% 
given that $\varepsilon - q \alpha > 0$.  
In this work, $\alpha\in(0,\frac{1}{2})$, so we conclude that $\E[C_\alpha^2(\revThird{\omega})]<\infty$.

\renewcommand{\baselinestretch}{0.95}

\bibliographystyle{siamplain}
\bibliography{ASL_papers,main}
\end{document}